\newtheoremstyle{note}
  {3pt}
  {3pt}
  {}
  {}
  {\itshape}
  {:}
  {.5em}
  {}
\newtheorem{lemma}{Lemma}[section]
\newtheorem{proposition}[lemma]{Proposition}
\newtheorem{theorem}[lemma]{Theorem}
\newtheorem{remark}[lemma]{Remark}
\newtheorem*{maintheorem*}{Main Theorem}
\theoremstyle{definition}{\newtheorem{definition}[lemma]{Definition}}
\theoremstyle{note}{
\newtheorem*{claim*}{Claim}}
\numberwithin{equation}{section}
\newcommand{\eps}{\varepsilon}
\newcommand{\sol}{\mathcal{U}}
\newcommand{\sole}{\mathcal{U}_\tau}
\newcommand{\te}{\theta_\eps}
\newcommand{\dom}{\Omega}
\newcommand{\projU}{\mathbb{P}_N}
\newcommand{\projW}{\mathbb{Q}_N}
\newcommand{\bU}{\mathbf{U}}
\newcommand{\bW}{\mathbf{W}}
\newcommand{\bH}{\mathbf{H}}
\newcommand{\bM}{\mathbf{M}}
\newcommand{\bue}{\mathbf{u}_{\tau}}
\newcommand{\bme}{\mathbf{m}_{\tau}}
\newcommand{\bwe}{\mathbf{w}_{\tau}}
\newcommand{\bhe}{\mathbf{h}_{\tau}}
\newcommand{\vfe}{\varphi_{\tau}}
\newcommand{\norm}[1]{\left\| #1 \right\|}
\newcommand{\weakstar}{\overset{\star}\rightharpoonup}
\renewcommand{\i}{\ifmmode\mathit{\mathchar"7010 }\else\char"10 \fi}
\renewcommand{\j}{\ifmmode\mathit{\mathchar"7011 }\else\char"11 \fi}
\newcommand{\R}{\mathbb{R}}
\newcommand{\N}{\mathbb{N}}
\newcommand{\Grad}{\nabla}
\newcommand{\Div}{\operatorname{div}}
\newcommand{\Curl}{\operatorname{curl}}
\newcommand{\hdiv}{L^2_{\Div}(\dom)}
\newcommand{\V}{H^1_{\Div}(\dom)}
\newcommand{\bu}{\mathbf{u}}
\newcommand{\bm}{\mathbf{m}}
\newcommand{\bw}{\mathbf{w}}
\newcommand{\bh}{\mathbf{h}}
\newcommand{\bb}{\mathbf{b}}
\newcommand{\bus}{\mathbf{u}^\sigma}
\newcommand{\bms}{\mathbf{m}^\sigma}
\newcommand{\bws}{\mathbf{w}^\sigma}
\newcommand{\bhs}{\mathbf{h}^\sigma}
\newcommand{\phis}{\varphi^\sigma}
\newcommand{\busn}{\mathbf{u}^{\sigma_n}}
\newcommand{\bmsn}{\mathbf{m}^{\sigma_n}}
\newcommand{\bwsn}{\mathbf{w}^{\sigma_n}}
\newcommand{\bhsn}{\mathbf{h}^{\sigma_n}}
\newcommand{\phisn}{\varphi^{\sigma_n}}
\newcommand{\normal}{\mathbf{n}}
\newcommand{\ajn}{\alpha_{j,N}}
\newcommand{\bjn}{\beta_{j,N}}
\newcommand{\bUN}{\bU_N}
\newcommand{\bWN}{\bW_N}
\newcommand{\lint}{\int_0^T\!\!\!\int_{\dom}\!}
\newcommand{\mi}{m^{(i)}}
\newcommand{\hi}{h^{(i)}}
\newcommand{\bik}{b^{(i)}_K}
\newcommand{\bikd}{b^{(i)}_{K,\delta}}
\begin{document}
\title[Global weak solutions of the Rosensweig system] {On the dynamics of ferrofluids: Global weak solutions to the Rosensweig system and rigorous convergence to equilibrium}	
\author[R.H. Nochetto]{Ricardo H. Nochetto}
\address[R.H. Nochetto]{\newline
	Department of Mathematics \\ University of Maryland \\ College Park, MD 20742, USA.}
\author[K. Trivisa]{Konstantina Trivisa}
\address[K. Trivisa]{\newline
 Department of Mathematics \\ University of Maryland \\ College Park, MD 20742, USA.}
\email[]{\href{trivisa@math.umd.edu}{trivisa@math.umd.edu}}
\urladdr{\href{http://www.math.umd.edu/~trivisa}{math.umd.edu/\~{}trivisa}}

\author[F. Weber]{Franziska Weber}
\address[F. Weber]{\newline
	Department of Mathematical Sciences \\ Carnegie Mellon University \\ Pittsburgh, PA 15213, USA}
\email[]{\href{franzisw@andrew.cmu.edu}{franzisw@andrew.cmu.edu}}

\date{\today}

\subjclass[2010]{Primary: 35Q92, 76N10; Secondary: 76W99.}

\keywords{Ferrofluids, hydrodynamic limit, relaxation to equilibrium, relative entropy, magnetohydrodynamics.}

\begin{abstract}
This article establishes the global existence of weak solutions to a model proposed by  Rosensweig \cite{Rosensweig1985} for the dynamics of ferrofluids. The system is expressed by the conservation of linear momentum, the incompressibility condition, the conservation of angular momentum, and the evolution of the magnetization.  
The existence proof is inspired by the DiPerna-Lions theory of renormalized solutions.
In addition,  the  rigorous relaxation limit of the equations of ferrohydrodynamics towards the quasi-equilibrium is investigated. The proof relies on the relative entropy method, which involves constructing  a suitable functional, analyzing  its time evolution and obtaining convergence results for the sequence of approximating solutions. 
\end{abstract}

\maketitle

\section{Introduction}\label{S1}
\subsection{Motivation}
Ferrofluids are stable colloidal dispersions of nano-sized particles of ferro- or ferrimagnetic particles in a carrier liquid. A crucial property of ferrofluids is that they can be actuated by magnetic fields, upon changing the position and strength of magnets, and be forced to flow to precise locations or display a specific patterns. These complex fluids have an array of engineering and biomedical applications. Ferrofluids have the capability of reducing friction, making them useful in a variety of electronic and transportation applications. They can also be used as a liquid seal in many electronic devices, for instance in computer hard-drives where they can be utilized to form a seal around the rotating shaft~\cite{Raj1995,Scherer2005} or in loudspeakers for cooling and damping unwanted resonances~\cite{Miwa2003}. On the other hand, ferrofluids have been instrumental in transporting medications to exact locations within the human body (drug delivery), they have been of use as contrasting agents for Magnetic Resonance Imaging (MRI) scans. More recently, ferrofluids have been of use in on-going research investigations aiming at  the creation of  an artificial heart; by surrounding the heart with magnets, the ferrofluid fixed to frame of the heart will expand and contract when needed, imitating the pumping of the real organ. We refer the reader to  \cite{Pankhurst2003} and \cite{Wang2008} for an overview of relevant biomedical applications. 

Although our understanding of the dynamics of ferrofluids has evolved in recent years, many aspects of ferrohydrodynamics remain largely unexplored, especially experimentally. 
This article is part of a research program which aims at enhancing our understanding of the properties and dynamics of ferrofluids through the analysis of models that are relevant to practical applications.

\subsection{Governing Equations}
The goal of this work is the rigorous investigation of the solvability of the equations proposed by Rosensweig~\cite{Rosensweig1985} that describe
the flow of an incompressible ferrofluid subjected to an external magnetic field. 
In this model, the dynamics of the linear velocity $\bu$, the pressure $p$, the angular momentum $\bw$ and the magnetization $\bm$  on a bounded simply connected domain $\dom\subset\R^d$, $d=2,3$, are governed by the conservation of linear momentum, the incompressibility condition, the conservation of angular momentum, the transport of the magnetization and the magnetostatic equation for the magnetic field $\bh$ as follows (cf. \cite{Rosensweig1985, Rosensweig1985a, Odenbach2002}):
\begin{subequations}
	\label{eq:rosenzweig}
	\begin{align}
	\bu_t +(\bu\cdot\Grad) \bu- (\nu+\nu_r)\Delta \bu+\Grad p &= 2 \nu_r \Curl \bw +\mu_0 (\bm\cdot\Grad)\bh,\label{seq:dtu}\\
	\Div \bu &=0\label{seq:divu},\\
	\bw_t + (\bu\cdot \Grad)\bw - c_1\Delta \bw -c_2 \Grad\Div\bw +4 \nu_r \bw &= 2\nu_r \Curl \bu+\mu_0 \bm\times \bh,\label{seq:dtw}\\
	\bm_t +(\bu\cdot \Grad)\bm &=\bw\times \bm-\frac{1}{\tau}(\bm-\kappa_0 \bh),\label{seq:dtm}\\
	\Curl \bh &=0,\label{seq:curlh}\\
	\Div (\bh + \bm)&=0\label{seq:divb},
	\end{align}
\end{subequations}
with suitable boundary conditions discussed later.
The forcing term $${\bf F}= \mu_0 (\bm\cdot\Grad)\bh$$ in the linear momentum equation is the so-called Kelvin force. The term $\mu_0 \bm$ represents the vector moment per unit volume.
The {\em effective magnetizing field} $\bh$  is given by
\begin{equation} 
{\bh} = \bh_{a} + \bh_d, \label{effmf}
\end{equation}
where $\bh_a$ is the so-called {\em applied magnetic field} and $\bh_d$ is the {\em demagnetizing field}. The former is assumed to be smooth and both rotation and divergence free in $\mathbb{R}^3$. The latter is rotation free in $\mathbb{R}^3$. Equation \eqref{seq:divb} is the Maxwell equation $\Div \bb = 0$ in $\mathbb{R}^3$ for the magnetic induction given by $\bb=\mu_0(\bh+\bm)$ in $\Omega$ and $\bb=\mu_0\bh$ outside $\Omega$ where the magnetization $\bm$ vanishes. Invoking the customary, although somewhat arbitrary, assumption that $\bh_d = 0$ outside $\Omega$ along with the continuity of the normal components of $\bb$ and $\bh_a$ across the boundary $\Gamma$ of $\Omega$, we deduce
\begin{equation}\label{normal-hd}
\bh_d\cdot\normal = - \bm\cdot\normal\quad\textrm{on }\Gamma,
\end{equation}
where $\normal$ is the unit outer normal on $\Gamma$.
Whenever the magnetization $\bm$ is small relative to $\bh_a$, the demagnetizing field $\bh_d$ is negligible and the effective magnetic field satisfies $\bh \approx \bh_a.$ If $\bh$ is a given field rather than the solution of the magnetostatics equations~\eqref{seq:curlh}--\eqref{seq:divb}), then the analysis of the reduced system~\eqref{seq:dtu}--\eqref{seq:dtm} simplifies considerably. However, recent numerical simulations for a related two-phase flow model~\cite{Nochetto2016b} indicate that the reduced system may not be able to capture the whole physical behavior of ferrofluids. The famous Rosensweig instability, for example, can only be reproduced when $\bh_d$ is present or equivalently when $\bh$ solves the magnetostatics equation~\eqref{seq:curlh}--\eqref{seq:divb} (see~\cite[Figures 6 and 7]{Nochetto2016b}). We will therefore focus here on the analysis of the full system~\eqref{eq:rosenzweig}. 

The model~\eqref{eq:rosenzweig} is derived under the following physically grounded, yet restrictive, hypotheses:
\begin{itemize}
	\item [(A1)] The ferromagnetic particles are spherical.
	\item [(A2)] The ferrofluid is a monodisperse mixture, in the sense  that the ferromagnetic particles are of the same mass/size.
	\item [(A3)] The density of ferromagnetic particles (number of particles per unit volume) in the carrier liquid is considered to be homogeneous.
	\item [(A4)] No agglomeration, clumping, anisotropic behavior (e.g. formation of chains), nor particle-to-particle interactions are considered.
	\item [(A5)] The induced fields ($\bm$ and $\bh_d$) are unable to perturb the applied magnetic field $\bh_a.$
\end{itemize}
Even though these assumptions might restrict the applicability of the Rosensweig model, there is a large class of physical situations in which they apply (cf. \cite{Rosensweig1985}). 
The derivation follows the strategy that is common in the theory of thermodynamics, namely we start stating fundamental principles such as the conservation of linear and angular momentum, the conservation of mass and the evolution of the magnetization   in the presence of stress tensors and other quantities  which satisfy rather general constitutive laws. Next, one  proceeds by writing the Clausius-Duhem inequality and with the  calculation the entropy production rate, which according to the second law of thermodynamics is assumed to be a nonnegative measure. This requirement  imposes additional restrictions on various quantities (tensors, forces  and parameters) in the system, which result to the form of the nonlinear system stated above. In particular, the material constants $\nu,\nu_r, \mu_0, j, c_{\alpha}, c_d, c_0, \kappa_0$ are assumed to be  nonnegative and are chosen in such a way so that  the Clausius-Duhem inequality is satisfied (cf. \cite{Rosensweig1985, Odenbach2002,Lukaszewicz1999}). 

We refer the reader to Rinaldi and Zahn~\cite{RinaldiZahn2002}, Sunil, Chand and Bharti~\cite{Sunil2007}, Zahn and Greer~\cite{Zahn1995} for further remarks.

\medskip\noindent
{\it Initial and Boundary Conditions.}
We assume that the initial data is such that
\begin{equation}\label{eq:init}
\bu(0,x)=\bu_0(x)\in \hdiv, \quad \bw(0,x)=\bw_0(x)\in L^2 (\dom),\quad \bm(0,x)=\bm_0(x)\in L^2(\dom). 
\end{equation}
The external applied field $\bh_a$ is assumed to be smooth in space and time and divergence free.
Moreover, we use the boundary conditions on $\Gamma:=\partial\dom$
\begin{equation}
\label{eq:bc1}
\left.\bu(t,\cdot)\right|_\Gamma= 0,\quad \left.\bw(t,\cdot)\right|_\Gamma = 0,\quad t\in [0,T]. 
\end{equation}
Under the assumption that the domain $\dom$ is simply-connected, we can write $\bh$ as the gradient of a potential $\bh=\Grad\varphi$, by \eqref{seq:curlh}, whence \eqref{seq:divb} becomes an elliptic equation for the potential $\varphi$
\begin{equation}
\label{eq:potential1}
-\Delta\varphi = \Div \bm.
\end{equation}
In view of \eqref{effmf} and \eqref{normal-hd}, we infer $\bh\cdot\normal=(\bh_a-\bm)\cdot\normal$ and a Neumann boundary condition for $\varphi$
\begin{equation}\label{eq:potentialbc}
\frac{\partial\varphi}{\partial n}=(\bh_a-\bm)\cdot\normal\quad \textrm{on }[0,T]\times\Gamma\quad\text{ with } \int_{\dom}\varphi(t,x) dx = 0.
\end{equation}
We refer to~\cite[Section 2.2]{Nochetto2016} for a physically motivated discussion of boundary conditions. We observe that $\bm$ dictates $\bh_d$. Since $\Curl \bh_d=0$ there exists a scalar potential $\zeta$ such that $\nabla\zeta=\bh_d$ in $\Omega$. Invoking $\Div(\bh+\bm)=\Div(\bh_d+\bm)=0$ along with \eqref{normal-hd}, we deduce
\[
-\Delta\zeta = \Div\bm \quad\textrm{in }\Omega,
\qquad
\frac{\partial \zeta}{\partial n} = - \bm\cdot\normal \quad\textrm{on }\Gamma.
\]
This determines a unique function $\zeta$ with vanishing mean-value, hence the crucial field $\bh_d$.

Existence of global weak solutions to the system \eqref{seq:dtu}-\eqref{seq:divb} in the presence of additional diffusion $\sigma \Delta \bm$, $\sigma>0$, in \eqref{seq:curlh}  has been shown by Amirat et al.~\cite{Amirat2008}.
Results  on the  local in  time existence of strong solutions has been shown~\cite{Amirat2010}.  To the best of our knowledge our article is the first that establishes the global existence of weak solutions in the absence of additional diffusion in the magnetization equation. In particular, we prove the following existence result,
\begin{theorem}[Global Existence]
	Assume the initial data $(\bu_0,\bw_0,\bm_0)$ satisfies~\eqref{eq:init}, the effective magnetizing field $\bh$ is given by~\eqref{effmf} and the applied magnetizing field $\bh_a$ is smooth and divergence free. 
	Then the problem \eqref{seq:dtu}--\eqref{seq:divb} with boundary conditions \eqref{eq:bc1} and~\eqref{eq:potentialbc} has a global weak solution $\sol:=(\bu,\bw,\bm,\bh)$, as in Definition~\ref{def:weaksol}, satisfying the energy inequality
	\begin{equation}\label{eq:energy0}
	\int_{\dom} E(\sol)(t) dx +\int_0^t\int_{\dom} D(\sol)(s)\,dx ds\leq \int_{\dom} E(\sol)(0) dx +\mu_0\int_0^t\int_{\dom}\partial_t\bh_a\cdot \bh\, dx ds,
	\end{equation}
	where the energy $E$ is defined by
	\begin{equation}\label{eq:defE}
	E(\sol) = \frac{1}{2}\left(|\bu|^2 +|\bw|^2+ \frac{\mu_0}{\kappa_0}|\bm|^2+\mu_0|\bh|^2\right),
	\end{equation}
	and the dissipation functional $D$ is defined by
	\begin{equation}\label{eq:defD}
	D(\sol) = \left(\nu |\Grad \bu|^2 + c_1 |\Grad\bw|^2 + c_2|\Div \bw|^2+\nu_r|\Curl \bu -2\bw|^2+\frac{\mu_0}{\tau \kappa_0}|\bm-\kappa_0\bh|^2 \right).
	\end{equation}
\end{theorem}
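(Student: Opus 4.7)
The plan is to construct weak solutions by a Galerkin-type approximation, derive uniform bounds from an energy identity mirroring \eqref{eq:energy0}, and pass to the limit. The principal difficulty is the absence of dissipation in the magnetization equation \eqref{seq:dtm}: a priori bounds give only $\bm\in L^\infty_t L^2_x$ with no spatial regularity, which rules out standard Aubin--Lions compactness for $\bm$ and forces a DiPerna--Lions renormalization argument in order to upgrade its convergence to strong $L^2$.

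I would first regularize the magnetization equation by a vanishing viscosity $\sigma\Delta\bm$ (a setting for which Amirat et al.\ supply existence at each fixed $\sigma>0$) and simultaneously build Galerkin approximations $(\bu_N,\bw_N)$ from Stokes and Lam\'e eigenfunctions; the magnetostatic field $\bh_N$ is then defined from $\bm_N$ through the elliptic problem \eqref{eq:potential1}--\eqref{eq:potentialbc}, yielding the linear control $\|\bh_N\|_{L^2}\lesssim \|\bm_N\|_{L^2}+\|\bh_a\|_{L^2}$. Testing the approximate versions of \eqref{seq:dtu}, \eqref{seq:dtw}, \eqref{seq:dtm} against $\bu_N$, $\bw_N$, and $\frac{\mu_0}{\kappa_0}\bm_N$, and exploiting $\Div\bu_N=0$, the skew-symmetry of $\bw\times\bm$, and the magnetostatic structure $\Curl\bh=0$, $\Div(\bh+\bm)=0$, the cross-coupling terms reorganize into the dissipative form of \eqref{eq:defD}: the angular drag produces $\nu_r|\Curl\bu-2\bw|^2$, the relaxation produces $\frac{\mu_0}{\tau\kappa_0}|\bm-\kappa_0\bh|^2$, and the Kelvin and torque couplings combine with the $L^2$ evolution of $\bh$ (via time-differentiation of \eqref{eq:potential1}--\eqref{eq:potentialbc}) to leave the forcing $\mu_0\int\partial_t\bh_a\cdot\bh$ on the right-hand side. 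This delivers a discrete version of \eqref{eq:energy0}, from which global existence of the approximations and the uniform bounds $\bu_N,\bw_N\in L^\infty_t L^2_x\cap L^2_t H^1_x$, $\bm_N,\bh_N\in L^\infty_t L^2_x$, and $\bm_N-\kappa_0\bh_N\in L^2_{t,x}$ follow.

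Passing to the limit $N\to\infty$ and $\sigma\to 0$, Aubin--Lions (combined with the bounds on $\pt\bu_N$, $\pt\bw_N$ obtained by duality from \eqref{seq:dtu}, \eqref{seq:dtw}) provides strong $L^2_{t,x}$ compactness for $\bu_N,\bw_N$, while $\bh_N$ inherits convergence properties from $\bm_N$ through the continuous linear elliptic map. The hard part is strong convergence of $\bm_N$. I invoke the DiPerna--Lions framework: since the limit $\bu\in L^2_t H^1_x$ is divergence free, the commutator lemma applies to the transport-reaction equation \eqref{seq:dtm} so that any weak-$L^2$ limit $\bm$ is a renormalized solution. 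Choosing $\beta(\bm)=\tfrac{1}{2}|\bm|^2$ and exploiting $(\bw\times\bm)\cdot\bm=0$ yields
\begin{equation*}
\frac{d}{dt}\int_\dom \tfrac{1}{2}|\bm|^2\,\dx = -\frac{1}{\tau}\int_\dom |\bm|^2\,\dx + \frac{\kappa_0}{\tau}\int_\dom \bm\cdot\bh\,\dx
\end{equation*}
for the limit, to be compared with the distributional identity satisfied by the weak limit of $|\bm_N|^2$. A Gronwall argument then forces $\|\bm_N(t)\|_{L^2}\to \|\bm(t)\|_{L^2}$, and hence strong convergence of $\bm_N$ in $L^2_{t,x}$ (this is the step that most depends on the DiPerna--Lions machinery, and where one must verify that no defect measure survives when the viscous and Galerkin parameters are removed simultaneously). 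With this in hand, every nonlinearity in \eqref{seq:dtu}--\eqref{seq:divb}, including the Kelvin force $(\bm\cdot\Grad)\bh$ and the torque $\bw\times\bm$, passes to the limit, while \eqref{eq:energy0} is preserved by weak lower semicontinuity of the convex dissipation functionals.
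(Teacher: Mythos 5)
Your overall scaffolding matches the paper's: vanishing viscosity $\sigma\Delta\bm$ (invoking Amirat et al.\ for existence at each $\sigma>0$), energy estimate mirroring \eqref{eq:energy0}, Aubin--Lions compactness for $\bu$, $\bw$, and a DiPerna--Lions renormalization argument to handle $\bm$. However, there is a genuine gap at exactly the step you flag as "the step that most depends on the DiPerna--Lions machinery."

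Your claim that "a Gronwall argument then forces $\|\bm_N(t)\|_{L^2}\to\|\bm(t)\|_{L^2}$" does not close. Write $f(t):=\frac12\int_\dom(\overline{|\bm|^2}-|\bm|^2)\,\dx\ge 0$ (by weak lower semicontinuity). Subtracting the renormalized identity for the limit from the weak-limit of the regularized energy balance for $\bm_n$ gives
\begin{equation*}
f(t)\le -\frac{2}{\tau}\int_0^t f(s)\,ds+\frac{\kappa_0}{\tau}\int_0^t\!\!\int_\dom\left(\overline{\bh\cdot\bm}-\bh\cdot\bm\right)\dx\,ds,
\end{equation*}
and Gronwall is powerless: the last term is a weak-limit defect for the \emph{cross product} $\bh_n\cdot\bm_n$, which a priori carries no sign and is not controlled by $f$. (Indeed, both $\bh_n$ and $\bm_n$ converge only weakly in $L^2_{t,x}$, so the product can oscillate.) The essential missing ingredient is the magnetostatic identity: testing $-\Delta\varphi_n=\Div\bm_n$ and its weak limit against $\varphi_n$ and $\varphi$ respectively yields
\begin{equation*}
\int_\dom\left(|\bh|^2+\bm\cdot\bh\right)\dx=\int_\dom\left(\overline{|\bh|^2}+\overline{\bm\cdot\bh}\right)\dx,
\end{equation*}
i.e.\ the cross-term defect equals \emph{minus} the defect in $|\bh|^2$, which is nonnegative by weak lower semicontinuity. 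Substituting this relation makes the right-hand side of the inequality above nonpositive, whence $f\equiv 0$ by a pure sign argument (no Gronwall needed). Without this elliptic-structure argument linking $\bm$ and $\bh$, strong $L^2$ convergence of $\bm_n$ is not recoverable, and neither the Kelvin force $(\bm\cdot\Grad)\bh$ nor the torque $\bm\times\bh$ passes to the limit.

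Two smaller remarks. First, taking $\beta(\bm)=\frac12|\bm|^2$ directly is not admissible in the commutator lemma because its derivative is unbounded; the paper (Lemma~\ref{lem:transport2}) truncates $\beta$ to $\beta_K$ with gradient parallel to $\bm$, so the torque term $(\bw\times\bm)\cdot\beta_K'(\bm)$ still cancels, and then passes $K\to\infty$. Second, the Kelvin force itself cannot be made sense of when $\bm,\bh\in L^2$ only; the paper uses the rewriting $(\bm\cdot\Grad)\bh=\Div((\bm+\bh)\otimes\bh)-\frac12\Grad|\bh|^2$ (Lemma~\ref{lem:integrableweaksol}) to define the weak formulation, which should be spelled out before "every nonlinearity ... passes to the limit."
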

\begin{remark}
	Since by Young's inequality
	\begin{equation*}
	\left|\mu_0\int_0^t\int_{\dom}\partial_t\bh_a\cdot \bh\, dx ds\right|\leq \frac{\mu_0}{2}\int_0^t\int_{\dom}|\partial_t\bh_a|^2\, dx ds+\frac{\mu_0}{2}\int_0^t\int_{\dom}|\bh|^2\, dx ds,
	\end{equation*}
	and $\bh_a$ is smooth, we combine the energy inequality~\eqref{eq:energy0} with Gr\"{o}nwall's inequality to obtain that
	\begin{equation*}
		\bu,\bw\in L^\infty(0,T;L^2(\dom))\cap L^2(0,T;H^1(\dom)),\quad 
	\bm, \bh \in L^\infty(0,T;L^2(\dom)),
	\end{equation*}
	and hence the term $\mu_0\int_0^t\int_{\dom}\partial_t\bh_a\cdot \bh\, dx ds$ on the right hand side of the energy inequality~\eqref{eq:energy0} is bounded.
\end{remark}
\begin{remark}
	The technique used to prove this result can be extended in a straightorward fashion to prove existence of global weak solutions of the diffusive interface model for two-phase ferrofluid flows that was introduced in~\cite{Nochetto2016b}.
\end{remark}

This theorem is established by constructing  a suitable sequence of approximating solutions inspired by the result of Amirat et al.~\cite{Amirat2008}. 
The lack of sufficient regularity in the sequence of approximate solutions, as obtained employing the energy estimate, presents a big challenge in the analysis. 
 More specifically, the main energy bound  only yields $\bm\in L^\infty(0,T;L^2(\dom))$ uniformly for the approximating sequence of solutions. The reader should contrast this space setting  to the result in~\cite{Amirat2008}, where the additional artificial dissipation term in the magnetization equation yields $H^1$-regularity in space. 

Another major difficulty arises due to the presence of the a priori unbounded Kelvin force term $\mu_0(\bm\cdot\Grad)\bh$  in the momentum equation. This challenge is addressed by deriving a new identity (see Lemma~\ref{lem:integrableweaksol}) that allows us to define the Kelvin force term in a distributional sense, and  the definition of weak solutions of~\eqref{eq:rosenzweig} is modified  accordingly in light of the new formulation. 

In addition, passing to the limit in the approximations requires us to establish compactness in $L^2$ for  the approximating sequence of the magnetization $\bm.$ This is achieved by proving that the magnetization is `renormalized' in the spirit of DiPerna-Lions theory for compressible fluids~\cite{Lions1996,Lions1998book}. Combining this new feature of the equation  with an additional identity obtained  for the weak limits of the magnetization and the magnetic field from the magnetostatics equation assists in establishing the result.

The long term goal of this research effort is the construction of convergent numerical schemes for the approximation of this nonlinear system.

\subsection{Relaxation time and quasi-equilibrium}
In practical applications, the parameter $\tau>0$ in equation~\eqref{seq:dtm}, the relaxation time, is often very small.
According to~\cite{Shliomis2002}, it is given by
\begin{equation*}
\frac{1}{\tau}=\frac{1}{\tau_B}+\frac{1}{\tau_N}, \quad \text{or}\quad \tau =\frac{\tau_B\tau_N}{\tau_N+\tau_B},
\end{equation*}
where $\tau_B$ is the Brownian time and $\tau_N$ is the N\'{e}el time. In Brownian relaxation, the magnetization vector $\bm$ rotates synchronically with the particle, while in the N\'{e}el relaxation mechanism, the magnetization vector $\bm$ rotates inside the particle and the particle itself does not rotate. Depending on the particle size, one or the other mechanism dominates. For particles with a smaller diameter than the so-called Shliomis' diameter $d_S$, the N\'{e}el time satisfies $\tau_N\ll \tau_B$ and hence $\tau\approx \tau_N$. For bigger particles, $\tau_B\ll\tau_N$ and hence $\tau\approx\tau_B$. The N\'{e}el time can be very small: For example for particles with diameter $d\approx (0.3 - 0.5) d_S$ it is of order $\tau\approx\tau_N\approx 10^{-9}s$ and hence the magnetization vector $\bm$ becomes parallel to the magnetic field vector $\bh$ almost immediately. For the opposite case, $\tau\approx \tau_B\approx 10^{-5} - 10^{-4} s$ and hence the magnetization vector does not need to be parallel to $\bh$~\cite{Shliomis2002}. Rinaldi~\cite[page 54]{Rinaldi2002} states that the relaxation time may be of the order $\tau\approx 10^{-7} - 10^{-5}s$.
One may therefore assume that it is of interest to investigate the behavior solutions of~\eqref{eq:rosenzweig} as $\tau\to 0$. Formally, setting $\tau=0$ in~\eqref{eq:rosenzweig}, we obtain the following system:
\begin{subequations}
	\label{eq:limitsys}
	\begin{align}
	\bU_t +(\bU\cdot\Grad) \bU- (\nu+\nu_r)\Delta \bU+\Grad P &= 2 \nu_r \Curl \bW +\mu_0 (\bM\cdot\Grad)\bH,\label{seq:dtuL}\\
	\Div \bU &=0\label{seq:divuL},\\
	\bW_t + (\bU\cdot \Grad)\bW - c_1\Delta \bW -c_2 \Grad\Div\bW +4 \nu_r \bW &= 2\nu_r \Curl \bU,\label{seq:dtwL}\\
	\bM &= \kappa_0\bH,\label{seq:ML}\\
	\Curl \bH &=0,\label{seq:curlhL}\\
	\Div (\bH + \bM)&= 0\label{seq:divbL},
	\end{align}
\end{subequations}
with boundary conditions 
\begin{equation*}
\bU=0,\quad \bW=0,\quad \bH\cdot\normal = \frac{1}{1+\kappa_0}\bh_a\cdot \bh,\quad \text{on } [0,T]\times\partial\dom.
\end{equation*}
The second objective of this article is to establish with rigorous arguments that under suitable assumptions on the initial data, we have that in the relaxation limit $ \tau\to 0$, 
$$\bm\rightarrow \kappa_0 \bH$$ and the sequence of solutions $(\bue,\bwe,\bme,\bhe)$ converges to a solution of~\eqref{eq:limitsys} when $\tau\to 0$. 
In fact, the following can be shown rigorously:
\begin{theorem}
	\label{thm:convequilibriumweaksol}
	Denote $\sole:=(\bue,\bwe,\bme,\bhe)$, $\bhe=\Grad\vfe$ a weak solution of system~\eqref{eq:rosenzweig} for a given $\tau>0$. Then as $\tau\to 0$, a subsequence of $\{\sole\}_{\tau>0}$ converges in $L^2([0,T]\times\dom)$ to a weak solution of~\eqref{eq:limitsys}.  
\end{theorem}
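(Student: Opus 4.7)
The strategy is to combine $\tau$-uniform estimates from the energy inequality~\eqref{eq:energy0} of the previous theorem with a relative-entropy/modulated-energy argument that exploits the dissipative structure of the relaxation term. The energy inequality, together with the smoothness of $\bh_a$ and Gr\"onwall's inequality, yields $\tau$-uniformly $\bue,\bwe\in L^\infty(0,T;L^2(\dom))\cap L^2(0,T;H^1_0(\dom))$ and $\bme,\bhe\in L^\infty(0,T;L^2(\dom))$. The crucial consequence of the presence of $\frac{\mu_0}{\tau\kappa_0}|\bm - \kappa_0\bh|^2$ in the dissipation $D(\sol)$ is the bound
$$\int_0^T\!\!\int_\dom |\bme - \kappa_0\bhe|^2 \, dx\,dt \le C\tau.$$
Extracting weak-$\star$ limits (not relabeled) $\bU,\bW,\bM,\bH$ in the natural spaces, this estimate forces the algebraic constraint $\bM = \kappa_0\bH$. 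Since the magnetostatic subsystem~\eqref{seq:curlh}--\eqref{seq:divb} and its boundary condition~\eqref{eq:potentialbc} are linear in $(\bme,\bhe)$, passing to the weak limit directly recovers~\eqref{seq:curlhL}--\eqref{seq:divbL}.

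Next, to treat the convective and forcing terms in~\eqref{seq:dtu} and~\eqref{seq:dtw}, I would apply the Aubin--Lions lemma to promote $\bue,\bwe$ to strong $L^2([0,T]\times\dom)$ convergence. Time-derivative estimates in a negative Sobolev space (such as $L^{4/3}(0,T;H^{-1}(\dom))$) follow directly from the equations themselves, the only subtlety being the Kelvin force, which is bounded in a distributional sense via the reformulation of Lemma~\ref{lem:integrableweaksol}. With strong convergence of $\bue,\bwe$, the convective nonlinearities and the cross-coupling $\Curl\bue,\Curl\bwe$ terms pass to the limit. The magnetic torque $\mu_0\bme\times\bhe$ in~\eqref{seq:dtw}, which is absent from~\eqref{seq:dtwL}, vanishes in the limit since $\bme\times\bhe = (\bme - \kappa_0\bhe)\times\bhe$ and $\bme - \kappa_0\bhe\to 0$ strongly in $L^2_{t,x}$ while $\bhe$ is bounded in $L^\infty_t L^2_x$.

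For the remaining magnetic quantities I would introduce the modulated energy
$$\mathcal{R}_\tau(t) = \frac{1}{2}\int_\dom\left(|\bue - \bU|^2 + |\bwe - \bW|^2 + \frac{\mu_0}{\kappa_0}|\bme - \kappa_0\bH|^2 + \mu_0|\bhe - \bH|^2\right)dx,$$
test the $\sole$-system against the limit fields, and subtract the analogous identities for $\sol_0 = (\bU,\bW,\bM,\bH)$, to derive a Gr\"onwall-type inequality of the form $\frac{d}{dt}\mathcal{R}_\tau \le C\,\mathcal{R}_\tau + r_\tau$ with $r_\tau\to 0$. The elliptic link $-\Delta(\vfe-\varphi)=\Div(\bme-\bM)$ with Neumann data $-(\bme - \bM)\cdot\normal$ yields the crucial control $\|\bhe-\bH\|_{L^2}\le\|\bme-\bM\|_{L^2}$, ensuring that $\mathcal{R}_\tau$ is equivalent to $\|\sole-\sol_0\|_{L^2}^2$ and that the ``magnetic'' cross-terms produced in the computation close on themselves. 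Using that $\mathcal{R}_\tau(0)\to 0$ (by choosing the initial data for the limit system as strong $L^2$ limits of $\sole|_{t=0}$), Gr\"onwall delivers $\mathcal{R}_\tau\to 0$ uniformly on $[0,T]$, hence strong $L^2([0,T]\times\dom)$ convergence of $\sole$ to a weak solution of~\eqref{eq:limitsys}, and one may then also pass to the limit in the Kelvin force $(\bme\cdot\Grad)\bhe\to(\bM\cdot\Grad)\bH$ in the distributional sense of Lemma~\ref{lem:integrableweaksol}.

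The main obstacle is the careful handling of the singular $\tau^{-1}$ relaxation contribution in the relative-entropy balance. Writing $\bme - \kappa_0\bhe = (\bme - \kappa_0\bH) - \kappa_0(\bhe - \bH)$ produces a negative diagonal term $-\frac{\mu_0}{\tau\kappa_0}|\bme-\kappa_0\bhe|^2$ together with two cross-terms of apparent size $\tau^{-1}$. These must be shown to recombine, with the help of the linear magnetostatic map $\bme\mapsto\bhe$ and the constraint $\bM=\kappa_0\bH$, into a net nonpositive quantity modulo terms controlled by $\mathcal{R}_\tau$ itself. Getting these signs right, and verifying the compatibility of the limit initial data with the constraint, is the technical heart of the proof.
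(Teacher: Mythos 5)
Your uniform estimates, the identification $\bM=\kappa_0\bH$ from $\tau^{-1/2}\|\bme-\kappa_0\bhe\|_{L^2}\le C$, the Aubin--Lions argument for $\bue,\bwe$, and the observation that the torque $\mu_0\,\bme\times\bhe=\mu_0\,(\bme-\kappa_0\bhe)\times\bhe$ vanishes in the limit all match the paper's proof. But the step where you obtain strong $L^2$ convergence of $\bhe,\bme$ — needed to pass to the limit in the Kelvin force — through a modulated-energy/Gr\"onwall argument has a genuine gap: that argument requires the limit fields $\bU,\bW,\bH$ to have Lipschitz/$H^1$ regularity in order to close the estimates on cross-terms such as $\int (\bue-\bU)\cdot[(\bue-\bU)\cdot\Grad]\bU\,dx$. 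In Theorem~\ref{thm:convequilibriumweaksol} the limit is only a weak solution, with no such regularity available (and, in 3D, none expected). This is precisely why the paper reserves the relative-entropy argument for Theorem~\ref{thm:convergencerate}, which carries those regularity hypotheses explicitly; you have effectively proposed the proof of Theorem~\ref{thm:convergencerate} and tried to apply it to Theorem~\ref{thm:convequilibriumweaksol}.

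The paper obtains strong convergence by a different device that works at the level of bare weak solutions: it tests the magnetostatic equation $\Div(\bhe+\bme)=\Div\bh_a$ with $\vfe$ and, separately, the limit equation with $\Phi$, then passes to the limit and compares. Using $\bM=\kappa_0\bH$ and the $O(\sqrt\tau)$ smallness of $\bme-\kappa_0\bhe$ one deduces $\lim_{\tau\to0}\|\bhe\|_{L^2}^2 = \|\bH\|_{L^2}^2$, which together with weak convergence upgrades $\bhe\rightharpoonup\bH$ to strong $L^2$ convergence, and then $\bme\to\kappa_0\bH$ strongly follows from the relaxation bound. This is a compensated-compactness style argument exploiting the linear elliptic structure of the magnetostatic map, and it requires no regularity of the limit beyond $L^2$. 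A minor secondary remark: the time-derivative bound you quote, $\partial_t\bue\in L^{4/3}(0,T;H^{-1}(\dom))$, is too optimistic given that the reformulated Kelvin force involves products of $L^2$ fields; the paper settles for $L^2(0,T;H^{-s}_{\mathrm{div}}(\dom))$ with $s$ large, which still suffices for Aubin--Lions.
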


When the solution of the limiting system~\eqref{eq:limitsys} is smooth, which is the case for short times and smooth data and in two space dimensions for smooth enough data (this is shown in Appendix~\ref{S8}), we show a convergence rate in $\tau$ for the approximate solutions of~\eqref{eq:rosenzweig}:
\begin{theorem}
	\label{thm:convergencerate}
	If the solution of~\eqref{eq:limitsys} satisfies $\bU,\bW\in L^\infty(0,T;\mathrm{Lip}(\dom))$, $\bH=\Grad\Phi\in L^2(0,T;H^1(\dom))$, $\partial_t\bH\in L^2([0,T]\times\dom)$; and the initial data for~\eqref{eq:rosenzweig} and~\eqref{eq:limitsys} satisfy
	\begin{equation*}
	\norm{\bu_0-\bU_0}_{L^2(\dom)}^2 +\norm{\bw_0-\bW_0}_{L^2(\dom)}^2+\norm{\bm_0-\bM_0}_{L^2(\dom)}^2 \leq C\tau,
	\end{equation*}	
	then the solutions $\sole$ of~\eqref{eq:rosenzweig} converge as $\tau\to 0$ to the solution of the limiting system~\eqref{eq:limitsys} at the rate:
	\begin{multline*}
	\norm{\bue-\bU}_{L^2(\dom)}(t)+\norm{\bwe-\bW}_{L^2(\dom)}(t)+\norm{\bme-\bM}_{L^2(\dom)}(t)\\
	+\norm{\bhe-\bH}_{L^2(\dom)}(t)+\norm{\Grad(\bue-\bU)}_{L^2([0,t]\times\dom)}+\norm{\Grad(\bwe-\bW)}_{L^2([0,t]\times\dom)}\leq C\sqrt{\tau}(1+\exp(Ct)).
	\end{multline*}
	
\end{theorem}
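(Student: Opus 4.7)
My approach is a relative entropy (modulated energy) argument. Define
\begin{equation*}
\mathcal{R}(t) := \frac{1}{2}\int_\dom\!\bigl(|\bue-\bU|^2 + |\bwe-\bW|^2 + \tfrac{\mu_0}{\kappa_0}|\bme-\bM|^2 + \mu_0|\bhe-\bH|^2\bigr)(t,x)\, dx,
\end{equation*}
modelled on the energy \eqref{eq:defE}. The magnetostatic coupling $-\Delta(\vfe-\Phi)=\Div(\bme-\bM)$ with compatible Neumann data yields $\|\bhe-\bH\|_{L^2}\le C\|\bme-\bM\|_{L^2}$, so the hypothesis on the initial data gives $\mathcal{R}(0)\le C\tau$. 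The goal is to prove
\begin{equation*}
\mathcal{R}(t) + \int_0^t\!\bigl(\nu\|\nabla(\bue-\bU)\|^2 + c_1\|\nabla(\bwe-\bW)\|^2\bigr)\,ds \le C\tau\,(1+e^{Ct}),
\end{equation*}
from which Theorem~\ref{thm:convergencerate} follows.

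The inequality is derived by subtracting the energy inequality \eqref{eq:energy0} from its analogue for the smooth limit $\sol$ and compensating via cross-tests: the weak formulation of \eqref{eq:rosenzweig} against $(\bU,\bW,\tfrac{\mu_0}{\kappa_0}\bM)$, and the strong formulation of \eqref{eq:limitsys} against $(\bue,\bwe,\tfrac{\mu_0}{\kappa_0}\bme)$. For the hydrodynamic variables the estimates are of the textbook type: convective terms produce $\|\nabla\bU\|_{L^\infty}\mathcal{R}$-bounds after using $\Div\bue=\Div\bU=0$; the curl coupling between \eqref{seq:dtu} and \eqref{seq:dtw} pairs into the coercive dissipation $\nu_r\|\Curl(\bue-\bU)-2(\bwe-\bW)\|^2$; and the Kelvin force difference is split as $\mu_0((\bme-\bM)\cdot\nabla)\bhe+\mu_0(\bM\cdot\nabla)(\bhe-\bH)$, controlled by the Lipschitz regularity of $\bM=\kappa_0\bH$ and the uniform $L^2_tH^1_x$-bound on $\bhe$ inherited from \eqref{eq:energy0}.

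The decisive step concerns the magnetic variables. Differentiating $\tfrac{\mu_0}{2\kappa_0}\|\bme-\bM\|^2$ in time via \eqref{seq:dtm} and decomposing $\bme-\bM=(\bme-\kappa_0\bhe)+\kappa_0(\bhe-\bH)$, the relaxation term contributes
\begin{equation*}
-\tfrac{\mu_0}{\tau\kappa_0}\|\bme-\kappa_0\bhe\|^2 + \tfrac{\mu_0(1+\kappa_0)}{\tau}\|\bhe-\bH\|^2,
\end{equation*}
where the first summand is the dissipation of \eqref{eq:defD} and the second is formally singular as $\tau\to 0$. To cancel the latter I would simultaneously differentiate $\tfrac{\mu_0}{2}\|\bhe-\bH\|^2$ by writing $\bhe-\bH=\Grad(\vfe-\Phi)$, differentiating the potential equation \eqref{eq:potential1} in time, and integrating by parts. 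The crucial ingredient is the algebraic identity
\begin{equation*}
\int_\dom(\bme-\bM)\cdot(\bhe-\bH)\, dx = -\|\bhe-\bH\|_{L^2}^2,
\end{equation*}
which follows from $-\Delta(\vfe-\Phi)=\Div(\bme-\bM)$ and the boundary compatibility $\partial_n(\vfe-\Phi)+(\bme-\bM)\cdot\normal=0$ (inherited from \eqref{eq:potentialbc} and the analogous Neumann condition for $\Phi$), together with the quasi-equilibrium relation $\bM=\kappa_0\bH$. Inserting this identity in the $\bhe$-evolution produces an exactly matching $-\tfrac{\mu_0(1+\kappa_0)}{\tau}\|\bhe-\bH\|^2$ contribution, cancelling the singular term and leaving $-\tfrac{\mu_0}{\tau\kappa_0}\|\bme-\kappa_0\bhe\|^2$ as the surviving fast-timescale dissipation; a corollary is the pointwise comparison $\|\bme-\kappa_0\bhe\|^2=\|\bme-\bM\|^2+\kappa_0(2+\kappa_0)\|\bhe-\bH\|^2$, so this dissipation coercively controls the whole magnetic part of $\mathcal{R}$. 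The remaining source terms (arising from $\partial_t\bH$, $\Grad\bM$, trilinear transport terms, and boundary data through $\partial_t\bh_a$) are bounded by Young's inequality against $\mathcal{R}$; combined with the fast magnetic decay, a Duhamel estimate converts the $L^2_{t,x}$-regularity of $\partial_t\bH$ into an $O(\sqrt\tau)$ bound for the magnetic part of $\mathcal{R}^{1/2}$, which then feeds into the hydrodynamic part via the Kelvin coupling, and a Gronwall argument closes the estimate. The main obstacle is precisely this cancellation of the singular $O(\tau^{-1})\|\bhe-\bH\|^2$ term: without simultaneously exploiting $\bM=\kappa_0\bH$, the elliptic magnetostatic coupling, and the Neumann boundary compatibility (which makes the boundary integrals in the integration by parts vanish), a naive estimate produces a non-absorbable factor and closure fails.
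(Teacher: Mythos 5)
Your overall strategy — relative entropy with $\mathcal{R}(t)$ modelled on the energy~\eqref{eq:defE}, cross-testing the weak formulation of~\eqref{eq:rosenzweig} against $\sol_0$ and the strong equations against $\sole$, and closing by Gr\"onwall — is the same as the paper's, and your analysis of the magnetic cancellation is correct. The identity $\int_\dom(\bme-\bM)\cdot(\bhe-\bH)\,dx=-\|\bhe-\bH\|_{L^2}^2$ (from $-\Delta(\vfe-\Phi)=\Div(\bme-\bM)$ with compatible Neumann data) is exactly what the paper exploits, and your $\|\bme-\kappa_0\bhe\|^2 = \|\bme-\bM\|^2 + \kappa_0(2+\kappa_0)\|\bhe-\bH\|^2$ is correct (though it is an $L^2$-norm identity, not pointwise). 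The paper arranges the same cancellation differently: in its cross-term computation the relaxation contributions of $(c)$ and $(d)$ cancel exactly after inserting $\bM=\kappa_0\bH$, so the singular $\tau^{-1}\|\bhe-\bH\|^2$ piece never appears explicitly, whereas you manufacture it and cancel it; both routes leave only the coercive $-\tfrac{\mu_0}{\kappa_0\tau}\|\bme-\kappa_0\bhe\|^2$.

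There is, however, a genuine gap in your treatment of the Kelvin force difference. You split it as $\mu_0((\bme-\bM)\cdot\nabla)\bhe + \mu_0(\bM\cdot\nabla)(\bhe-\bH)$ and invoke ``the uniform $L^2_tH^1_x$-bound on $\bhe$ inherited from \eqref{eq:energy0}.'' No such bound exists: the dissipation $D(\sol)$ in~\eqref{eq:defD} contains no gradient of $\bh$, and since the weak solution only has $\bme\in L^\infty(0,T;L^2)$, the magnetostatic equation gives $\bhe\in L^\infty(0,T;L^2)$ and nothing better. Both summands in your split contain $\nabla\bhe$ (the second contains $\nabla(\bhe-\bH)$, hence $\nabla\bhe$), and neither is controlled. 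The paper circumvents this by not placing any derivative on $\bhe$: through Lemma~\ref{lem:integrableweaksol} and the weak formulation, the Kelvin force enters the cross-term as $\mu_0\,\bh\cdot\big[(\bm+\bh)\cdot\Grad\big]\bU$, with the gradient on the smooth $\bU$. A further rewriting via the magnetostatic equation (the derivation of~\eqref{eq:faenx}) then isolates $\text{III}_1,\text{III}_2,\text{III}_3$, in which the only gradients present are $\Grad\bU$ (bounded by the Lipschitz hypothesis) and $\Grad\bH$ (bounded by the hypothesis $\bH\in L^2_tH^1_x$), and the cross-factors $\bm-\kappa_0\bh$ and $\bh-\bH$ are absorbed into the $\tau^{-1}$ dissipation and $\mathcal{R}$ respectively. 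You would need to replicate this rearrangement; your proposed decomposition as written cannot be closed. Your mention of a Duhamel argument to extract the $O(\sqrt\tau)$ magnetic decay separately is also looser than what the paper does (direct absorption via Young's inequality into the $\tau^{-1}$ dissipation, then one global Gr\"onwall); it is not wrong in spirit, but it would require additional control of the fast dynamics that is harder to justify at the regularity of weak solutions.
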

The proof of the latter result uses the relative entropy method that was introduced by Dafermos~\cite{Dafermos1979,Dafermos1979b} and DiPerna~\cite{DiPerna1979} in the context of hyperbolic systems of conservation laws, see also~\cite{Dafermos1979a}. One constructs a suitable relative entropy functional that quantifies the difference between $\sole$ and the solution of the limiting system $\sol_0=(\bU,\bW,\bM,\bH)$ in $L^2$ and bounds its time evolution in terms of $\tau$. This is achieved by a careful estimation of all the resulting growth terms using the available bounds for the solution in an appropriate way. 

\subsection{Outline of this article.} The outline of this article is as follows: In Section \ref{S2} we present preliminaries and the energy law that governs the Rosensweig model. 
In Section \ref{S4} we present the proof of the global existence results for the Rosensweig model. Section \ref{S6} is devoted to the relaxation to equilibrium and the proof of Theorems~\ref{thm:convequilibriumweaksol} and~\ref{thm:convergencerate}. 
Finally, in Appendix \ref{S8} we discuss the global existence of classical solutions in $\R^2.$ 

\section{Preliminaries}\label{S2}
We start by introducing some notation: We denote by $\hdiv$ the space of divergence free $L^2(\dom)$ functions and by $\V$ the space of divergence free functions in $H^1_0(\dom)$ (these can be obtained as the closures of $C^\infty_0(\dom)\cap \{\Div u=0\}$ in $L^2(\dom)$ and $H^1_0(\dom)$ respectively (c.f.~\cite{Temam2001}):
\begin{equation*}
\begin{split}
\hdiv &= \{u\in L^2(\dom);\, \Div u = 0,\, u\cdot n|_{\partial \dom}=0\}, \\
\V &=\{u\in H^1(\dom);\, \Div u = 0,\, u|_{\partial \dom}=0\}. 
\end{split}
\end{equation*}
For a Banach space $X$ we let $C_w(0,T;X)$ be the space of functions that are weakly continuous in time, that is, if $v\in C_w(0,T;X)$, then for any $s\to t$,
\begin{equation*}
\langle v(t_n),g\rangle\to \langle v(t), g\rangle,\quad \text{for } s\to t, \quad \forall \, g\in X^*,
\end{equation*}
where we denoted by $X^*$ the dual space of $X$ and by $\langle\cdot,\cdot\rangle$ the dual product on $X,X^*$. Now we are ready to define a notion of weak solution for the Rosenzweig system \eqref{eq:rosenzweig}:
\begin{definition}[Distributional solution of the Rosensweig model]
	\label{def:weaksol}
	Let $T>0$, $\dom\subset\R^d$, $d=2,3$, a smooth, simply connected domain. We say that $\sol:=(\bu,\bw,\bm,\bh)$ is a global weak solution of the system \eqref{eq:rosenzweig} if the following conditions are satisfied:
	\begin{enumerate}[label=(\roman*)]
		\item The solution $\sol:=(\bu,\bw,\bm,\bh)$ satisfies the regularity requirements:
		\begin{align*}
		\bu&\in L^\infty(0,T;\hdiv)\cap L^2(0,T;\V)\cap C_w(0,T;L^2(\dom))\\
		\bw & \in L^\infty(0,T;L^2(\dom))\cap L^2(0,T;H^1_0(\dom))\cap C_w(0,T;L^2(\dom))\\
		\bm & \in L^\infty(0,T;L^2(\dom))\cap C_w(0,T;L^2(\dom))\\
		\bh & \in L^\infty(0,T;L^2(\dom))\cap C_w(0,T;L^2(\dom));
		\end{align*}
		\item the function $\bh$ is such that $\bh=\Grad \varphi$, where $\varphi\in L^\infty(0,T;H^1(\dom))$ and satisfies for all $\psi\in H^1(\dom)$,
		\begin{equation*}
		\int_{\dom} \Grad\varphi \cdot\Grad\psi dx =\int_{\dom} \left(\bh_a-\bm\right)\cdot \Grad \psi\, dx,
		\end{equation*}
		with Neumann boundary conditions: 
		\begin{equation*}
		\frac{\partial\varphi}{\partial n}=(\bh_a-\bm)\cdot\normal\quad \textrm{on }[0,T]\times\Gamma\quad\text{ with } \int_{\dom}\varphi(t,x) dx = 0;
		\end{equation*}
		\item Equations \eqref{seq:dtu}--\eqref{seq:dtm} hold weakly, that is for any test functions $\psi_i\in C^1_c([0,T)\times\dom)$ $i=1,\dots,4$ with vanishing trace on $\partial\dom$ and $\Div \psi_1(t,x) = 0$ for all $(t,x)\in[0,T]\times\dom$, 
		\begin{align*}
		&\lint \left[\bu\cdot\partial_t\psi_1+((\bu\cdot\Grad)\psi_1)\cdot\bu -(\nu+\nu_r)\Grad\bu:\Grad \psi_1 \right]dx dt + \int_{\dom} \bu_0\cdot\psi_1(0,x) dx\\ &\qquad=\lint\left[-2\nu_r \bw\cdot\Curl\psi_1 +\mu_0\left(\left(((\bm+\bh)\cdot\Grad)\psi_1\right)\cdot\bh \right)\right] dx dt\\
		&\int_{\dom}\bu(t,x)\cdot\Grad\psi_2(t,x) dx = 0,\quad \text{a.e. } t\in [0,T]\\
		&\lint\left[\bw\cdot\partial_t\psi_3+((\bu\cdot\Grad)\psi_3)\cdot\bw
		-c_1\Grad\bw:\Grad\psi_3 - c_2\Div\bw\, \Div\psi_3
		\right]dx dt + \int_{\dom} \bw_0\cdot\psi_3(0,x) dx \\
		&\qquad=\lint\left[  4\nu_r\bw\cdot\psi_3-2\nu_r \bu\cdot\Curl\psi_3 -\mu_0\left(\bm\times\bh\right)\cdot\psi_3\right] dx dt\\
		&\lint \left[\bm\cdot\partial_t\psi_4+(\bu\cdot\Grad)\psi_4\cdot\bm \right]dx dt + \int_{\dom} \bm_0\cdot\psi_4(0,x) dx\\ &\qquad=\lint\left[-(\bw\times\bm)\cdot\psi_4+\frac{1}{\tau}(\bm-\kappa_0\bh)\cdot\psi_4\right] dx dt
		\end{align*}
		where $\bu_0\in\hdiv$, $\bw_0,\bm_0\in L^2(\dom)$ are the initial conditions;
		
		\item The solution $\sol:=(\bu,\bw,\bm,\bh)$ satisfies the following energy inequality,
		\begin{equation}\label{eq:energy0}
		\int_{\dom} E(\sol)(t) dx +\int_0^t\int_{\dom} D(\sol)(s)\,dx ds\leq \int_{\dom} E(\sol)(0) dx +\mu_0 \int_0^t\int_{\dom}\partial_s\bh_a\cdot\bh \,dxds.
		\end{equation}
		where the energy $E$ is defined by
		\begin{equation}\label{eq:defE}
		E(\sol) = \frac{1}{2}\left(|\bu|^2 +|\bw|^2+ \frac{\mu_0}{\kappa_0}|\bm|^2+\mu_0|\bh|^2\right),
		\end{equation}
		and the dissipation functional $D$ is defined by
		\begin{equation}\label{eq:defD}
		D(\sol) = \left(\nu |\Grad \bu|^2 + c_1 |\Grad\bw|^2 + c_2|\Div \bw|^2+\nu_r|\Curl \bu -2\bw|^2+\frac{\mu_0}{\tau \kappa_0}|\bm-\kappa_0\bh|^2 \right).
		\end{equation}
		\end{enumerate}
\end{definition}
\begin{remark}\label{rem:energysmoothsol}
	For smooth solutions, the energy inequality~\eqref{eq:energy0} in fact becomes an equality and can be obtained by taking the inner product of equation~\eqref{seq:dtu} with $\bu$, equation~\eqref{seq:dtw} with $\bw$, equation~\eqref{seq:dtm} with $(\frac{\mu_0}{\kappa_0}\bm-\mu_0\bh)$ and the time-differentiated equation~\eqref{seq:divb} with $-\mu_0\varphi$, adding all of them, integrating over $\dom$, and then integrating by parts a couple of times.
\end{remark}

\subsection{Auxiliary results}
If $\bh$ does not have more spatial regularity than $L^2(\dom)$, the Kelvin force $\mu_0 (\bm\cdot\Grad)\bh$ in~\eqref{seq:dtu} is not well-defined, not even in a weak sense. Therefore, we used the following identity to define weak solutions in Definition~\ref{def:weaksol}, which allows us to make sense of the Kelvin force for $\bm$ and $\bh$ with little regularity.
\begin{lemma}\label{lem:integrableweaksol}
	Any sufficiently smooth solution of \eqref{eq:rosenzweig} satisfies
	\begin{equation}\label{eq:mhidentity}
	(\bm\cdot\Grad)\bh = \Div\left((\bm+\bh)\otimes \bh \right) -\frac{1}{2}\Grad \left(|\bh|^2\right).
	\end{equation}
\end{lemma}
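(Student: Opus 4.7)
The plan is to expand the right-hand side of~\eqref{eq:mhidentity} directly, using the two magnetostatic constraints $\Div(\bm+\bh)=0$ from~\eqref{seq:divb} and $\Curl\bh=0$ from~\eqref{seq:curlh}, and then read off the Kelvin force.

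First, with the convention $(A\otimes B)_{ij}=A_iB_j$ and $(\Div T)_j=\partial_iT_{ij}$, the product rule gives
$$\Div\bigl((\bm+\bh)\otimes \bh\bigr) = \bigl(\Div(\bm+\bh)\bigr)\bh + \bigl((\bm+\bh)\cdot\Grad\bigr)\bh.$$
The first term drops because of~\eqref{seq:divb}. Splitting the remaining convective piece as $(\bm\cdot\Grad)\bh+(\bh\cdot\Grad)\bh$, I will apply the standard vector-calculus identity
$$(\bh\cdot\Grad)\bh = \tfrac{1}{2}\Grad|\bh|^2 - \bh\times\Curl\bh,$$
and use $\Curl\bh=0$ to discard the cross product. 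Substituting back and cancelling the $\tfrac{1}{2}\Grad|\bh|^2$ contribution on the right-hand side of~\eqref{eq:mhidentity} leaves exactly $(\bm\cdot\Grad)\bh$, as claimed.

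I do not expect any real obstacle: the lemma is a purely algebraic vector-calculus identity for smooth fields, and the only structural inputs are the two stationary Maxwell relations. The whole point of the manipulation is downstream, namely that the reformulation $\mu_0(\bm\cdot\Grad)\bh = \mu_0\Div((\bm+\bh)\otimes\bh)-\tfrac{\mu_0}{2}\Grad|\bh|^2$ makes sense distributionally for $\bm,\bh\in L^\infty_tL^2_x$ when tested against divergence-free test functions (for which the $\Grad|\bh|^2$ term integrates out), which is what justifies the weak form of~\eqref{seq:dtu} recorded in Definition~\ref{def:weaksol}.
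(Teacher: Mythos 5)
Your proof is correct and takes essentially the same approach as the paper: both rest on the same two inputs, $\Div(\bm+\bh)=0$ from~\eqref{seq:divb} and $\Curl\bh=0$ from~\eqref{seq:curlh}. The paper just carries out the computation component-by-component via the potential $\varphi$, whereas you package the $(\bh\cdot\Grad)\bh$ piece with the standard identity $(\bh\cdot\Grad)\bh=\tfrac12\Grad|\bh|^2-\bh\times\Curl\bh$; the underlying manipulation is identical.
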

\begin{proof}
	We write the differential operators in terms of their components:
	\begin{equation}\label{eq:calc1}
	\begin{split}
	\left((\bm\cdot\Grad)\bh\right)^{(j)} &= \sum_{i=1}^3 m^{(i)}\partial_i h^{(j)}\\
	&= \sum_{i=1}^3 \partial_i\left(m^{(i)} h^{(j)}\right) -\sum_{i=1}^3 \partial_i m^{(i)} h^{(j)} \\
	& = \Div\left(\bm h^{(j)}\right) -\Div \bm\, h^{(j)}.
	\end{split}
	\end{equation}
	We use equation \eqref{seq:divb} to replace the divergence of $\bm$ in the last expression:
	\begin{equation}\label{eq:calc2}
	\Div\left(\bm h^{(j)}\right) -\Div \bm\, h^{(j)} = \Div\left(\bm h^{(j)}\right) +\Div \bh\, h^{(j)} .
	\end{equation}
	By \eqref{seq:curlh}, $\bh$ is the gradient of a potential $\varphi$, therefore
	\begin{equation*}
	\begin{split}
	\Div \bh\, h^{(j)} &= \Delta \varphi \,\partial_j \varphi\\
	& = \sum_{i=1}^3\partial_{ii}^2 \varphi\, \partial_j \varphi\\
	& = \sum_{i=1}^3\left( \partial_i\left(\partial_i\varphi\,\partial_j \varphi\right) - \partial_i\varphi\,\partial_i\partial_j \varphi\right) \\
	& = \sum_{i=1}^3\left( \partial_i\left(\partial_i\varphi\,\partial_j \varphi\right) - \frac{1}{2}\partial_j\left|\partial_i \varphi\right|^2\right) \\
	& = \Div\left(\Grad\varphi\,\partial_j\varphi\right) - \frac{1}{2}\partial_j\left(\left|\Grad\varphi\right|^2\right)\\
	& = \Div\left(\bh\,h^{(j)}\right) - \frac{1}{2}\partial_j\left(|\bh|^2\right), 
	\end{split}
	\end{equation*}
	where we replaced $\Grad\varphi$ by $\bh$ again in the last equation. We combine the last calculation with \eqref{eq:calc1} and \eqref{eq:calc2} and obtain
	\begin{equation*}
	\begin{split}
	\left((\bm\cdot\Grad)\bh\right)^{(j)} &= \Div\left(\bm h^{(j)}\right) +\Div\left(\bh\,h^{(j)}\right) - \frac{1}{2}\partial_j\left(|\bh|^2\right)\\
	& = \Div\left((\bm+\bh) h^{(j)}\right)  - \frac{1}{2}\partial_j\left(|\bh|^2\right),
	\end{split}
	\end{equation*}
	which is the component form of \eqref{eq:mhidentity}.
\end{proof}	
\begin{remark}
	If we only require $\bm\in L^2(\dom)$, $\bh\in H^1(\dom)$, identity \eqref{eq:mhidentity} still holds in the sense of distributions for smooth enough and compactly supported in $\dom$ test functions $\psi$:
	\begin{equation}\label{eq:mhidentitydistributions}
	\int_{\dom}\left((\bm\cdot\Grad)\bh\right)\cdot\psi dx = -\int_{\dom} \left[\left(\left((\bm+\bh)\cdot \Grad\right)\psi\right)\cdot\bh -\frac{1}{2} |\bh|^2 \Div\psi\right] dx.
	\end{equation} 
	The right hand side of equation \eqref{eq:mhidentitydistributions} is bounded even if $\bm,\bh\in L^2(\dom)$ (and $\psi$ is smooth enough, i.e., in $C^1_c(\dom)$ with bounded derivatives). It therefore allows us to define weak solutions of the Rosensweig model \eqref{eq:rosenzweig} when $\bm$ and $\bh\in L^2(\dom)$ only.
\end{remark}
\section{Existence of global weak solutions}\label{S4}
In this section we prove the global existence of weak solutions to system~\eqref{eq:rosenzweig}, 
by constructing a sequence of solutions that satisfies an approximating system and by establishing  that the sequence converges to a solution of~\eqref{eq:rosenzweig}. As an approximating sequence, we use weak solutions of the regularized system,
\begin{subequations}\label{eq:approxsys}
	\begin{align}
	\bus_t +(\bus\cdot\Grad) \bus- (\nu+\nu_r)\Delta \bus+\Grad p^\sigma &= 2 \nu_r \Curl \bws +\mu_0 (\bms\cdot\Grad)\bhs,\label{seq:dtua}\\
	\Div \bus &=0\label{seq:divua},\\
	\bws_t + (\bus\cdot \Grad)\bws - c_1\Delta \bws -c_2 \Grad\Div\bws +4 \nu_r \bws &= 2\nu_r \Curl \bus+\mu_0 \bms\times \bhs,\label{seq:dtwa}\\
	\bms_t +(\bus\cdot \Grad)\bms-\sigma \Delta \bms &=\bws\times \bms-\frac{1}{\tau}(\bms-\kappa_0 \bhs),\label{seq:dtma}\\
	\Curl \bhs &=0,\label{seq:curlha}\\
	\Div (\bhs + \bms)&= 0\label{seq:divba},
	\end{align}
\end{subequations}
which is identical to~\eqref{eq:rosenzweig} up to the extra term $\sigma\Delta\bms$ in the magnetization equation~\eqref{seq:dtma}, where $\sigma>0$, that yields additional regularity for $\bms$ and so also for $\bhs$. In the literature, this regularized equation for $\bms$ is sometimes called a Bloch-Torrey type equation~\cite{Amirat2008,Torrey1956,Gaspari1966} because it was proposed by Torrey~\cite{Torrey1956} and can be seen as a generalization of the Bloch equations~\cite{Bloch1946} to describe situations in which the diffusion of the spin magnetic moment is not negligible. 

Since equation~\eqref{seq:dtma} is parabolic for $\sigma>0$, additional boundary conditions for $\bms$ have to be imposed in contrast to the case $\sigma=0$. A discussion of several reasonable choices of boundary conditions for $\bms$ can be found in~\cite[Section 2.3]{Nochetto2016}. We use the natural boundary conditions
\begin{equation}
\label{eq:boundaryconditionsm}
\Curl \bms\times\normal =0,\quad \Div\bms = 0,\quad \text{on } (0,T)\times\partial\dom,
\end{equation}
which allow obtaining an energy-stable system for the chosen boundary conditions for $\bhs$ (i.e. \eqref{eq:potentialbc}). The energy inequality is needed to show existence of weak solutions of the system~\eqref{eq:approxsys} using a Galerkin approximation, as it was done for a slightly different system by Amirat, Hamdache, Murat in~\cite{Amirat2008}. In particular, in their system, equation~\eqref{seq:divba} is replaced by
\begin{equation*}
\Div(\bhs+\bms)=f,\quad \bhs=\Grad\phis,
\end{equation*}
with boundary conditions 
\begin{equation*}
\frac{\partial\varphi}{\partial n}=0,\quad  \Curl \bms\times\normal =0,\quad \bms\cdot\normal = 0,\quad \text{on } (0,T)\times\partial\dom,
\end{equation*}
for $\phis$ and $\bms$ instead of~\eqref{eq:boundaryconditionsm}. Up to this difference, which implies that $\bms$ is sought in a different space, one defines weak solutions of~\eqref{eq:approxsys} in the same way as in~\cite{Amirat2008} (see upcoming Definition~\ref{def:approxweaksol}) and also existence of weak solutions is proved in the same way (the modifications needed due to the changed boundary conditions are sketched in Appendix~\ref{a:modifications}).
The solution $\bms(t)$ now lies for almost every $t\in [0,T]$ in the space
\begin{equation}
\label{eq:spaceform}
\mathcal{K}:=\{q\in L^2(\dom)\,|\,\Div q\in L^2(\dom),\,\Curl q\in L^2(\dom)\}=H(\Div)\cap H(\Curl),
\end{equation}
equipped with the inner product
\begin{equation*}
\langle q_1, q_2\rangle := \int_{\dom} q_1\cdot q_2\, dx +\int_{\dom} \Div q_1 \Div q_2\, dx + \int_{\dom}\Curl q_1\cdot \Curl q_2\, dx.
\end{equation*}

Then, weak solutions of~\eqref{eq:approxsys} are defined as follows:

\begin{definition}[Distributional solution of~\eqref{eq:approxsys},~\cite{Amirat2008}]
	\label{def:approxweaksol}	
	Let $T>0$, $\dom\subset\R^3$ a smooth, simply connected domain. We say that $\sol^\sigma:=(\bus,\bws,\bms,\bhs)$ is a global weak solution of the system \eqref{eq:approxsys} if the following conditions are satisfied:
	\begin{enumerate}[label=(\roman*)]
		\item The solution $\sol^\sigma=(\bus,\bws,\bms,\bhs)$ has the regularity properties:
		\begin{align*}
		\bus&\in L^\infty(0,T;\hdiv)\cap L^2(0,T;\V)\cap C_w([0,T];\hdiv)\\
		\bws & \in L^\infty(0,T;L^2(\dom))\cap L^2(0,T;H^1_0(\dom))\cap C_w([0,T];L^2(\dom))\\
		\bms & \in L^\infty(0,T;L^2(\dom))\cap L^2(0,T;\mathcal{K})\cap C_w([0,T];L^2(\dom))\\
		\bhs & \in L^\infty(0,T;L^2(\dom))\cap L^2(0,T;H^1(\dom));
		\end{align*}
		\item the function $\bhs$ is such that $\bhs=\Grad \phis$, where $\phis\in L^\infty(0,T;H^1(\dom))\cap L^2(0,T;H^2(\dom))$ solves the problem
		\begin{subequations}
			\label{eq:ellipticTorrey}
			\begin{align}
			-\Delta \phis &=\Div\bms ,\quad \text{in } [0,T]\times\dom,\label{seq:phiapprox}\\
			\frac{\partial\phis}{\partial \normal}&=(\bh_a-\bms)\cdot\normal,\quad\text{ on }[0,T]\times\partial\dom,\quad \int_{\dom}\phis\, dx =0,\quad \text{ in }(0,T),\label{eq:phiapproxbc}
			\end{align}
		\end{subequations}
		in the distributional sense;
		\item Equations \eqref{seq:dtua}--\eqref{seq:dtma} hold weakly, that is, for any test functions $\psi_1\in \V$, $\psi_2\in H^1_0(\dom)$, $\psi_3\in\mathcal{K}$, we have
		\begin{align}
		&\frac{d}{dt}\int_{\dom} \bus\cdot\psi_1 \, dx +\int_{\dom}[(\bus\cdot\Grad)\bus]\cdot\psi_1 \, dx +(\nu+\nu_r)\int_{\dom}\Grad\bus:\Grad\psi_1\, dx\notag\\
		&\qquad\qquad\qquad = \mu_0\int_{\dom}(\bms\cdot\Grad)\bhs\cdot\psi_1\, dx +2\nu_r\int_{\dom}\Curl\bws\cdot\psi_1\, dx,\quad\text{in } \mathcal{D}'((0,T)),\label{eq:weakapproxu}\\
		&\qquad \qquad \bus(0,\cdot)=\bu_0;\notag\\
		&\frac{d}{dt}\int_{\dom}\bws\cdot\psi_2 \,dx +\int_{\dom}[(\bus\cdot\Grad)\bws]\cdot\psi_2\, dx +c_1\int_{\dom}\Grad\bws:\Grad\psi_2\, dx + c_2\int_{\dom}\Div\bws\Div\psi_2\, dx\notag\\
		&\qquad\qquad\qquad=\mu_0\int_{\dom}(\bms\times\bhs)\cdot\psi_2\, dx
		+2\nu_r\int_{\dom}(\Curl\bus-2\bws)\cdot\psi_2\, dx,\quad\text{in } \mathcal{D}'((0,T)),\label{eq:weakapproxw}\\
		&\qquad \qquad \bws(0,\cdot)=\bw_0;\notag\\
		&\frac{d}{dt}\int_{\dom}\bms\cdot\psi_3 \,dx -\int_{\dom}[(\bus\cdot\Grad)\psi_3]\cdot\bms\, dx +
		\sigma\int_{\dom}\Curl\bms:\Curl\psi_3\, dx + \sigma\int_{\dom}\Div\bms\Div\psi_3\, dx\notag\\
		&\qquad\qquad\qquad= \int_{\dom}(\bws\times\bms)\cdot\psi_3\, dx-\frac{1}{\tau}\int_{\dom}(\bms-\kappa_0\bhs)\cdot\psi_3\, dx,\quad\text{in } \mathcal{D}'((0,T)),\label{eq:weakapproxm}\\
		&\qquad \qquad \bms(0,\cdot)=\bm_0;\notag
		\end{align}
		where $\bu_0\in\hdiv$, $\bw_0,\bm_0\in L^2(\dom)$ are the initial conditions.		
	\end{enumerate}
\end{definition}

\begin{remark}
	Using Lemma~\ref{lem:integrableweaksol}, the weak formulation for $\bus$ can be rewritten as
	\begin{multline*}
	\frac{d}{dt}\int_{\dom} \bus\cdot\psi_1 \, dx +\int_{\dom}[(\bus\cdot\Grad)\bus]\cdot\psi_1 \, dx +(\nu+\nu_r)\int_{\dom}\Grad\bus:\Grad\psi_1\, dx\\
	= -\mu_0\int_{\dom}\left(((\bms+\bhs)\cdot\Grad)\psi_1\right)\cdot\bhs\, dx  +2\nu_r\int_{\dom}\Curl\bws\cdot\psi_1\, dx,\quad\text{in } \mathcal{D}'((0,T)).
	\end{multline*}
\end{remark}
\subsection{Energy inequality}
In~\cite[Theorem 1]{Amirat2008}, it was proved that weak solutions as in Definition~\ref{def:approxweaksol} exist if $\bu_0\in\hdiv$, $\bw_0,\bm_0\in L^2(\dom)$ and that they satisfy in addition an energy inequality. The energy inequality proved there is slightly different from the one we are going to use in the following, but can be proved in the same way. Formally, one can use $\psi_1=\bus$, $\psi_2=\bws$ and $\psi_3=\frac{\mu_0}{\kappa_0}\bms-\mu_0\bhs$ as test functions in~\eqref{eq:weakapproxu}, \eqref{eq:weakapproxw}, and~\eqref{eq:weakapproxm} respectively, and add; take the derivative of~\eqref{seq:phiapprox}, multiply with $\mu_0\phis$, integrate over $\dom$ and add as well, to obtain, 
\begin{equation}\label{eq:energyapprox}
\begin{split}
&\frac{1}{2}\frac{d}{dt}\int_{\dom}\left(|\bus|^2+|\bws|^2+\frac{\mu_0}{\kappa_0}|\bms|^2+\mu_0|\bhs|^2\right)dx\\
&\qquad+\int_{\dom}\left(\nu |\Grad \bus|^2 + c_1 |\Grad\bws|^2 + c_2|\Div \bws|^2+\nu_r|\Curl \bus -2\bws|^2+\frac{\mu_0}{\tau \kappa_0}|\bms-\kappa_0\bhs|^2\right) dx \\
&\qquad +\sigma\mu_0\int_{\dom}\left(\frac{1}{\kappa_0}|\Curl\bms|^2+\left(\frac{1}{\kappa_0}+1\right)|\Div\bms|^2\right) dx  = \mu_0\int_{\dom}\partial_t\bh_a\cdot\bhs dx,
\end{split}
\end{equation}
where we have also used that
\begin{equation*}
\Delta\bms =-\Curl^2\bms+\Grad\Div\bms,
\end{equation*}
and that this identity combined with~\eqref{seq:phiapprox}, and the boundary conditions for $\bms$ and $\bhs$, yields
\begin{align*}
\int_{\dom}\Delta\bms\cdot\bhs \, dx & = -\int_{\dom}(\Curl^2\bms-\Grad\Div\bms)\cdot\bhs\, dx\\
& = -\int_{\dom}\Curl\bms\cdot\Curl\bhs\, dx -\int_{\dom}\Div\bms\Div\bhs\, dx\\
&\qquad+\int_{\partial\dom}(\Curl\bms\times\normal)\cdot\bhs\,dS+\int_{\partial\dom}\Div\bms (\bhs\cdot\normal)\, dS\\
& = -\int_{\dom}\Div\bms\Div\bhs\, dx\\
& = \int_{\dom}|\Div\bms|^2\, dx.
\end{align*}
Since all of this has to be done at the level of the Galerkin approximation first, then passing to the limit in the approximation,~\eqref{eq:energyapprox} holds only as an inequality.

\begin{remark}
	The existence proof in~\cite{Amirat2008} also shows that $\bms$ satisfies
	\begin{multline}
	\label{eq:mineq}
	\frac{1}{2}\int_{\dom}|\bms|^2(t) dx +\sigma\int_0^t\!\!\int_{\dom}\left(|\Curl\bms|^2+|\Div\bms|^2\right)dxds \\\leq \frac{1}{2}\int_{\dom}|\bms_0|^2 dx -\frac{1}{\tau}\int_0^t\!\!\int_{\dom}\bms\cdot(\bms-\kappa_0\bhs)dx ds
	\end{multline}
\end{remark}

\subsection{Renormalized solutions}
In the proof of Proposition~\ref{prop:strongconvm}, we will make use of the fact that the magnetization equation~\eqref{seq:dtm} has the structure of a transport equation. In fact, we prove here that $\bm$ is `renormalized' as in the sense of DiPerna and Lions~\cite{DiPernaLions1989,Lions1998book,Novotny2004,Feireisl2004}.
We will need the following lemma:

\begin{lemma}
	\label{lem:transport}
	Let $\bm\in L^\infty(0,T;L^2(\dom))$ be a distributional solution of
	\begin{equation}\label{eq:mtransport}
	\partial_t \bm +\left(\bu\cdot\Grad\right)\bm = \bw\times\bm -\frac{1}{\tau}(\bm-\kappa_0\bh),
	\end{equation} 
	for given $\bu\in L^\infty(0,T;\hdiv)\cap L^2(0,T;\V)$, $\bw\in L^\infty(0,T;L^2(\dom))\cap L^2(0,T;H^1_0(\dom))$ and $\bh\in L^\infty(0,T;L^2(\dom))$. Then the components $m^{(i)}$ of $\bm$ satisfy for any $b\in C^1(\R)$ with $b'(\cdot)$ bounded, and $b(0)=0$,
	\begin{equation*}
	\partial_t b(m^{(i)}) + \bu\cdot\Grad b(\mi) = b'(\mi)(\bw\times\bm)^{(i)}-\frac{1}{\tau} b'(\mi)(\mi-\kappa_0 \hi),
	\end{equation*}
	in the sense of distributions.
\end{lemma}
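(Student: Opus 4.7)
My plan for proving Lemma~\ref{lem:transport} is the standard DiPerna--Lions regularization and commutator argument. Let $\rho_\eps$ be a standard spatial mollifier and set $\bm_\eps := \bm *_x \rho_\eps$. Convolving \eqref{eq:mtransport} componentwise with $\rho_\eps$, on any compact subset of $\dom$, yields
\begin{equation*}
\partial_t \mi_\eps + (\bu\cdot\Grad)\mi_\eps = (\bw\times\bm)_\eps^{(i)} - \tfrac{1}{\tau}(\mi_\eps - \kappa_0\hi_\eps) + r_\eps^{(i)},
\end{equation*}
where the commutator $r_\eps^{(i)} := (\bu\cdot\Grad)\mi_\eps - ((\bu\cdot\Grad)\mi)_\eps$ arises because mollification does not commute with multiplication by $\bu$. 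Since $\mi_\eps$ is smooth in $x$, multiplying by $b'(\mi_\eps)$ and using $\Div\bu=0$ gives, by the chain rule,
\begin{equation*}
\partial_t b(\mi_\eps) + (\bu\cdot\Grad)b(\mi_\eps) = b'(\mi_\eps)\bigl[(\bw\times\bm)_\eps^{(i)} - \tfrac{1}{\tau}(\mi_\eps - \kappa_0\hi_\eps)\bigr] + b'(\mi_\eps) r_\eps^{(i)}.
\end{equation*}

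The next step is to pass to the limit $\eps\to 0$. Standard mollifier properties, together with $\bm\in L^\infty_t L^2_x$, give $\mi_\eps\to \mi$ in $L^2_{\mathrm{loc}}$ and, along a subsequence, a.e.; hence $b(\mi_\eps)\to b(\mi)$ and $b'(\mi_\eps)\to b'(\mi)$ pointwise, with $b'(\mi_\eps)$ uniformly bounded by $\|b'\|_\infty$ and $|b(\mi_\eps)|\leq\|b'\|_\infty|\mi_\eps|$ since $b(0)=0$. For the source terms, $\bw\in L^2(0,T;L^q(\dom))$ for some $q>2$ by Sobolev embedding ($q=6$ if $d=3$, any $q<\infty$ if $d=2$), so $\bw\times\bm\in L^1_{\mathrm{loc}}$ and $(\bw\times\bm)_\eps\to \bw\times\bm$ in $L^1_{\mathrm{loc}}$; similarly $\hi_\eps\to \hi$ in $L^2_{\mathrm{loc}}$. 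Combined with the pointwise convergence and uniform boundedness of $b'(\mi_\eps)$, dominated convergence passes every term save the commutator to the limit in $\mathcal{D}'$.

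The heart of the argument, and the step I expect to be the main technical obstacle, is showing that $r_\eps\to 0$ in $L^1_{\mathrm{loc}}$, which then gives $b'(\mi_\eps) r_\eps^{(i)}\to 0$ distributionally. This is the Friedrichs/DiPerna--Lions commutator lemma: it applies because $\bu\in L^2(0,T;H^1_0(\dom))$ with $\Div\bu=0$ and $\mi\in L^\infty(0,T;L^2(\dom))$, and the Sobolev embedding $H^1\hookrightarrow L^q$ with $q>2$ in dimensions $d\leq 3$ furnishes the integrability budget needed to run the standard difference-quotient estimate and obtain convergence of $r_\eps$ in $L^1_{\mathrm{loc}}([0,T]\times\dom)$. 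Some mild care near $\partial\dom$, where $\bm_\eps$ is not defined for small enough $\eps$, is handled by testing against functions compactly supported in $\dom$, which is consistent with the distributional formulation claimed in the statement. Once the commutator vanishes in the limit, the identities above produce the desired renormalized equation in $\mathcal{D}'$.
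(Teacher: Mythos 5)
Your proof is essentially the same DiPerna--Lions mollification-and-commutator argument as the paper's, and it correctly establishes the lemma as stated; the one structural difference is in how you handle the boundary. The paper first extends $\bu$, $\bw$, $\bm$, $\bh$ by zero to all of $\R^d$ (this is legitimate precisely because $\bu$ and $\bw$ have vanishing trace, so the extensions keep $\bu\in H^1(\R^d)$ with $\Div\bu=0$ and the weak form of \eqref{eq:mtransport} survives on $\R^d$), and then mollifies on $\R^d$. You instead mollify on compact subsets of $\dom$ and test against $C^\infty_c((0,T)\times\dom)$. For the statement of Lemma~\ref{lem:transport} alone both routes work, but the paper's extension-by-zero buys a genuinely stronger output: it yields \eqref{eq:neededlater} for $\psi\in C^\infty_c(\R^d)$, which is then used in the proof of Lemma~\ref{lem:transport2} with $\psi\equiv 1$ on $\dom$ (not compactly supported in $\dom$) to kill the convective term. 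With your purely local conclusion you would still need the extension argument (or a cutoff-and-Hardy-inequality limit) to run that downstream step, so in the context of this paper the extension is more than cosmetic. One small misattribution: the Sobolev embedding $H^1\hookrightarrow L^q$ is not what makes the commutator vanish --- with $\bu\in W^{1,2}_{\mathrm{loc}}$ and $\mi\in L^2_{\mathrm{loc}}$ the exponent budget $\tfrac12+\tfrac12\leq 1$ already suffices for the Friedrichs/Lions commutator lemma; the Sobolev embedding is what you actually need to make $\bw\times\bm$ locally integrable so that its mollification converges in $L^1_{\mathrm{loc}}$, as you correctly use in the preceding sentence.
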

\begin{proof}
	First we see that we can extend $\bu$, $\bw$ and $\bm$ by zero outside $\dom$ to make them functions of the whole $\R^d$, since $\bw$ and $\bu$ have zero trace and $\bm$ is assumed to be a function of $L^2(\dom)$ only with no requirements on the boundary traces. Since \eqref{eq:mtransport} holds in the sense of distributions, we can use a mollifier $\omega_\epsilon(x)=\epsilon^{-d}\omega(x/\epsilon)$, where $\omega\in C^\infty_c(\R^d)$ is supported on $[-1,1]^d$ and normalized $\int_{\R^d} \omega dx = 1$, as a test function in one component $\mi$ of $\bm$. Denoting $\mi_{\eps}:=\mi\ast\omega_\eps$, $\hi_{\eps}:=\hi\ast\omega_\eps$, we observe that the components $\mi_\eps$ of $\bm_\eps$ satisfy the following equation pointwise in $(t,x)$:
	\begin{equation*}
	\partial_t \mi_\eps + (\bu\cdot\Grad \mi)\ast\omega_\eps = (\bw\times\bm)^{(i)}\ast\omega_\eps - \frac{1}{\tau}(\mi_\eps-\kappa_0 \hi_\eps).
	\end{equation*}	
	We can thus multiply this equation by $b'(\mi_\eps)$ and apply the chain rule:
	\begin{equation*}
	\partial_t b(\mi_\eps) + b'(\mi_\eps)(\bu\cdot\Grad \mi)\ast\omega_\eps = b'(\mi_\eps) (\bw\times\bm)^{(i)}\ast\omega_\eps - \frac{1}{\tau}b'(\mi_\eps)(\mi_\eps-\kappa_0 \hi_\eps).
	\end{equation*}	
	The term $b'(\mi_\eps)(\bu\cdot\Grad \mi)\ast\omega_\eps$ can be written as
	\begin{equation*}
	\begin{split}
	b'(\mi_\eps)(\bu\cdot\Grad \mi)\ast\omega_\eps &= \bu\cdot\Grad b(\mi_\eps)+b'(\mi_\eps)\left((\bu\cdot\Grad \mi)\ast\omega_\eps - \bu\cdot\Grad\mi_\eps\right)\\
	& :=  \bu\cdot\Grad b(\mi_\eps)+b'(\mi_\eps) R_\eps,
	\end{split}
	\end{equation*}	
	hence for any $\psi\in C^\infty_c(\R^d)$
	\begin{multline*}
	\int_{\R^d}b(\mi_\eps(t,x))\psi(x) dx - \int_{\R^d} b(\mi_\eps(0,x))\psi(x) dx  -\int_0^t\int_{\R^d} b(\mi_\eps)\bu\cdot \Grad\psi(x)  dx ds \\
	= \int_0^t\int_{\R^d}\left( b'(\mi_\eps) (\bw\times\bm)^{(i)}\ast\omega_\eps - \frac{1}{\tau}b'(\mi_\eps)\left(\mi_\eps-\kappa_0 \hi_\eps\right)\right)\psi dx ds\\
	- \int_0^t\int_{\R^d}b'(\mi_\eps) R_\eps \psi dx ds.
	\end{multline*}
	Since $b\in C^1(\R)$ with bounded derivative, and $\mi,\hi\in L^\infty(0,T;L^2(\R^d))$, and $\bu,\bw\in L^2(0,T;H^1(\dom))$, we can pass to the limit in all the terms except the term involving $R_\eps$. That one converges to zero in $L^1_{\text{loc}}([0,\infty)\times\R^d)$ by~\cite[Lemma 2.3]{Lions1996}, therefore, since $b'(\cdot)$ is bounded and $\psi$ is compactly supported, the term involving $R_\eps$ converges to zero. We obtain in the limit $\eps\rightarrow 0$,
	\begin{multline}\label{eq:neededlater}
	\int_{\R^d}b(\mi(t,x))\psi(x) dx - \int_{\R^d} b(\mi(0,x))\psi(x) dx  -\int_0^t\int_{\R^d} b(\mi)\bu\cdot \Grad\psi(x)  dx ds \\
	= \int_0^t\int_{\R^d}\left( b'(\mi) (\bw\times\bm)^{(i)} - \frac{1}{\tau}b'(\mi)\left(\mi-\kappa_0 \hi\right)\right)\psi dx ds.
	\end{multline}
	Instead of a test function $\psi$ only depending on space, we could also have used a test function $\psi\in C^\infty_c([0,\infty)\times\R^d)$ and integrated by parts in the term involving the time derivative of $b(\mi_\eps)$. In that case, we obtain, after passing to the limit $\eps\rightarrow 0$,
	\begin{multline*}
	\int_0^\infty\int_{\R^d}b(\mi)\partial_t\psi dxdt +\int_{\R^d} b(\mi(0,x))\psi(x) dx  +\int_0^\infty\int_{\R^d} b(\mi)\bu\cdot \Grad\psi  dx dt \\
	= -\int_0^\infty\int_{\R^d}\left( b'(\mi) (\bw\times\bm)^{(i)} - \frac{1}{\tau}b'(\mi)\left(\mi-\kappa_0 \hi\right)\right)\psi dx dt,
	\end{multline*}
	which is what we wanted to prove (since $\mi$, $\bu$ and $\bw$ are all zero outside $\dom$, we can replace the integration over $\R^d$ by the integration over $\dom$).
\end{proof}	
Using that $\bm\in L^\infty(0,T;L^2(\dom))$, we can use the previous lemma to prove the following special case:
\begin{lemma}
	\label{lem:transport2}
	Let $\bm\in L^\infty(0,T;L^2(\dom))$ be a distributional solution of
	\begin{equation*}
	\partial_t \bm +\left(\bu\cdot\Grad\right)\bm = \bw\times\bm -\frac{1}{\tau}(\bm-\kappa_0\bh),
	\end{equation*} 
	for given $\bu\in L^\infty(0,T;\hdiv)\cap L^2(0,T;\V)$, $\bw\in L^\infty(0,T;L^2(\dom))\cap L^2(0,T;H^1_0(\dom))$, $\bh\in L^\infty(0,T;L^2(\dom))$ and initial data $\bm_0\in L^2(\dom)$. Then $\bm$ satisfies for almost any $t\in [0,T]$,
	\begin{equation}
	\label{eq:energyidm}
	\frac{1}{2}\int_{\dom}|\bm(t,x)|^2 dx = \frac{1}{2}\int_{\dom}|\bm_0|^2 dx -\frac{1}{\tau}\int_0^t\int_{\dom}(|\bm|^2-\kappa_0 \bh\cdot\bm) dx ds.
	\end{equation}
\end{lemma}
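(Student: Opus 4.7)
The plan is to invoke the renormalization identity from Lemma~\ref{lem:transport} componentwise with $b(s)=s^2/2$, to test it against the constant function $1$ on $\dom$, sum over components, and exploit the pointwise cancellation $\bm\cdot(\bw\times\bm)=0$ to kill the gyromagnetic contribution. Since $s\mapsto s^2/2$ has unbounded derivative and is therefore not directly admissible in Lemma~\ref{lem:transport}, the first step is to introduce a family of admissible truncations.

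For $k\in\N$, I would set $T_k(r):=\max(-k,\min(k,r))$ and $b_k(s):=\int_0^s T_k(r)\,dr\in C^1(\R)$, so that $b_k(0)=0$, $|b_k'|=|T_k|\le k$ is bounded (hence admissible in Lemma~\ref{lem:transport}), while $b_k(s)\to s^2/2$ and $b_k'(s)\to s$ pointwise with the uniform bounds $|b_k(s)|\le s^2/2$, $|b_k'(s)|\le |s|$. Testing the identity \eqref{eq:neededlater} (with $b=b_k$) against $\psi=\psi_R\in C^\infty_c(\R^d)$ chosen so that $\psi_R\equiv 1$ on a neighborhood of $\overline{\dom}$ is admissible after the zero extensions used in the proof of Lemma~\ref{lem:transport}, and since $\Grad\psi_R$ vanishes on the support of $\bu$ the convective term drops. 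Summing the resulting identities over $i=1,2,3$ yields
\begin{equation*}
\int_{\dom}\sum_i b_k(\mi(t))\,dx -\int_{\dom}\sum_i b_k(\mi(0))\,dx = \int_0^t\!\!\int_{\dom}\sum_i b_k'(\mi)(\bw\times\bm)^{(i)}\,dx\,ds - \tfrac{1}{\tau}\int_0^t\!\!\int_{\dom}\sum_i b_k'(\mi)(\mi-\kappa_0\hi)\,dx\,ds,
\end{equation*}
the identity to which the limit $k\to\infty$ is to be applied. The left-hand side and the relaxation term on the right succumb to dominated convergence: the pointwise limits are $\tfrac12|\bm(t)|^2$, $\tfrac12|\bm_0|^2$ and $|\bm|^2-\kappa_0\bh\cdot\bm$, with the $L^1$ majorants $\tfrac12|\bm|^2$ and $|\bm|\bigl(|\bm|+\kappa_0|\bh|\bigr)$ provided by the hypothesis $\bm,\bh\in L^\infty(0,T;L^2(\dom))$.

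The remaining---and genuinely delicate---step is to show that the gyromagnetic term tends to zero. Its pointwise limit is the scalar triple product $\bm\cdot(\bw\times\bm)\equiv 0$, but the natural majorant $|\bw||\bm|^2$ is not obviously integrable under the assumed regularity, so dominated convergence cannot be applied directly. The idea is to exploit the cancellation \emph{before} passing to the limit: letting $T_k(\bm)$ denote componentwise truncation, one has
\begin{equation*}
\sum_i b_k'(\mi)(\bw\times\bm)^{(i)} = \bw\cdot\bigl(\bm\times T_k(\bm)\bigr) = \bw\cdot\bigl(\bm\times(T_k(\bm)-\bm)\bigr),
\end{equation*}
using $\bm\times\bm=0$. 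The factor $T_k(\bm)-\bm$ is supported on $\{\max_i|\mi|>k\}$---a set whose measure shrinks like $k^{-2}$ by Chebyshev---and is pointwise dominated by $|\bm|$, so $\|T_k(\bm)-\bm\|_{L^2(\dom)}\to 0$ uniformly in $t$ by dominated convergence in space. Combining this tail decay with H\"older's inequality and the Sobolev embedding $H^1_0(\dom)\hookrightarrow L^6(\dom)$ for $\bw$ drives the gyromagnetic integral to zero. Collecting the three limits then yields \eqref{eq:energyidm}. The first three limits are routine; the tail estimate for the gyromagnetic term is the only genuinely delicate ingredient, and is where the low regularity of $\bm$ bites hardest.
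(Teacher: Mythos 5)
Your overall plan — renormalize with a truncation $b_k$, discard the convective term with a test function identically $1$ on $\overline\dom$, then exploit $\bm\cdot(\bw\times\bm)=0$ — is the right spirit, and the first three limiting steps (dominated convergence for the $L^2$-norms and the relaxation term) are unobjectionable. The proof breaks down, however, at exactly the point you flag as delicate, and the gap cannot be closed with the available regularity.

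The difficulty is that the \emph{componentwise} truncation $T_k$ does not produce a gradient parallel to $\bm$, so the gyromagnetic term does not cancel pointwise; after your rewriting you must estimate
\begin{equation*}
\int_0^t\!\!\int_{\dom}\bigl|\bw\bigr|\,\bigl|\bm\bigr|\,\bigl|T_k(\bm)-\bm\bigr|\,dx\,ds.
\end{equation*}
Under the hypotheses, $\bw\in L^2(0,T;L^6(\dom))$ by Sobolev embedding, while $\bm$ and $T_k(\bm)-\bm$ live in no better than $L^\infty(0,T;L^2(\dom))$. No H\"older triple closes: $\frac16+\frac12+\frac12=\frac76>1$. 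Using the cruder bound $|\bm\times T_k(\bm)|\le \sqrt{d}\,k\,|\bm|\,\mathbbm{1}_{\{|\bm|>k\}}$ makes things worse, because the explicit factor $k$ overwhelms the Chebyshev tail decay (one gets something of order $k^{1/3}$). In fact the pointwise majorant $|\bw||\bm|^2$ is not even known to be in $L^1([0,T]\times\dom)$ here, so dominated convergence cannot be invoked even in principle. Your auxiliary claim that $\|T_k(\bm)-\bm\|_{L^2(\dom)}\to 0$ uniformly in $t$ is also unwarranted: $\bm\in L^\infty(0,T;L^2(\dom))$ gives a.e.-in-$t$ tail decay but not the equi-integrability required for uniformity.

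The paper sidesteps all of this by choosing a different truncation, namely one whose derivative is \emph{parallel to $\bm$ by construction}: the vector
\begin{equation*}
\bigl((b_K^{(1)})'(m^{(1)}),\dots,(b_K^{(d)})'(m^{(d)})\bigr)^\top
= \bm\,\mathbbm{1}_{\{|\bm|\le K\}} + K\,\frac{\bm}{|\bm|}\,\mathbbm{1}_{\{|\bm|>K\}}
\end{equation*}
is always a nonnegative multiple of $\bm$ (it is the gradient of a \emph{radial} truncation of $|\bm|^2/2$, not a sum of scalar truncations applied to each component). Hence the gyromagnetic contribution $\sum_i (\bik)'(\mi)\,(\bw\times\bm)^{(i)}$ vanishes \emph{identically} at the truncated level — no estimate is needed, and in particular no integrability beyond what is available. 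This is exactly the cancellation you try to recover asymptotically; the paper builds it into the truncation, which is the only way the argument survives the low regularity of $\bm$. To repair your proof, replace $b_k(s)=\int_0^s T_k$ by this radial truncation of $|\cdot|^2/2$ (mollified if necessary, as in the paper) and the gyromagnetic term disappears outright.
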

\begin{proof}
	The starting point is identity \eqref{eq:neededlater}. We would like to use $b(y)=y^2/2$ as a test function there and sum the equations for the components $\mi$ over $i=1,2 (3)$. However, such a $b$ does not satisfy the assumptions of Lemma \ref{lem:transport} because it does not have bounded derivative which could potentially make the term involving $(\bw\times\bm)^{(i)}$ unbounded. Nonetheless, because the cross product is orthogonal to the two vectors involving it, that term would formally be zero when using $b'(\mi)=\mi$ after summing over $\mi$. This indicates that to prove rigorously that we can use $b(\mathbf{y})=\sum_{i=1}^d b(y^{(i)})=\sum_{i=1}^d  (y^{(i)})^2/2$ as a test function, we should use approximations of it with gradient parallel to $\bm$. Given a finite $0<K<\infty$, let $\bik$ be the function with $\bik(0)=0$ and derivative
	\begin{equation*}
	(\bik)'(\mi)=\mi\cdot \mathbbm{1}_{|\bm|\leq K} +K\frac{\mi}{|\bm|} \mathbbm{1}_{|\bm|>K}=\begin{cases}
	\mi,&\quad |\bm|\leq K,\\
	K\cdot\frac{\mi}{|\bm|},&\quad |\bm|>K, 
	\end{cases}
	\end{equation*}
	where $|\bm|=\sqrt{\sum_{i=1}^d (\mi)^2}$. The derivative of $\bik$ is obviously bounded by definition, however, it might be discontinuous. We can therefore convolve it with some smooth mollifier $\omega_\delta(x)=\delta^{-1}\omega(x/\delta)$ (as defined in Lemma \ref{lem:transport} but in $\R$ instead of $\R^d$) to make it smooth and see that \eqref{eq:neededlater} holds for the primitive of the regularized $(\bikd)':=(\bik)'\ast\omega_{\delta}$,
	\begin{multline*}
	\int_{\R^d}\bikd(\mi(t,x))\psi(x) dx - \int_{\R^d} \bikd(\mi(0,x))\psi(x) dx  -\int_0^t\int_{\R^d} \bikd(\mi)\bu\cdot \Grad\psi(x)  dx ds \\
	= \int_0^t\int_{\R^d}\left( (\bikd)'(\mi) (\bw\times\bm)^{(i)} - \frac{1}{\tau}(\bikd)'(\mi)\left(\mi-\kappa_0 \hi\right)\right)\psi dx ds.
	\end{multline*}
	Since $|\bikd(\mi)|\leq C (|\mi| +|\mi|^2\mathbbm{1}_{|\mi|\leq K})$ and $|(\bikd)'(\mi)|\leq C K$ uniformly in $\delta>0$, $\mi,\hi\in L^\infty(0,T;L^2(\dom))$, $\bu\in L^\infty(0,T;L^2(\dom))$ and $\bw\times\bm\in L^\gamma([0,T]\times\dom)$ for some $\gamma>1$ (using Sobolev embeddings for $\bw$), all the quantities in the last expression are uniformly integrable in $\delta>0$ and we can pass to $\delta\rightarrow 0$ to obtain, by the properties of convolution
	\begin{multline*}
	\int_{\R^d}\bik(\mi(t,x))\psi(x) dx - \int_{\R^d} \bik(\mi(0,x))\psi(x) dx  -\int_0^t\int_{\R^d} \bik(\mi)\bu\cdot \Grad\psi(x)  dx ds \\
	= \int_0^t\int_{\R^d}\left( (\bik)'(\mi) (\bw\times\bm)^{(i)} - \frac{1}{\tau}(\bik)'(\mi)\left(\mi-\kappa_0 \hi\right)\right)\psi dx ds.
	\end{multline*}
	Now, let us choose a smooth test function $\psi\in C^\infty_c(\R^d)$ that satisfies $\psi(x)\equiv 1$ inside $\dom$, which implies that the convective term vanishes and we are left with
	\begin{multline*}
	\int_{\dom}\bik(\mi(t,x)) dx - \int_{\dom} \bik(\mi(0,x)) dx  \\
	= \int_0^t\int_{\dom}\left( (\bik)'(\mi) (\bw\times\bm)^{(i)} - \frac{1}{\tau}(\bik)'(\mi)\left(\mi-\kappa_0 \hi\right)\right) dx ds.
	\end{multline*}
	Now sum these identities over $i=1,\dots, d$, define $\mathbf{b}_K(\bm)=\sum_{i=1}^d \bik(\mi)$ and notice that $((b_K^{(1)})', \dots, (b_K^{(d)})')^\top$ is parallel to $\bm$ or zero, which means that 
	\begin{equation*}
	\sum_{i=1}^d (\bik)'(\mi)(\bw\times\bm)^{(i)} = 0,\quad \text{a.e. } (t,x)\in [0,\infty)\times\R^d,
	\end{equation*} 
	and thanks to the integrability properties on $\bw$ and $\bm$ also in $L^1([0,\infty)\times\R^d)$. So,
	\begin{equation*}
	\int_{\dom}\mathbf{b}_K(\bm(t,x)) dx - \int_{\dom} \mathbf{b}_K(\bm_0) dx = -\frac{1}{\tau}\sum_{i=1}^d\int_0^t\int_{\dom} (\bik)'(\mi)\left(\mi-\kappa_0 \hi\right) dx ds.
	\end{equation*}
	All the quantities in the last identity are uniformly integrable with respect to $K>0$, hence we can pass $K\rightarrow\infty$, and get, since $\mathbf{b}_K(\bm)\rightarrow|\bm|^2/2$ and $((b_K^{(1)})', \dots, (b_K^{(d)})')^\top\rightarrow\bm$, as $K\rightarrow\infty$, equation \eqref{eq:energyidm}.
\end{proof}
\subsection{Passing to the limit in the approximating sequence}
We are now ready to prove existence of weak solutions of \eqref{seq:dtu}-\eqref{seq:divb}: 
\begin{proposition}
	\label{prop:strongconvm}
	Let $\{\sigma_n\}_{n\in\N}$ be a sequence of nonnegative parameters such that $\sigma_n\stackrel{n\to\infty}{\longrightarrow}0$ and denote $\sol_n=(\bu_n,\bw_n,\bm_n, \bh_n,\varphi_n):=\sol^{\sigma_n}=(\busn,\bwsn,\bmsn,\bhsn,\phisn)$ the solutions of~\eqref{eq:approxsys} with $\sigma=\sigma_n$. 
	Assume that the initial data $\bu_0,\bw_0$ and $\bm_{0}$ satisfy~\eqref{eq:init}, $\bh_a\in L^\infty(0,T;L^2(\dom))\cap H^1(0,T;L^2(\dom))$ with $\Div \bh_a=0$ a.e. and that the boundary conditions~\eqref{eq:bc1},~\eqref{eq:potentialbc} and~\eqref{eq:boundaryconditionsm} are satisfied.
	Then, as $n\to\infty$, a subsequence of $\{(\bu_n,\bw_n,\bm_n, \bh_n,\varphi_n)\}_{n\in\N}$ converges to a weak solution of \eqref{eq:rosenzweig} as in Definition \ref{def:weaksol}.
\end{proposition}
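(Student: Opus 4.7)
The plan is to pass to the limit $n\to\infty$ in the regularized system~\eqref{eq:approxsys} by combining weak compactness from the energy inequality, Aubin--Lions compactness for $\bu_n$ and $\bw_n$, and a DiPerna--Lions-type renormalization argument that produces strong $L^2$ compactness for $\bm_n$.

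First I would integrate~\eqref{eq:energyapprox} in time and apply Gr\"onwall's inequality to the forcing $\mu_0\int\partial_t\bh_a\cdot\bhsn$, extracting the $\sigma_n$-uniform bounds $\bu_n,\bw_n\in L^\infty(0,T;L^2(\dom))\cap L^2(0,T;H^1(\dom))$, $\bm_n,\bh_n\in L^\infty(0,T;L^2(\dom))$, and $\sqrt{\sigma_n}(\Curl\bm_n,\Div\bm_n)\in L^2([0,T]\times\dom)$. Banach--Alaoglu then provides a (not relabelled) subsequence with weak-$\star$ limits $(\bu,\bw,\bm,\bh)$; the regularizer $\sigma_n\Delta\bm_n$ vanishes in any weak formulation tested against smooth functions since $\sigma_n\int\Curl\bm_n\cdot\Curl\psi=\sqrt{\sigma_n}\bigl(\sqrt{\sigma_n}\Curl\bm_n,\Curl\psi\bigr)\to 0$, and the elliptic problem~\eqref{eq:potential1}--\eqref{eq:potentialbc} is inherited by $\bh=\Grad\varphi$ via the linearity of $\bm\mapsto\varphi$. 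Strong $L^2$ convergence of $\bu_n$ and $\bw_n$ then follows from Aubin--Lions: the spatial $H^1$ control is in hand, and time-derivative bounds are read off the weak formulations~\eqref{eq:weakapproxu}--\eqref{eq:weakapproxw}, with the a~priori singular Kelvin force controlled via Lemma~\ref{lem:integrableweaksol}, which rewrites it as $\Div((\bm_n+\bh_n)\otimes\bh_n)-\tfrac{1}{2}\Grad|\bh_n|^2$, a distribution bounded in $L^\infty(0,T;W^{-1,1}(\dom))$.

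The hard part is obtaining strong $L^2$ compactness for $\bm_n$ in the absence of any spatial regularity beyond $L^2$, which rules out a direct application of Aubin--Lions. Inspired by DiPerna--Lions theory, I would use the strong convergences of $\bu_n,\bw_n$ together with the vanishing of the $\sigma_n$-term to pass to the limit in~\eqref{eq:weakapproxm} and conclude that $\bm$ is a distributional solution of~\eqref{eq:mtransport}; Lemma~\ref{lem:transport2} then promotes this into the \emph{identity}~\eqref{eq:energyidm} for $\bm$. On the approximate side only the inequality~\eqref{eq:mineq} is available, but testing the Neumann problem~\eqref{eq:ellipticTorrey} against $\varphi_n$ produces the symmetrizing identity
\begin{equation*}
\int_{\dom}|\bh_n|^2\,dx+\int_{\dom}\bm_n\cdot\bh_n\,dx = \int_{\dom}\bh_a\cdot\bh_n\,dx,
\end{equation*}
which after substitution into~\eqref{eq:mineq} rearranges all sign-indefinite quantities onto the right-hand side and leaves a left-hand side that is convex in $(\bm_n,\bh_n)$, hence weakly lower semicontinuous. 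Passing to $n\to\infty$ in this inequality and comparing the result with the \emph{identity} for $\bm$ (obtained by combining~\eqref{eq:energyidm} with the analogous elliptic identity for the limit $\bh$) forces every lower-semicontinuity inequality to be an equality; in particular $\|\bm_n\|_{L^2}\to\|\bm\|_{L^2}$ and $\|\bh_n\|_{L^2}\to\|\bh\|_{L^2}$, which together with the weak convergences upgrades them to strong convergences in $L^2([0,T]\times\dom)$.

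With strong $L^2$ convergence of all four fields in hand, every nonlinear term in~\eqref{eq:weakapproxu}--\eqref{eq:weakapproxm} passes to the limit, including the Kelvin force expressed through~\eqref{eq:mhidentitydistributions}. Finally, the energy inequality~\eqref{eq:energy0} for the limit follows from~\eqref{eq:energyapprox} by discarding the non-negative $\sigma_n$-contribution and invoking weak lower semicontinuity of the convex dissipation functional $D(\sol)$ and of the kinetic and magnetic energies, completing the verification that $(\bu,\bw,\bm,\bh)$ is a weak solution in the sense of Definition~\ref{def:weaksol}.
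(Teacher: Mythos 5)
Your proposal is correct and follows essentially the same strategy as the paper's proof: uniform bounds from the energy inequality, Aubin--Lions compactness for $\bu_n$ and $\bw_n$, renormalization (Lemma~\ref{lem:transport2}) to produce an $L^2$-identity for the weak limit $\bm$, and the magnetostatic structure to absorb the sign-indefinite cross-term $\bm\cdot\bh$ so that weak lower semicontinuity can be applied.

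The one genuine stylistic difference is in how the cross-term is eliminated. The paper first passes to the limit in~\eqref{eq:mineq}, obtaining an inequality involving the weak-$\star$ limits of quadratics $\overline{|\bm|^2}$, $\overline{\bh\cdot\bm}$, and then uses the comparison identity~\eqref{eq:mhprod} (derived by testing both the approximate and the limiting potential equation against $\varphi_n$ and $\varphi$) to rewrite $\overline{\bh\cdot\bm}-\bh\cdot\bm$ as $-(\overline{|\bh|^2}-|\bh|^2)$, concluding via the pointwise a.e.\ dominance $|v|^2\leq\overline{|v|^2}$ from~\cite[Corollary 3.33]{Novotny2004}. You instead substitute the elliptic energy identity $\int_{\dom}|\bh_n|^2+\int_{\dom}\bm_n\cdot\bh_n=\int_{\dom}\bh_a\cdot\bh_n$ \emph{before} passing to the limit, which turns the right-hand side of~\eqref{eq:mineq} into a linear functional of $\bh_n$ and isolates a manifestly convex left-hand side, then argues via liminf/limsup of the $L^2$-norms. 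This is the same algebraic manipulation performed at a different stage, and it buys a slightly more elementary conclusion: you can work directly with norm convergence rather than invoking weak limits of quadratics. Either way, one should be a bit careful to upgrade the resulting $\liminf$-equalities to genuine limits along the chosen subsequence; this is a minor bookkeeping point that works out in both versions.

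Both versions rely on the same crucial observations: that the weak limit $\bm$ solves~\eqref{eq:mtransport} (so that Lemma~\ref{lem:transport2} applies, requiring the prior strong $L^2$-convergence of $\bw_n$), and that the magnetostatic relation between $\bh$ and $\bm$ makes the cross-term tractable. You have captured both.
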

\begin{proof}
	From the energy estimate~\eqref{eq:energyapprox}, we obtain that $\sol_n$ satisfies, uniformly in $n$,
	\begin{align}
	\label{eq:uniformbounds}
	\begin{split}
	&\{\bu_n\}_{n\in\N}\subset L^\infty(0,T; L^2_{\text{div}}(\dom))\cap L^2(0,T;H^1_{\text{div}}(\dom)),\\
	&\{\bw_n\}_{n\in\N} \subset L^\infty(0,T; L^2(\dom))\cap L^2(0,T;H^1(\dom)),\\
	&\{\bm_n\}_{n\in\N}\subset  L^\infty(0,T; L^2(\dom)),\\
	&\{\bh_n\}_{n\in\N} \subset L^\infty(0,T; L^2(\dom)),\\
	&\{\varphi_n\}_{n\in\N}\subset L^\infty(0,T; H^1(\dom)),\\
	&\{\sqrt{\sigma_n}\Curl\bm_n\}_{n\in\N},\, \{\sqrt{\sigma_n}\Div\bm_n\}_{n\in\N}\subset L^2([0,T]\times\dom),
	\end{split}
	\end{align}	
	
	Hence, from the Banach-Alaoglu theorem, we obtain that $(\bu_n,\bw_n,\bm_n, \bh_n,\varphi_n)_{n\in\N}$ has a weak$^*$ convergent subsequence, which we denote by $n$ again for convenience, such that as $n\rightarrow\infty$,
	\begin{align}\label{eq:weakconv}
	\begin{split}
	\bu_n\weakstar \bu,&\quad \text{in } L^\infty(0,T; L^2(\dom))\cap L^2(0,T;H^1(\dom)),\\
	\bw_n\weakstar \bw,&\quad \text{in } L^\infty(0,T; L^2(\dom))\cap L^2(0,T;H^1(\dom)),\\
	\bm_n\weakstar \bm,&\quad \text{in } L^\infty(0,T; L^2(\dom)),\\
	\bh_n\weakstar \bh,&\quad \text{in } L^\infty(0,T; L^2(\dom)),\\
	\varphi_n\weakstar \varphi,&\quad \text{in } L^\infty(0,T; H^1(\dom)),	
	\end{split}
	\end{align}	
	where the limits satisfy
	\begin{equation*}
	\begin{split}
	&\norm{\bu}_{L^\infty([0,T];L^2(\dom))\cap L^2([0,T];H^1(\dom))}\leq C,\quad \norm{\bw}_{L^\infty([0,T];L^2(\dom))\cap L^2([0,T];H^1(\dom))}\leq C,\\ &\norm{\bm}_{L^\infty([0,T];L^2(\dom))}\leq C, \quad \norm{\varphi}_{L^\infty([0,T];H^1(\dom))}\leq C, \quad
	\norm{\bh}_{L^\infty([0,T];L^2(\dom))}\leq C.
	\end{split}
	\end{equation*}
		Combining the equations for $\bw_n$ and $\bm_n$, \eqref{seq:dtwa} and \eqref{seq:dtma} respectively, with the uniform bounds~\eqref{eq:uniformbounds}, we obtain by Sobolev embeddings that
	\begin{equation}\label{eq:timederconv}
	\{\partial_t\bw_n\}_{n\in\N}, \{\partial_t\bm_n\}_{n\in\N}\subset L^p(0,T;W^{-k,q}(\dom)),
	\end{equation}
	for $p,q>1$ and some $k>0$ large enough. Combining the equation~\eqref{seq:dtua} with the uniform bounds~\eqref{eq:uniformbounds}, we obtain for the velocity field $\bu_n$ 
	\begin{equation}
	\label{eq:timederconv2}
	\{\partial_t\bu_n\}_{n\in \N} 
	\subset L^2(0,T;H^{-s}_{\text{div}}(\dom)),\quad \text{for}\, s\geq 1\, \text{ large enough,}
	\end{equation}
	where $H^{-s}_{\text{div}}(\dom)$ denotes the dual space of $H^s_{\text{div}}(\dom):=H^1_{\text{div}}(\dom)\cap H^s(\dom)$, $s\geq 1$, the space of divergence free functions in $H^s(\dom)$ with vanishing trace.
	Since $H^1(\dom)\stackrel{c}{\hookrightarrow} L^2(\dom)\hookrightarrow W^{-k,p}(\dom)$ and $H^1_{\text{div}}(\dom)\stackrel{c}{\hookrightarrow}L^2_{\text{div}}(\dom)\hookrightarrow H^{-s}_{\text{div}}(\dom)$, where the first embeddings are compact and the second ones continuous, we can use Aubin-Lions' Lemma~\cite{Simon1986} to obtain
	\begin{equation}\label{eq:strongconvuw}
	\bu_n\rightarrow \bu,\quad \bw_n\rightarrow\bw,\quad \text{ in } L^2([0,T]\times\dom),
	\end{equation}
	as $n\rightarrow \infty$ up to a subsequence.
	We use these convergence properties~\eqref{eq:weakconv} and~\eqref{eq:strongconvuw}
	to pass to the limit in the weak formulations of \eqref{seq:dtua}, \eqref{seq:dtwa} and \eqref{seq:dtma}:
	\begin{align*}
	0& =\lint \left[\bu_n\cdot\partial_{t}\psi_1+((\bu_n\cdot\Grad)\psi_1)\cdot\bu_n -(\nu+\nu_r)\Grad\bu_n:\Grad \psi_1 \right]dx dt + \int_{\dom} \bu_{0}\cdot\psi_1(0,x) dx\\ &\qquad+\lint\left[2\nu_r \bw_n\cdot\Curl\psi_1 -\mu_0\left(((\bm_n+\bh_n)\cdot\Grad)\psi_1\right)\cdot\bh_n \right] dx dt\\
	&\quad\xrightarrow{n\rightarrow\infty}  \quad\lint \left[\bu\cdot\partial_t\psi_1+((\bu\cdot\Grad)\psi_1)\cdot\bu -(\nu+\nu_r)\Grad\bu:\Grad \psi_1 \right]dx dt + \int_{\dom} \bu_0\cdot\psi_1(0,x) dx\\ 
	&\hphantom{\quad\xrightarrow{n\rightarrow\infty}  \quad}\qquad+\lint\left[2\nu_r \bw\cdot\Curl\psi_1 -\mu_0\overline{\left(((\bm+\bh)\cdot\Grad)\psi_1\right)\cdot\bh}\right] dx dt
	\end{align*}
	where we denoted by $\overline{\left(((\bm+\bh)\cdot\Grad)\psi_1\right)\cdot\bh}$ the weak limit of $\left(((\bm_n+\bh_n)\cdot\Grad)\psi_1\right)\cdot\bh_n$,
	\begin{equation*}
	0=\int_{\dom}\bu_n(t,x)\cdot\Grad\psi_2(t,x) dx \quad\xrightarrow{n\rightarrow\infty}\quad \int_{\dom}\bu(t,x)\cdot\Grad\psi_2(t,x) dx,
	\end{equation*}
	\begin{align*}
	0&=\lint\left[\bw_n\cdot\partial_{t}\psi_3+((\bu_n\cdot\Grad)\psi_3)\cdot\bw_n
	-c_1\Grad\bw_n:\Grad\psi_3 - c_2\Div\bw_n\, \Div\psi_3
	\right]dx dt \\
	&\qquad +\lint\left[  -4\nu_r\bw_n\cdot\psi_3+2\nu_r \bu_n\cdot\Curl\psi_3 +\mu_0\left(\bm_n\times\bh_n\right)\cdot\psi_3\right] dx dt + \int_{\dom} \bw_{0}\cdot\psi_3(0,x) dx\\
	&\quad\xrightarrow{n\rightarrow\infty}\quad \lint\left[\bw\cdot\partial_t\psi_3+((\bu\cdot\Grad)\psi_3)\cdot\bw
	-c_1\Grad\bw:\Grad\psi_3 - c_2\Div\bw\, \Div\psi_3
	\right]dx dt  \\
	&\hphantom{\quad\xrightarrow{n\rightarrow\infty}}\qquad +\lint\left[  -4\nu_r\bw\cdot\psi_3+2\nu_r \bu\cdot\Curl\psi_3 +\mu_0\left(\overline{\bm\times\bh}\right)\cdot\psi_3\right] dx dt+ \int_{\dom} \bw_0\cdot\psi_3(0,x) dx.
	\end{align*}
	Note that  we denoted the weak limit of $\bm_n\times\bh_n$ by $\overline{\bm\times\bh}$, and 
	\begin{align*}
	0&=\lint \left[\bm_n\cdot\partial_{t}\psi_4+(\bu_n\cdot\Grad)\psi_4\cdot\bm_n
	-\sigma_n\Curl\bm_n\cdot\Curl\psi_4-\sigma_n\Div\bm_n\Div\psi_4\right]dx dt \\ &\qquad+ \int_{\dom} \bm_{0}\cdot\psi_4(0,x) dx+\lint\left[(\bw_n\times\bm_n)\cdot\psi_4-\frac{1}{\tau}(\bm_n-\kappa_0\bh_n)\cdot\psi_4\right] dx dt\\
	&\quad\xrightarrow{n\rightarrow\infty}\quad \lint \left[\bm\cdot\partial_t\psi_4+(\bu\cdot\Grad)\psi_4\cdot\bm  \right]dx dt + \int_{\dom} \bm_0\cdot\psi_4(0,x) dx\\ &\hphantom{\quad\xrightarrow{n\rightarrow\infty}}\qquad+\lint\left[(\bw\times\bm)\cdot\psi_4-\frac{1}{\tau}(\bm-\kappa_0\bh)\cdot\psi_4\right] dx dt.
	\end{align*}
	Passing to the limit in the equation for $\varphi_n$,~\eqref{seq:phiapprox}, and the equation for $\bh_n$, $\bh_n=\Grad\varphi_n$, we obtain that $\bh=\Grad\varphi$ in $L^2(\dom)$ and that $\varphi$ satisfies the equation
	\begin{equation}\label{eq:weakphi}
	\int_{\dom}\Grad\varphi\cdot\Grad\psi dx = \int_{\dom}\bh\cdot\Grad\psi dx = \int_{\dom}(\bh_a-\bm)\cdot\Grad\psi dx 
	\end{equation}
	for any test function $\psi\in H^1(\dom)$. In essence, to arrive at the conclusion that $(\bu,\bw,\bm,\bh,\varphi)$ is a weak solution of \eqref{eq:rosenzweig}, we need to show that the weak limits
	\begin{equation}\label{eq:weakstrong}
	\overline{\left(((\bm+\bh)\cdot\Grad)\psi_1\right)\cdot\bh}= {\left(((\bm+\bh)\cdot\Grad)\psi_1\right)\cdot\bh},\quad\text{and}\quad \overline{\bm\times\bh} = {\bm\times\bh},
	\end{equation}
	on $[0,T]\times\dom$ in the sense of distribution.
	This can be achieved by showing that either a subsequence of $\{\bm_n\}_{n\in\N}$ or $\{\bh_n\}_{n\in\N}$ converges strongly in $L^2([0,T]\times\dom)$. We will show in the following that the sequence $\{\bm_n\}_{n\in\N}$ converges strongly (which will imply that also $\{\bh_n\}_{n\in\N}$ converges strongly in $L^2$).
	
	First we notice that we can use $\varphi$ as a test function in \eqref{eq:weakphi}, since it is in $L^\infty(0,T;H^1(\dom))$. This yields
	\begin{equation}	\label{eq:phiid1}
	\int_{\dom}|\Grad\varphi|^2 dx = \int_{\dom}|\bh|^2 dx = \int_{\dom}(\bh_a-\bm)\cdot\Grad\varphi dx= \int_{\dom}(\bh_a-\bm)\cdot\bh\, dx,\quad \text{a.e. } t. 
	\end{equation}
	where we have substituted $\bh$ for $\Grad\varphi$. We can also use $\varphi_n$ as a test function at the level of approximation \eqref{seq:phiapprox}:
	\begin{equation*}
	\int_{\dom}|\Grad\varphi_n|^2 dx = \int_{\dom}|\bh_n|^2 dx = \int_{\dom}(\bh_a-\bm_n)\cdot\Grad\varphi_n dx= \int_{\dom}(\bh_a-\bm_n)\cdot\bh_n  dx. 
	\end{equation*}
	Passing to the limit $n\rightarrow\infty$ in this last expression, we get
	\begin{equation}\label{eq:phiid2}
	\int_{\dom}\overline{|\Grad\varphi|^2} dx = \int_{\dom}\overline{|\bh|^2} dx = \int_{\dom}\left(\bh_a\cdot \Grad\varphi-\overline{\bm\cdot\Grad\varphi}\right) dx= \int_{\dom}\left(\bh_a\cdot \bh-\overline{\bm\cdot\bh}\right) dx, \quad \text{a.e. } t
	\end{equation}
	where $\overline{|\Grad\varphi|^2}$ and $\overline{|\bh|^2}$ denote the weak limits of the sequences $|\Grad\varphi_n|^2$ and $|\bh_n|^2$ respectively.
	Combining \eqref{eq:phiid1} and \eqref{eq:phiid2}, and rearranging (the terms containing $\int_{\dom}\bh_a\cdot\bh dx$ are identical), we get the expression
	\begin{equation}
	\label{eq:mhprod}
	\int_{\dom}\left(|\bh|^2  +\bm\cdot\bh\right) dx = \int_{\dom}\left(\overline{|\bh|^2}+\overline{\bm\cdot\bh}\right) dx.
	\end{equation}
	This looks not very exciting at first sight, but we will see later how it is useful.
Next, we note that from~\eqref{eq:mineq}, we have,
	\begin{equation*}
	\frac{1}{2}\int_{\dom}|\bm_n(t,x)|^2 dx \leq \frac{1}{2}\int_{\dom}|\bm_{0}|^2 dx -\frac{1}{\tau}\int_0^{t}\int_{\dom}\left(|\bm_n|^2 - \kappa_0\bh_n\cdot \bm_n\right) dx ds.
	\end{equation*}
	As $n\rightarrow\infty$, this converges to
	\begin{equation}\label{eq:limitenergyineq}
	\frac{1}{2}\int_{\dom}\overline{|\bm(t,x)|^2} dx \leq \frac{1}{2}\int_{\dom}|\bm_{0}|^2 dx -\frac{1}{\tau}\int_0^t\int_{\dom}\left(\overline{|\bm|^2} - \kappa_0\overline{\bh\cdot \bm}\right) dx ds,
	\end{equation}
	where we denoted the weak limit of $|\bm_n|^2$ by $\overline{|\bm|^2}$.
	By Lemma \ref{lem:transport2}, the weak limit $\bm$ satisfies
	\begin{equation*}
	\frac{1}{2}\int_{\dom}|\bm(t,x)|^2 dx = \frac{1}{2}\int_{\dom}|\bm_0|^2 dx -\frac{1}{\tau}\int_0^t\int_{\dom}(|\bm|^2-\kappa_0 \bh\cdot\bm) dx ds
	\end{equation*}
	Subtracting this identity from \eqref{eq:limitenergyineq}, we get
	\begin{equation*}
	\frac{1}{2}\int_{\dom}\left(\overline{|\bm(t,x)|^2} -|\bm(t,x)|^2\right)dx \leq  -\frac{1}{\tau}\int_0^t\int_{\dom}\left(\overline{|\bm|^2}-|\bm|^2 - \kappa_0\left(\overline{\bh\cdot \bm}-\bh\cdot\bm\right)\right) dx ds.
	\end{equation*}
	We replace $\overline{\bh\cdot \bm}-\bh\cdot\bm$ using \eqref{eq:mhprod}:
	\begin{equation}\label{eq:whatever}
	\frac{1}{2}\int_{\dom}\left(\overline{|\bm(t,x)|^2} -|\bm(t,x)|^2\right)dx \leq  -\frac{1}{\tau}\int_0^t\int_{\dom}\left(\overline{|\bm|^2}-|\bm|^2 + \kappa_0\left(\overline{|\bh|^2}-|\bh|^2\right)\right) dx ds.
	\end{equation}
	By \cite[Corollary 3.33]{Novotny2004},
	\begin{equation}\label{eq:novotny}
	\int_{G}|\mathbf{v}|^2 dx \leq \liminf_{n\rightarrow\infty}\int_{G}|\mathbf{v}_n|^2 dx\leq \int_{G}\overline{|\mathbf{v}|^2} dx,\quad \text{and}\quad |\mathbf{v}|^2\leq \overline{|\mathbf{v}|^2}\,\text{a.e. on } G, 
	\end{equation}
	$\mathbf{v}\in\{\bm,\bh\}$, $G\in\{\dom,[0,T]\times\dom\}$, therefore  the left hand side of \eqref{eq:whatever} is nonnegative while the right hand side is nonpositive. Hence we have
	\begin{equation*}
	\frac{1}{2}\int_{\dom}\left(\overline{|\bm(t,x)|^2} -|\bm(t,x)|^2\right)dx  = 0.
	\end{equation*}
	Thus, by \eqref{eq:novotny}, $\overline{|\bm(t,x)|^2} = |\bm(t,x)|^2$ for almost every $(t,x)\in[0,T]\times\dom$. By~\cite[Theorem 1.1.1 (iii)]{Evans1990}, this implies that $\bm_n\rightarrow\bm$ strongly in $L^2([0,T]\times\dom)$. Using this in \eqref{eq:mhprod}, we see that also
	\begin{equation*}
	\int_0^T\int_{\dom}|\bh|^2 dxdt = \int_0^T\int_{\dom}\overline{|\bh|^2} dxdt, 
	\end{equation*}
	and thus also $\bh_n\rightarrow\bh$ strongly in $L^2([0,T]\times\dom)$. This allows us to conclude that \eqref{eq:weakstrong} holds (and hence point (iii) in Definition~\ref{def:weaksol}).
	The energy inequality~\eqref{eq:energy0} follows from passing to the limit in the energy inequality for the approximations $\{(\bu_n,\bw_n,\bm_n,\bh_n)\}_n$.
	The weak time continuity of $(\bu,\bw,\bm)$ follows from~\eqref{eq:timederconv} combined with~\cite[Lemma 1.4, Chapter III]{Temam2001}. To see that $\bh$ is weakly continuous in time, we first note that by elliptic regularity (see e.g. \cite[Theorem 6.33]{Folland2001}), since $\phi_t$ solves $\Delta\phi_t = -\Div \bm_t$,
	\begin{equation*}
	\norm{\phi_t}_{W^{p,s+1}}\leq C \norm{\bm_t}_{W^{p,s}},
	\end{equation*}
	for some $s<0$ and hence since $\bh_t = \Grad\phi_t$, we obtain
	\begin{equation*}
	\norm{\bh_t}_{W^{p,s}} \leq C\norm{\bm_t}_{W^{p,s}}\leq C,
	\end{equation*}  
	for some $s<0$ small enough. This combined with $\bh\in L^\infty(0,T;L^2(\dom))$ yields $\bh\in C_w([0,T];L^2(\dom))$ by~\cite[Lemma 1.4, Chapter III]{Temam2001}.
	
\end{proof}

\section{Convergence to the equilibrium $\tau\to 0$}\label{S6}
We now proceed to proving Theorems~\ref{thm:convequilibriumweaksol} and~\ref{thm:convergencerate}. For convenience, we recall the statement of Theorem~\ref{thm:convequilibriumweaksol}:
\begin{theorem}
	Denote $\sole:=(\bue,\bwe,\bme,\bhe)$, $\bhe=\Grad\vfe$ a weak solution of system~\eqref{eq:rosenzweig} for a given $\tau>0$. Then as $\tau\to 0$, a subsequence of $\{\sole\}_{\tau>0}$ converges in $L^2([0,T]\times\dom)$ to a weak solution of~\eqref{eq:limitsys}.  
\end{theorem}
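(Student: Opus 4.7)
The plan is to pass to the limit $\tau\to 0$ in the weak solutions $\sole=(\bue,\bwe,\bme,\bhe)$, following the compactness template of Proposition~\ref{prop:strongconvm}. Applying the energy inequality~\eqref{eq:energy0}--\eqref{eq:defD} uniformly in $\tau$ produces, up to a subsequence,
\begin{equation*}
\bue,\bwe\in L^\infty(0,T;L^2(\dom))\cap L^2(0,T;H^1(\dom)),\qquad \bme,\bhe\in L^\infty(0,T;L^2(\dom)),
\end{equation*}
and, crucially, the dissipation term $\frac{\mu_0}{\tau\kappa_0}|\bme-\kappa_0\bhe|^2$ in~\eqref{eq:defD} gives the quantitative estimate $\norm{\bme-\kappa_0\bhe}_{L^2([0,T]\times\dom)}^2\le C\tau$. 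Banach-Alaoglu supplies weak-$*$ limits $\bU,\bW,\bM,\bH$ along a subsequence, and this last bound immediately yields $\bme-\kappa_0\bhe\to 0$ strongly in $L^2([0,T]\times\dom)$, hence the limiting constitutive law $\bM=\kappa_0\bH$~\eqref{seq:ML} a.e.

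Strong $L^2([0,T]\times\dom)$ convergence of $\bue,\bwe$ is obtained by Aubin-Lions, after bounding $\partial_t\bue,\partial_t\bwe$ in suitable negative-order Sobolev spaces uniformly in $\tau$, exactly as in Proposition~\ref{prop:strongconvm}; the Kelvin force is handled via Lemma~\ref{lem:integrableweaksol}. The main obstacle is strong $L^2$ convergence of $\bhe$ (and thus of $\bme$), which is needed to pass to the limit in the quadratic magnetic terms $\Div((\bme+\bhe)\otimes\bhe)-\tfrac12\Grad|\bhe|^2$ and $\bme\times\bhe$, and which is not readily extracted from Aubin-Lions because the term $\tau^{-1}(\bme-\kappa_0\bhe)$ in~\eqref{seq:dtm} is unbounded in $L^2$.

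For this I adapt the identity trick of Proposition~\ref{prop:strongconvm}. Testing the magnetostatics problem with the potential $\vfe$ gives
\begin{equation*}
\int_\dom|\bhe|^2\,dx=\int_\dom(\bh_a-\bme)\cdot\bhe\,dx,\quad \text{a.e. } t.
\end{equation*}
Splitting $\bme\cdot\bhe=(\bme-\kappa_0\bhe)\cdot\bhe+\kappa_0|\bhe|^2$ and using the strong-to-zero convergence of $\bme-\kappa_0\bhe$ against the weakly-$L^2$-convergent $\bhe$ forces $\overline{\bme\cdot\bhe}=\kappa_0\overline{|\bhe|^2}$, where the bar denotes the weak-$*$ limit; passing to the limit yields
\begin{equation*}
(1+\kappa_0)\int_\dom\overline{|\bhe|^2}(t)\,dx=\int_\dom\bh_a\cdot\bH\,dx,\quad \text{a.e. } t.
\end{equation*}
Testing the limiting magnetostatics problem (arising from $\bM=\kappa_0\bH$, $\Curl\bH=0$, $\Div(\bH+\bM)=0$) with the corresponding potential $\Phi$, $\bH=\Grad\Phi$, reproduces the same identity with $\overline{|\bhe|^2}$ replaced by $|\bH|^2$. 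Comparing the two and invoking the lower semicontinuity $|\bH|^2\le\overline{|\bhe|^2}$ a.e.~(cf.~\eqref{eq:novotny}) forces $\overline{|\bhe|^2}=|\bH|^2$ a.e., so that $\bhe\to\bH$ strongly in $L^2([0,T]\times\dom)$, and strong convergence $\bme\to\bM$ then follows from $\bme-\kappa_0\bhe\to 0$.

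With strong $L^2$ convergence of all four components in hand, passing to the limit in the weak formulations of~\eqref{seq:dtu},~\eqref{seq:divu},~\eqref{seq:dtw},~\eqref{seq:curlh},~\eqref{seq:divb} is standard strong-weak algebra; in particular $\bme\times\bhe\to\bM\times\bH=\kappa_0\bH\times\bH=0$, consistently with the absence of the cross product in~\eqref{seq:dtwL}. The magnetization equation~\eqref{seq:dtm} itself does not admit a direct passage to the limit (the $\tau^{-1}(\bme-\kappa_0\bhe)$ term is unbounded in $L^2$), but in the limiting system~\eqref{eq:limitsys} it is replaced by the algebraic relation~\eqref{seq:ML}, which has already been extracted in the first step. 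This identifies the limit $(\bU,\bW,\bM,\bH)$ as a weak solution of~\eqref{eq:limitsys}.
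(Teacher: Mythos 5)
Your proposal is correct and follows essentially the same route as the paper's proof: uniform energy bounds (including the $\tau$-weighted dissipation), Aubin–Lions for $\bue,\bwe$, the constitutive law $\bM=\kappa_0\bH$ from the $O(\sqrt{\tau})$ estimate, and strong $L^2$ convergence of $\bhe$ extracted by testing the magnetostatics problem with the potentials $\vfe$ and $\Phi$ and comparing the resulting quadratic identities via weak lower semicontinuity. The only cosmetic differences are that you split $\bme\cdot\bhe=(\bme-\kappa_0\bhe)\cdot\bhe+\kappa_0|\bhe|^2$ before passing to the limit rather than after, and you state the key energy identity pointwise in $t$ rather than integrated over $[0,T]\times\dom$ as the paper does — the latter being safer since $\overline{|\bhe|^2}$ is only well-defined after pairing with an $L^1$ time test function, but this is an easy repair that does not affect the argument.
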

\begin{proof}	
	From the energy balance~\eqref{eq:energy0}, we get that the $\sole=(\bue,\bwe,\bme,\bhe)$ satisfy the following uniform bounds for given $\tau>0$, and fixed time horizon $T>0$,
	\begin{subequations}\label{eq:aprioritaueq}
		\begin{align}
		&\norm{\bue}_{L^\infty(0,T;L^2(\dom))} + \norm{\bue}_{L^2(0,T;H^1(\dom))}\leq C,\label{seq:aprioriueq}\\
		&\norm{\bwe}_{L^\infty(0,T;L^2(\dom))} + \norm{\bwe}_{L^2(0,T;H^1(\dom))}\leq C,\label{seq:aprioriweq}\\
		&\norm{\bhe}_{L^\infty(0,T;L^2(\dom))}\leq C,\label{seq:aprioriheq}\\
		&\norm{\vfe}_{L^\infty(0,T;H^1(\dom))}\leq C, \label{seq:aprioriphieq}\\
		&\norm{\bme}_{L^\infty(0,T;L^2(\dom))}\leq C,\label{seq:apriorimeq} \\
		&\tau^{-1/2}\norm{\bme-\kappa_0\bhe}_{L^2((0,T)\times \dom)}\leq C.\label{seq:apriorimheq}
		\end{align}
	\end{subequations}
	
	The first two bounds~\eqref{seq:aprioriueq} and \eqref{seq:aprioriweq} combined with~\eqref{eq:timederconv}, imply using the Aubin-Lions lemma
	\begin{equation*}
	\bue\rightarrow \bU, \quad \bwe\rightarrow\bW,\quad \text{in } L^p((0,T)\times \dom),\quad 1\leq p<6,
	\end{equation*}
	up to a subsequence, for some limiting functions $\bU,\bW\in L^\infty(0,T;L^2(\dom))\cap L^2(0,T;H^1(\dom))$.
	The third, forth and fifth bound~\eqref{seq:aprioriheq}, \eqref{seq:aprioriphieq} and~\eqref{seq:apriorimheq}, imply, using the Banach-Alaoglu theorem,
	\begin{equation}\label{eq:weakconvHM}
	\begin{split}
	\bme\weakstar\bM,\quad \bhe\weakstar\bH,&\quad \text{ weak* in } L^\infty(0,T;L^2(\dom)),\\
	\vfe\weakstar\Phi,&\quad \text{ weak* in } L^\infty(0,T;H^1(\dom)),
	\end{split}
	\end{equation}
	for limiting functions $\bM,\bH\in L^\infty(0,T;L^2(\dom))$ and $\Phi\in L^\infty(0,T;H^1(\dom))$. Moreover, $\Grad\Phi =\bH$, because $\Grad\vfe=\bhe$ and both quantities converge weak*.
	Using the sixth a priori bound~\eqref{seq:apriorimheq} and combining it with the weak* convergence~\eqref{eq:weakconvHM}, we obtain for any test function $\psi\in L^2([0,T]\times\dom)$, 
	\begin{equation*}
	\begin{split}
	\left|\int_0^T\!\!\int_{\dom}(\kappa_0\bH-\bM)\cdot\psi\, dx \right|&=\left|\lim_{\tau\to 0}\int_0^T\!\!\int_{\dom}(\kappa_0\bhe-\bme)\cdot\psi\, dx\right|\\
	&\leq \lim_{\tau\to 0}\norm{\kappa_0\bhe-\bme}_{L^2}\norm{\psi}_{L^2}\leq C\lim_{\tau\to 0}\tau^{1/2} =0,
	\end{split}
	\end{equation*}
	hence $\kappa_0\bH=\bM$ in $L^2([0,T]\times\dom)$.
	
	Using~\eqref{eq:weakconvHM} to pass to the limit in the weak formulation of equation \eqref{seq:divb}, we get for test functions $\psi\in L^1(0,T;H^1(\dom))$
	\begin{equation}\label{eq:elliptic}
	\int_0^T\int_{\dom}\bh_a\cdot\Grad\psi dx dt=\int_0^T\int_{\dom}(\bhe+\bme)\cdot \Grad\psi dx dt \rightarrow \int_0^T\int_{\dom}(\bH+\bM)\cdot \Grad\psi dx dt,
	\end{equation}
	hence in the limit
	\begin{equation*}
	\int_0^T\!\!\int_{\dom}\bh_a\cdot\Grad\psi dx dt = \int_0^T\int_{\dom}(\bH+\bM)\cdot \Grad\psi dx dt,
	\end{equation*}
	for any test function $\psi\in L^1(0,T;H^1(\dom))$. In particular, we can use $\Phi$ as a test function and get (using $\Grad\Phi = \bH$), 
	\begin{equation}\label{eq:limit1elliptic}
	\int_0^T\!\!\int_{\dom}\bh_a\cdot\Grad\Phi\, dx dt = \int_0^T\int_{\dom}(\bH+\bM)\cdot \bH\, dx dt=\int_0^T\int_{\dom}(1+\kappa_0)|\bH|^2\, dx dt.
	\end{equation}
	On the other hand, we could choose $\psi=\vfe$ as a test function in \eqref{eq:elliptic}, and pass to the limit $\tau\to 0$:
	\begin{equation}\label{eq:limit2elliptic}
	\int_0^T\!\!\int_{\dom}\bh_a\cdot\Grad\Phi\, dx dt  \leftarrow \int_0^T\!\!\int_{\dom}\bh_a\cdot\Grad\vfe\, dx dt  =\int_0^T\!\!\int_{\dom}(\bhe+\bme)\cdot \bhe dx dt \rightarrow \int_0^T\!\!\int_{\dom}\overline{(\bH+\bM)\cdot \bH}\, dx dt.
	\end{equation}
	where $\overline{(\bH+\bM)\cdot\bH}$ is the weak limit of $(\bhe+\bme)\cdot\bhe$. Combining \eqref{eq:limit1elliptic} and \eqref{eq:limit2elliptic}, we get
	\begin{equation}\label{eq:gaga}
	\int_0^T\int_{\dom}\overline{(\bH+\bM)\cdot \bH}\, dx dt :=\lim_{\tau\to 0} \int_0^T\int_{\dom}(\bhe+\bme)\cdot \bhe dx dt  =\int_0^T\int_{\dom}(1+\kappa_0)|\bH|^2 dx dt,
	\end{equation}
	Note that
	\begin{equation*}
	(1+\kappa_0)\norm{\bhe}_{L^2((0,T)\times \dom)}^2=\int_0^T\int_{\dom}\left(|\bhe|^2 +\bhe\cdot\bme\right) dx dt + \int_0^T\int_{\dom}\bhe \left(\kappa_0\bhe-\bme\right) dx dt,
	\end{equation*}
	therefore
	\begin{equation*}
	\begin{split}
	\left|(1+\kappa_0)\norm{\bhe}_{L^2((0,T)\times \dom)}^2- \int_0^T\int_{\dom}\left(|\bhe|^2 +\bhe\cdot\bme\right) dx dt \right|&= \left| \int_0^T\int_{\dom}\bhe \left(\kappa_0\bhe-\bme\right) dx dt\right|\\
	&\leq \norm{\bhe}_{L^2}\norm{\bme-\kappa_0\bhe}_{L^2}\leq C\sqrt{\tau},
	\end{split}
	\end{equation*}
	and
	\begin{align*}
	&\left|(1+\kappa_0)\lim_{\tau\to 0}\norm{\bhe}^2_{L^2((0,T)\times \dom)} -\int_0^T\!\!\int_{\dom}\overline{\bH\cdot(\bH+\bM)} dx dt\right|\\
	&\quad =\bigg|(1+\kappa_0)\lim_{\tau\to 0}\int_0^T\!\!\int_{\dom}|\bhe|^2 dx dt-\lim_{\tau\to 0}\int_0^T\!\!\int_{\dom}\bhe\cdot(\bhe+\bme)dxdt\bigg|\\
	&\quad = \left|\lim_{\tau\to 0}\int_0^T\!\!\int_{\dom}\bhe\cdot(\kappa_0\bhe -\bme) dx dt\right|\\
	&\quad\leq \limsup_{\tau\to 0}\left|\int_0^T\!\!\int_{\dom}\bhe\cdot(\kappa_0\bhe -\bme) dx dt\right|\\
	&\quad\leq \limsup_{\tau\to 0}\int_0^T\!\!\int_{\dom}|\bhe|\left|\kappa_0\bhe -\bme \right|dx dt\\
	&\quad \leq C\limsup_{\tau\to 0}\sqrt{\tau} = 0.
	\end{align*}
	So combining this with~\eqref{eq:gaga}, we get
	\begin{equation*}
	\lim_{\tau\to 0}\norm{\bhe}^2_{L^2((0,T)\times \dom)} = \int_0^T\int_{\dom}|\bH|^2 dx dt.
	\end{equation*}
	Hence
	\begin{align*}
	\lim_{\tau\to 0}\norm{\bhe-\bH}_{L^2}^2 &=\lim_{\tau\to 0}\norm{\bhe}_{L^2}^2- 2 \lim_{\tau\to 0}\int_0^T\int_{\dom}\bhe\cdot\bH dx dt +\norm{\bH}_{L^2}^2\\
	& = - 2 \lim_{\tau\to 0}\int_0^T\int_{\dom}\bhe\cdot\bH dx dt +2\norm{\bH}_{L^2}^2\\
	& = 0,
	\end{align*}
	the last equality follows from the weak convergence of $\{\bhe\}_{\tau> 0}$. Therefore a subsequence of $\{\bhe\}_{\tau> 0}$ converges in fact strongly in $L^2$. Using this and the a priori bound~\eqref{seq:apriorimheq}, we also get strong convergence of a subsequence of $\{\bme\}_{\tau>0}$:
	\begin{equation*}
	\lim_{\tau\to 0}\norm{\bme-\kappa_0\bH}_{L^2}
	\leq \lim_{\tau\to 0}\norm{\bme-\kappa_0\bhe}_{L^2}+\lim_{\tau\to 0}\norm{\kappa_0\bhe-\kappa_0\bH}_{L^2}=0.
	\end{equation*}
	This allows us to pass to the limit in all the terms in the weak formulation of \eqref{eq:rosenzweig} and obtain that a subsequence converges, as $\tau\to 0$, to a weak solution of \eqref{eq:limitsys}. For the last term in \eqref{seq:dtu}, we use Lemma~\ref{lem:integrableweaksol}. In equation \eqref{seq:dtm}, we multiply everything by $\tau$ before passing $\tau\to 0$ in the weak formulation.
\end{proof}
\subsection{Convergence when the solution of the limiting system is smooth}\label{S5}
When the solution of the limiting system~\eqref{eq:limitsys} is smooth, which is the case for short times and smooth data and in two space dimensions for smooth enough data (this is shown in Appendix~\ref{S8}), one can show a convergence rate in $\tau$ for the approximate solutions of~\eqref{eq:rosenzweig}:
\begin{theorem}
	If the solution of~\eqref{eq:limitsys} satisfies $\bU,\bW\in L^\infty(0,T;\mathrm{Lip}(\dom))$, $\bH=\Grad\Phi\in L^2(0,T;H^1(\dom))$, $\partial_t\bH\in L^2([0,T]\times\dom)$; and the initial data for~\eqref{eq:rosenzweig} and~\eqref{eq:limitsys} satisfy
	\begin{equation*}
	\norm{\bu_0-\bU_0}_{L^2(\dom)}^2 +\norm{\bw_0-\bW_0}_{L^2(\dom)}^2+\norm{\bm_0-\bM_0}_{L^2(\dom)}^2 
	\leq C\tau,
	\end{equation*}	
	then the solutions $\sole$ of~\eqref{eq:rosenzweig} converge as $\tau\to 0$ to the solution of the limiting system~\eqref{eq:limitsys} at the rate:
	\begin{multline*}
	\norm{\bue-\bU}_{L^2(\dom)}(t)+\norm{\bwe-\bW}_{L^2(\dom)}(t)+\norm{\bme-\bM}_{L^2(\dom)}(t)\\
	+\norm{\bhe-\bH}_{L^2(\dom)}(t)+\norm{\Grad(\bue-\bU)}_{L^2([0,t]\times\dom)}+\norm{\Grad(\bwe-\bW)}_{L^2([0,t]\times\dom)}\leq C\sqrt{\tau}(1+\exp(Ct)).
	\end{multline*}
	
\end{theorem}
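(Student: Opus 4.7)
The approach is the relative entropy (modulated energy) method. I introduce the functional
\begin{equation*}
\mathcal{R}(t) := \frac{1}{2}\int_{\dom}\left[|\bue-\bU|^2 + |\bwe-\bW|^2 + \frac{\mu_0}{\kappa_0}|\bme-\bM|^2 + \mu_0|\bhe-\bH|^2\right](t)\,dx
\end{equation*}
and decompose it as $\mathcal{R}(t)=\int_{\dom} E(\sole)\,dx+\int_{\dom} E(\sol_0)\,dx-\int_{\dom}[\bue\cdot\bU+\bwe\cdot\bW+\frac{\mu_0}{\kappa_0}\bme\cdot\bM+\mu_0\bhe\cdot\bH]\,dx$. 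I bound the time derivative of the first term by the energy inequality of Definition~\ref{def:weaksol}; the second term obeys an exact energy equality derivable from~\eqref{eq:limitsys} since $\sol_0$ is smooth. The mixed inner-product terms are computed by testing the weak formulation of $\sole$ against the smooth functions $\bU$, $\bW$ and $\frac{\mu_0}{\kappa_0}\bM-\mu_0\bH$, and by testing the strong limit equations against $\bue$, $\bwe$, $\bme$.

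The crucial algebraic observation is an elliptic identity relating the magnetic differences. Since both $\bhe=\Grad\vfe$ and $\bH=\Grad\Phi$ solve Poisson problems with Neumann data sharing the same $\bh_a$ component, the difference $\vfe-\Phi$ satisfies $-\Delta(\vfe-\Phi)=\Div(\bme-\bM)$ with boundary datum $-(\bme-\bM)\cdot\normal$, and testing against $\vfe-\Phi$ itself yields
\begin{equation*}
\int_{\dom}|\bhe-\bH|^2\,dx = -\int_{\dom}(\bhe-\bH)\cdot(\bme-\bM)\,dx.
\end{equation*}
Using $\bM=\kappa_0\bH$ to write $\bme-\kappa_0\bhe=(\bme-\bM)-\kappa_0(\bhe-\bH)$ and expanding the dissipation from $D(\sole)$, this identity gives
\begin{equation*}
\int_{\dom}\frac{\mu_0}{\kappa_0\tau}|\bme-\kappa_0\bhe|^2\,dx = \frac{\mu_0}{\kappa_0\tau}\int_{\dom}|\bme-\bM|^2\,dx + \frac{\mu_0(2+\kappa_0)}{\tau}\int_{\dom}|\bhe-\bH|^2\,dx,
\end{equation*}
so the single relaxation dissipation controls both magnetic contributions of $\mathcal{R}(t)$ at rate $1/\tau$. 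This is the mechanism that tames the otherwise singular $1/\tau$ terms from the magnetization relaxation.

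For the remaining cross terms, the Kelvin force must be rewritten via Lemma~\ref{lem:integrableweaksol} because $\bhe$ has only $L^2$ spatial regularity. After invoking $\Div\bue=\Div\bU=0$ and integrating by parts, the convective contributions reduce to $\int((\bue-\bU)\cdot\Grad)\bU\cdot(\bue-\bU)\,dx$ plus the $\bW$ analogue, both bounded by $(\norm{\Grad\bU}_{L^\infty}+\norm{\Grad\bW}_{L^\infty})\mathcal{R}(t)$ thanks to the Lipschitz hypothesis, while the viscous cross terms combine with $D(\sole)$ to yield $\nu\norm{\Grad(\bue-\bU)}_{L^2}^2+c_1\norm{\Grad(\bwe-\bW)}_{L^2}^2+\cdots$ on the dissipation side. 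Forcing-type residuals involving $\partial_t\bH$ and $\partial_t\bh_a$ (which enter because $\bM=\kappa_0\bH$ is not preserved by the $\sole$-dynamics) are handled by weighted Cauchy--Schwarz, for example
\begin{equation*}
\left|\int_{\dom}\partial_t\bH\cdot(\bme-\bM)\,dx\right| \leq \frac{\mu_0}{4\kappa_0\tau}\int_{\dom}|\bme-\bM|^2\,dx + C\tau\norm{\partial_t\bH}_{L^2(\dom)}^2,
\end{equation*}
the first piece absorbed into the expanded relaxation dissipation and the second contributing $C\tau$ after time integration by the hypothesis $\partial_t\bH\in L^2([0,T]\times\dom)$.

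Assembling everything yields
\begin{equation*}
\mathcal{R}(t)+\int_0^t\!\!\int_{\dom}\left[\nu|\Grad(\bue-\bU)|^2+c_1|\Grad(\bwe-\bW)|^2\right]dx\,ds \leq \mathcal{R}(0)+C\tau+C\int_0^t\mathcal{R}(s)\,ds,
\end{equation*}
and Grönwall's lemma combined with the initial hypothesis $\mathcal{R}(0)\leq C\tau$ delivers $\mathcal{R}(t)\leq C\tau(1+\exp(Ct))^2$, whence the stated $\sqrt{\tau}$ rate upon taking square roots. The main anticipated obstacle is the systematic bookkeeping of all cross terms coming from the Kelvin-force identity~\eqref{eq:mhidentity} and the cross-product coupling $\bwe\times\bme$, ensuring that every $1/\tau$-weighted contribution either cancels exactly against its limit-system counterpart or is dominated by the expanded relaxation dissipation; the elliptic identity above is the algebraic cornerstone that makes this bookkeeping feasible.
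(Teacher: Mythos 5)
Your proposal follows the same relative-entropy skeleton as the paper — same functional $\mathcal{E}(\sole|\sol_0)$, same decomposition $E(\sole)+E(\sol_0)-dE(\sol_0)\sole$, same cross-term computation, Grönwall at the end — but it routes the absorption of the residuals through a genuinely different algebraic step. You pass to the expanded dissipation
\begin{equation*}
\frac{\mu_0}{\kappa_0\tau}\int_{\dom}|\bme-\kappa_0\bhe|^2\,dx
=\frac{\mu_0}{\kappa_0\tau}\int_{\dom}|\bme-\bM|^2\,dx
+\frac{\mu_0(2+\kappa_0)}{\tau}\int_{\dom}|\bhe-\bH|^2\,dx,
\end{equation*}
obtained from the elliptic identity $\int_{\dom}|\bhe-\bH|^2\,dx=-\int_{\dom}(\bme-\bM)\cdot(\bhe-\bH)\,dx$, and then absorb each residual in terms of $\bme-\bM$ and $\bhe-\bH$. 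This identity and expansion are correct (I verified the algebra), and they do make the subsequent Young steps mechanical: every $\tau$-singular residual can be reorganized into $(\bme-\bM)$ and $(\bhe-\bH)$ pieces and fed into the two positive $1/\tau$-weighted integrals above. The paper proceeds differently: it leaves the relaxation dissipation in the original form $\frac{\mu_0}{\kappa_0\tau}\int|\bm-\kappa_0\bh|^2$ and instead rewrites the residuals so that they naturally carry a factor of $\bm-\kappa_0\bh$ (for instance $\bm\times\bh=(\bm-\kappa_0\bh)\times\bh$, or the telescoping decomposition of the Kelvin-force cross term into $\text{III}_1,\text{III}_2,\text{III}_3$), then applies weighted Young directly. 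Both routes close the Grönwall loop with constants independent of $\tau$; the paper's is more parsimonious because the residuals emerge precisely in the $\bm-\kappa_0\bh$ form, whereas yours splits each into two pieces but buys a uniform bookkeeping template in the "relative" variables $\bme-\bM$, $\bhe-\bH$. In practice the paper does use a version of your elliptic identity, but only to compare the initial data ($\norm{\bh_0-\bH_0}_{L^2}\le\norm{\bm_0-\bM_0}_{L^2}$ via Lax–Milgram), not for the time-evolution step.

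One point of emphasis that you should state more carefully: the elliptic identity is \emph{not} what tames the leading $1/\tau$-singular cross terms. Those are the quantities $\frac{\mu_0}{\kappa_0\tau}\bM\cdot(\bm-\kappa_0\bh)$ and $\frac{\mu_0}{\tau}\bH\cdot(\bm-\kappa_0\bh)$ arising from testing the $\bm$-equation against $\frac{\mu_0}{\kappa_0}\bM-\mu_0\bH$, and they cancel \emph{exactly} because the constitutive law $\bM=\kappa_0\bH$ makes $\frac{1}{\kappa_0}\bM-\bH=0$. What survives is only $O(1)$ in $\tau$ (the terms $\text{II}$, $\text{III}$, $\text{IV}$ of the paper), and it is these that you then dominate by the relaxation dissipation via Young with $\tau$-weights. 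Your expanded-dissipation identity is the tool for that second step, not the cancellation mechanism itself. With that clarification, your outline is sound, provided you carry out the full Kelvin-force bookkeeping (the analogue of the paper's $\text{III}_1,\text{III}_2,\text{III}_3$ split, where you must also use $\Div\bU=0$, $\bh=\Grad\vfe$, and the identity $\int_\dom\bU\cdot[(\bh\cdot\Grad)\bh]\,dx=0$ to eliminate the purely quadratic-in-$\bH$ piece).
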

\begin{proof}
	To prove this theorem, we will use the relative entropy method that was introduced by Dafermos~\cite{Dafermos1979,Dafermos1979b} and DiPerna~\cite{DiPerna1979} in the context of hyperbolic systems of conservation laws, see also~\cite{Dafermos1979a}.
	To this end, we define the following relative entropy functional,
	\begin{equation}
	\label{eq:relativeentropydef}
	\mathcal{E}(\sole | \sol_0):= E(\sole)-E(\sol_0)-dE(\sol_0)(\sole-\sol_0),
	\end{equation}
	where the energy $E$ is defined in~\eqref{eq:defE}. The integral of $\mathcal{E}(\sole | \sol_0)$ over $\dom$ measures the distance in $L^2$ of the solution $\sole:=(\bue,\bwe,\bme,\bhe)$ of~\eqref{eq:rosenzweig} from the solution $\sol_0:=(\bU,\bW,\bM,\bH)$ of the limiting system~\eqref{eq:limitsys}. For notational convenience, we will omit writing the dependence of $\bue,\bwe,\bme$ and $\bhe$ on $\tau$ and write $\sole:=(\bu,\bw,\bm,\bh)$, while we denote the solution of the limiting system by $\sol_0:=(\bU,\bW,\bM,\bH)$. $dE(\sol_0)$ is the derivative of $E$ with respect to all variables, that is, $dE(\sol_0)=(\bU,\bW,\frac{\mu_0}{\kappa_0}\bM,\mu_0\bH)$. Some basic algebra shows that $\mathcal{E}(\sole|\sol_0)$ can be written as
	\begin{equation}
	\label{eq:relativeentropy}
	\mathcal{E}(\sole|\sol_0)=\frac{1}{2}\left(|\bu-\bU|^2 + |\bw-\bW|^2 +\frac{\mu_0}{\kappa_0}|\bm-\bM|^2+\mu_0|\bh-\bH|^2\right),
	\end{equation}
	or
	\begin{equation}
	\label{eq:relativeentropydef2}
	\mathcal{E}(\sole | \sol_0):= E(\sole)+E(\sol_0)-dE(\sol_0)\sole,
	\end{equation}
	From the energy (in)equality~\eqref{eq:energy0} and Remark~\ref{rem:energysmoothsol}, we obtain
	\begin{multline}
	\label{eq:energysubtraction}
	\int_{\dom}\left(E(\sole(t))+E(\sol_0(t))\right) dx +\int_0^t\int_{\dom}\left(
	D(\sole)+D(\sol_0)\right)dxds\\
	\leq \int_{\dom}\left(E(\sole(0))+E(\sol_0(0))\right) dx +\mu_0\int_0^t\int_{\dom} \partial_s\bh_a(\bH+\bh) dx ds.
	\end{multline}
	Then we note that, since $\sole\in C_w([0,T];L^2(\dom))$
	\begin{equation*}
	\int_{\dom} dE(\sol_0(t))\sole(t) dx -\int_{\dom} dE(\sol_0(\eta))\sole(\eta) dx = -\lim_{\eps\to 0}\int_0^T\int_{\dom} dE(\sol_0(s))\sole(s)\partial_s\te(s) dx ds,
	\end{equation*}
	where $\te=\mathbf{1}_{[\eta,t]}\ast \omega_\eps$ is a regularized version of the indicator function of the interval $[\eta,t]$, where $\eta\geq\eps>0$ is a small number, $\omega_\eps(s):=\frac{1}{\eps}\omega(s/\eps)$ is a symmetric, nonnegative, smooth, compactly supported on $[-1,1]$ mollifier with $\int\omega(s) ds=1$. Hence, we can compute an expression for $\int_0^T\int_{\dom} dE(\sol_0(s))\sole(s)\partial_s\te(s) dx ds$ using the weak formulations of the equations for $\sole$, see Definition~\ref{def:weaksol}, points (ii) and (iii), and the strong formulation~\eqref{eq:limitsys} for $\sol_0$ since it is assumed to be sufficiently regular.
	We have
	\begin{multline*}
	\int_0^T\int_{\dom} dE(\sol_0(s))\sole(s)\partial_s\te(s) dx ds= \int_0^T\int_{\dom} \left(\bU\bu+\bW\bw+\frac{\mu_0}{\kappa_0}\bM\bm+\kappa_0\bH\bh\right)\partial_s\te(s) dx ds\\
	=\underbrace{\int_0^T\int_{\dom} \bu(\partial_s(\bU\te)-\bU_s\te)dxds}_{(a)} +\underbrace{\int_0^T\int_{\dom}\bw(\partial_s(\bW\te)-\bW_s\te)dx ds}_{(b)}\\+\underbrace{\frac{\mu_0}{\kappa_0}\int_0^T\int_{\dom}\bm(\partial_s(\bM\te)-\bM_s\te)dx ds}_{(c)}+\underbrace{\mu_0\int_0^T\int_{\dom}\bh(\partial_s(\bH\te)-\bH_s\te dx ds}_{(d)}
	\end{multline*}
	Using~\eqref{seq:dtu} and~\eqref{seq:dtuL}, we compute:
	\begin{equation*}
	\begin{split}
	(a)&=\int_0^T\int_{\dom}\left[2\nu\Grad\bu:\Grad\bU+2\nu_r\left(\Curl\bu\Curl\bU-\Curl\bU\bw-\Curl\bu\bW\right)\right] \te dx ds\\
	&\hphantom{=}+\mu_0\int_0^T\int_{\dom}\bh[(\bm+\bh)\cdot\Grad ]\bU \te dx ds\\
	&\hphantom{=}+\int_0^T\int_{\dom} (\bu-\bU)[(\bU-\bu)\cdot\Grad]\bU \te dxds.
	\end{split}
	\end{equation*}
	Equations~\eqref{seq:dtw} and~\eqref{seq:dtwL} yield:
	\begin{equation*}
	\begin{split}
	(b)&=\int_0^T\int_{\dom}\left[2 c_1\Grad\bw:\Grad\bW+2 c_2\Div\bW\Div\bw+8\nu_r\bw\bW -2\nu_r\left(\Curl\bU\bw+\Curl\bu\bW\right)\right] \te dx ds\\
	&\hphantom{=}-\mu_0\int_0^T\int_{\dom}(\bm\times\bh)\cdot\bW\te dx ds+\int_0^T\int_{\dom} (\bw-\bW)[(\bU-\bu)\cdot\Grad]\bW \te dxds.
	\end{split}
	\end{equation*}
	From~\eqref{seq:dtm} and~\eqref{seq:ML}, we obtain
	\begin{equation*}
	\begin{split}
	(c)&=\frac{\mu_0}{\kappa_0}\int_0^T\int_{\dom}\left[-\bm(\bu\cdot\Grad)\bM-(\bw\times\bm)\cdot\bM+\frac{1}{\tau}(\bm-\kappa_0\bh)\bM\right]\te dx ds\\
	&\hphantom{=} -\mu_0\int_0^T\int_{\dom}\bH_s\bm\te dx ds.
	\end{split}
	\end{equation*}
	Finally, using~\eqref{seq:curlh}, \eqref{seq:divb}, \eqref{seq:curlhL} and~\eqref{seq:divbL}, we get,
	\begin{equation*}
	\begin{split}
	(d)&=-\mu_0\int_0^T\int_{\dom}\partial_s\bh_a(\bh+\bH)\te dx ds +\mu_0\int_0^T\int_{\dom}\bm(\bu\cdot\Grad)\bH\te dx ds \\
	&\hphantom{=}+\mu_0\int_0^T\int_{\dom}\bh\bM_s\te dx ds -\frac{\mu_0}{\tau}\int_0^T\int_{\dom}(\bm-\kappa_0\bh)\bH\te dx ds+\mu_0\int_0^T\int_{\dom}(\bw\times\bm)\cdot\bH\te dx ds
	\end{split}
	\end{equation*}
	Combining $(a)-(d)$, sending $\eps\to 0$ and using $\bM=\kappa_0\bH$ (equation~\eqref{seq:ML}), we obtain
	\begin{align}\label{eq:crosstermRE}
	\begin{split}
	\int_{\dom}( dE(\sol_0(t))\sole(t) - dE(\sol_0(\eta))\sole(\eta)) dx& = -2\int_\eta^t\int_{\dom}\left[\nu\Grad\bu:\Grad\bU + c_1\Grad\bw:\Grad\bW+ c_2\Div\bW\Div\bw\right] dx ds\\
	&\hphantom{=}-\nu_r\int_\eta^t\int_{\dom}\left(2\Curl\bu\Curl\bU-4\Curl\bU\bw-4\Curl\bu\bW+8\bw\bW\right) dx ds\\
	&\hphantom{=}-\mu_0\int_\eta^t\int_{\dom}\bh[(\bm+\bh)\cdot\Grad ]\bU  dx ds\\
	&\hphantom{=}-\int_\eta^t\int_{\dom} (\bu-\bU)[(\bU-\bu)\cdot\Grad]\bU  dxds\\
	&\hphantom{=}-\int_\eta^t\int_{\dom} (\bw-\bW)[(\bU-\bu)\cdot\Grad]\bW  dxds\\
	&\hphantom{=}+\mu_0\int_\eta^t\int_{\dom}(\bm\times\bh)\cdot\bW dx ds
	+\mu_0\int_\eta^t\int_{\dom}\bH_s(\bm-\kappa_0\bh) dx ds\\
	&\hphantom{=}+\mu_0\int_\eta^t\int_{\dom}\partial_s \bh_a(\bh+\bH) dx ds. 
	\end{split}
	\end{align}
	This holds for any $\eta>0$. Using the weak continuity of $\sole$ and $\sol_0$ and the integrability properties, we can pass $\eta\to 0$. Subtracting the result from~\eqref{eq:energysubtraction}, we get
	\begin{multline*}
	\int_{\dom}\mathcal{E}(\sole|\sol_0)(t) dx +\int_0^t\int_{\dom} \nu|\Grad(\bu-\bU)|^2 dx ds+\frac{\mu_0}{\kappa_0\tau}\int_0^t\int_{\dom}|\bm-\kappa_0\bh|^2 dx ds\\
	+\int_0^t\int_{\dom} \left(c_1|\Grad(\bw-\bW)|^2 + c_2 |\Div(\bw-\bW)|^2 +\nu_r|2(\bw-\bW)-\Curl(\bu-\bU)|^2\right) dx ds 
	\\\leq \int_{\dom}\mathcal{E}(\sole|\sol_0)(0) dx-\mu_0\int_0^t\int_{\dom}\bH_s(\bm-\kappa_0\bh) dx ds \\
	+\int_0^t\int_{\dom} (\bu-\bU)[(\bU-\bu)\cdot\Grad]\bU  dxds
	+\int_0^t\int_{\dom} (\bw-\bW)[(\bU-\bu)\cdot\Grad]\bW  dxds\\
	+\mu_0\int_0^t\int_{\dom}\bh[(\bm+\bh)\cdot\Grad ]\bU  dx ds-\mu_0\int_0^t\int_{\dom}(\bm\times\bh)\cdot\bW dx ds.
	\end{multline*}
	Let us denote
	\begin{multline*}
	\mathcal{D}(\sole|\sol_0):=\nu|\Grad(\bu-\bU)|^2+\frac{\mu_0}{\kappa_0\tau}|\bm-\kappa_0\bh|^2
	+c_1|\Grad(\bw-\bW)|^2 \\+ c_2 |\Div(\bw-\bW)|^2 +\nu_r|2(\bw-\bW)-\Curl(\bu-\bU)|^2. 
	\end{multline*}
	Then the previous identity becomes
	\begin{multline}\label{eq:relentropyapriori}
	\int_{\dom}\mathcal{E}(\sole|\sol_0)(t) dx +\int_0^t\int_{\dom} \mathcal{D}(\sole|\sol_0)(s) dx ds 
	\\ \leq \int_{\dom}\mathcal{E}(\sole|\sol_0)(0) dx+\underbrace{\int_0^t\int_{\dom} (\bu-\bU)[(\bU-\bu)\cdot\Grad]\bU  dxds}_{\text{I}_1} \\
	+\underbrace{\int_0^t\int_{\dom} (\bw-\bW)[(\bU-\bu)\cdot\Grad]\bW  dxds}_{\text{I}_2}-\underbrace{\mu_0\int_0^t\int_{\dom}\bH_s(\bm-\kappa_0\bh) dx ds}_{\text{II}}\\
	+\underbrace{\mu_0\int_0^t\int_{\dom} \bh[(\bm+\bh)\cdot\Grad ]\bU  dx ds}_{\text{III}}-\underbrace{\mu_0\int_0^t\int_{\dom}(\bm\times\bh)\cdot\bW dx ds}_{\text{IV}}.
	\end{multline}
	We start by bounding the terms $\text{I}_1$ and $\text{I}_2$.
	\begin{equation*}
	|\mathrm{I}_1|\leq \int_0^t\int_{\dom}|\bu-\bU|^2 |\Grad\bU| dxds \leq \norm{\Grad\bU}_{L^\infty}\norm{\bu-\bU}_{L^2([0,t]\times\dom)}^2,
	\end{equation*}
	and
	\begin{equation*}
	\begin{split}
	|\mathrm{I}_2|\leq& \int_0^t\int_{\dom}|\bu-\bU| |\bw-\bW| |\Grad\bW| dxds \leq \norm{\Grad\bW}_{L^\infty}\norm{\bu-\bU}_{L^2([0,t]\times\dom)}\norm{\bw-\bW}_{L^2([0,t]\times\dom)}\\
	&\leq \frac{1}{2}\norm{\Grad\bW}_{L^\infty}\left(\norm{\bu-\bU}_{L^2([0,t]\times\dom)}^2+\norm{\bw-\bW}^2_{L^2([0,t]\times\dom)}\right).
	\end{split}
	\end{equation*}
	Using Young's inequality and the regularity of the limit functions $(\bU,\bW,\bH)$, we bound term II as follows (notice that it indeed follows that $\bH_t\in L^2(\dom)$ as long as $\partial_t\bh_a\in L^2(\dom)$):
	\begin{equation*}
	\begin{split}
	|\text{II}|&\leq \mu_0\int_0^t\int_{\dom}|\bm-\kappa_0\bh| |\bH_s| dxds \\
	&\leq \frac{\mu_0}{8\kappa_0\tau}\int_0^t\int_{\dom}|\bm-\kappa_0\bh|^2 dxds +4\tau\mu_0\kappa_0\int_0^t\int_{\dom}|\bH_s|^2 dxds.
	\end{split}
	\end{equation*}
	The second term on the right hand side is of order $C\tau$ since $\bH_t\in L^2(\dom)$.
	
	Assuming for the moment that all the involved functions are smooth, we rewrite term $\text{III}$ as follows (the general case follows by approximation, for example one can mollify all the variables and because the magnetostatic equation~\eqref{seq:divb}/\eqref{seq:divbL} is linear, the mollified variables solve a mollified equation):
	\begin{align*}
	\text{III}& = \mu_0\int_0^t\int_{\dom}\left(\bh\left[(\bm+\bh)\cdot\Grad\right]\bU\right)dx ds\\
	& = \mu_0\int_0^t\int_{\dom}\big[\bh\cdot\left(\left[(\bm-\kappa_0\bh)\cdot\Grad\right]\bU\right) +(1+\kappa_0)\bh\cdot\left([\bh\cdot\Grad]\bU\right)\bigr] dx ds\\
	& = \mu_0\int_0^t\int_{\dom}(\bh-\bH)\cdot\left(\left[(\bm-\kappa_0\bh)\cdot\Grad\right]\bU\right)dxds+\mu_0\int_0^t\int_{\dom}\bH\cdot\left(\left[(\bm-\kappa_0\bh)\cdot\Grad\right]\bU\right)dxds\\
	&\hphantom{=} +\mu_0(1+\kappa_0)\int_0^t\int_{\dom}\left([\bh\cdot\Grad](\bU\bh)\right) dx ds\\
	& = \mu_0\int_0^t\int_{\dom}(\bh-\bH)\cdot\left(\left[(\bm-\kappa_0\bh)\cdot\Grad\right]\bU\right)dxds-\mu_0\int_0^t\int_{\dom}\bU\cdot\left(\left[(\bm-\kappa_0\bh)\cdot\Grad\right]\bH\right)dxds\\
	&\hphantom{=}+\mu_0\int_0^t\int_{\dom}\left[(\bm-\kappa_0\bh)\cdot\Grad\right](\bH\bU)dxds  +\mu_0(1+\kappa_0)\int_0^t\int_{\dom}\left([\bh\cdot\Grad](\bU\bh)\right) dx ds\\
	& = \mu_0\int_0^t\int_{\dom}(\bh-\bH)\cdot\left(\left[(\bm-\kappa_0\bh)\cdot\Grad\right]\bU\right)dxds-\mu_0\int_0^t\int_{\dom}\bU\cdot\left(\left[(\bm-\kappa_0\bh)\cdot\Grad\right]\bH\right)dxds\\
	&\hphantom{=}+\mu_0\int_0^t\int_{\dom} \bh_a\cdot\Grad(\bH\bU) dxds -\mu_0(1+\kappa_0)\int_0^t\int_{\dom}\left[\bh\cdot\Grad\right](\bH\bU)dxds \\
	&\hphantom{=} +\mu_0(1+\kappa_0)\int_0^t\int_{\dom}\left([\bh\cdot\Grad](\bU\bh)\right) dx ds\\
	& = \mu_0\int_0^t\int_{\dom}(\bh-\bH)\cdot\left(\left[(\bm-\kappa_0\bh)\cdot\Grad\right]\bU\right)dxds-\mu_0\int_0^t\int_{\dom}\bU\cdot\left(\left[(\bm-\kappa_0\bh)\cdot\Grad\right]\bH\right)dxds\\
	&\hphantom{=}	+\mu_0(1+\kappa_0)\int_0^t\int_{\dom}\left([\bh\cdot\Grad](\bU(\bh-\bH))\right) dx ds\\
	&=\mu_0\int_0^t\int_{\dom}(\bh-\bH)\cdot\left(\left[(\bm-\kappa_0\bh)\cdot\Grad\right]\bU\right)dxds-\mu_0\int_0^t\int_{\dom}\bU\cdot\left(\left[(\bm-\kappa_0\bh)\cdot\Grad\right]\bH\right)dxds\\
	&\hphantom{=} +\mu_0(1+\kappa_0)\int_0^t\int_{\dom}\left([(\bh-\bH)\cdot\Grad](\bU(\bh-\bH))\right) dx ds\\
	&=\mu_0\int_0^t\int_{\dom}(\bh-\bH)\cdot\left(\left[(\bm-\kappa_0\bh)\cdot\Grad\right]\bU\right)dxds-\mu_0\int_0^t\int_{\dom}\bU\left(\left[(\bm-\kappa_0\bh)\cdot\Grad\right]\bH\right)dxds\\
	&\hphantom{=} +\mu_0(1+\kappa_0)\int_0^t\int_{\dom}(\bh-\bH)\cdot\left([(\bh-\bH)\cdot\Grad]\bU\right) dx ds
	\end{align*}
	where we also used in the third equality that, since $\bh=\Grad\varphi$ and $\Div\bU=0$,
	\begin{equation}\label{eq:waek}
	\begin{split}
	&\int_{\dom}\bU\cdot [(\bh\cdot\Grad)\bh] dx = \int_{\dom}\bU\cdot [(\Grad\varphi\cdot\Grad)\Grad\varphi] dx =\sum_{i,j=1}^{3}\bU^{(i)}\partial_j\varphi\partial_j\partial_i\varphi dx\\ 
	&\quad=\frac{1}{2}\sum_{i,j=1}^{3}\bU^{(i)}\partial_i|\partial_j\varphi|^2 dx=-\int_{\dom}\Div\bU |\Grad\varphi|^2 dx =0.
	\end{split}
	\end{equation}
	and equation~\eqref{seq:divbL} in the second last equality, and~\eqref{eq:waek} with $\bh$ replaced by $(\bh-\bH)$ in the last equality. Hence
	\begin{multline}
	\label{eq:faenx}
	\text{III} = \mu_0\int_0^t\int_{\dom}\bh\left[(\bm+\bh)\cdot\Grad\right]\bU dx ds\\
	=\underbrace{\mu_0\int_0^t\int_{\dom}(\bh-\bH)\cdot\left(\left[(\bm-\kappa_0\bh)\cdot\Grad\right]\bU\right)dxds}_{\text{III}_1}-\underbrace{\mu_0\int_0^t\int_{\dom}\bU\cdot\left(\left[(\bm-\kappa_0\bh)\cdot\Grad\right]\bH\right)dxds}_{\text{III}_2}\\
	+\underbrace{\mu_0(1+\kappa_0)\int_0^t\int_{\dom}(\bh-\bH)\cdot\left([(\bh-\bH)\cdot\Grad]\bU\right) dx ds}_{\text{III}_3}
	\end{multline}
	We observe that in this last identity~\eqref{eq:faenx} all terms are bounded when $\bh,\bm\in L^2([0,t]\times\dom)$ and $\bU,\bH\in L^\infty(0,T;\text{Lip}(\dom))$ and hence it holds in our situation by approximation of all quantities with smooth functions.
	We continue to bound the terms $\text{III}_1$, $\text{III}_2$ and $\text{III}_3$:
	For the first term, $\text{III}_1$, we use Young's inequality and then H\"{o}lder's inequality:
	\begin{equation*}
	|\text{III}_{1}|\leq \frac{\mu_0}{8\kappa_0\tau}\int_0^t\int_{\dom}|\bm-\kappa_0\bh|^2 dxds +2\mu_0\kappa_0\tau\norm{\Grad \bU}_{L^\infty}^2 \int_0^t\int_{\dom}|\bh-\bH|^2 dxds.
	\end{equation*}
	Using Young's inequality, we also bound the second term, $\text{III}_2$ by
	\begin{equation*}
	|\text{III}_2|\leq \frac{\mu_0}{8\kappa_0\tau}\int_0^t\int_{\dom}|\bm-\kappa_0\bh|^2 dxds +2\tau\mu_0\kappa_0\int_0^t\int_{\dom}|\bU|^2|\Grad\bH|^2 dx ds.
	\end{equation*}
	For the third term, we use again H\"{o}lder's inequlity
	\begin{equation*}
	|\text{III}_3|\leq \mu_0(1+\kappa_0)\norm{\Grad \bU}_{L^\infty}\int_0^t\int_{\dom}|\bh-\bH|^2 dx ds.
	\end{equation*}
	Combining the three, we get
	\begin{multline}\label{eq:boundIII}
	|\text{III}|\leq \frac{\mu_0}{4\kappa_0\tau}\int_0^t\int_{\dom}|\bm-\kappa_0\bh|^2 dxds+2\tau\mu_0\kappa_0\int_0^t\int_{\dom}|\bU|^2|\Grad\bH|^2 dx ds\\ +\mu_0\norm{\Grad \bU}_{L^\infty}\left(2\kappa_0\tau\norm{\Grad \bU}_{L^\infty}+1+\kappa_0\right) \int_0^t\int_{\dom}|\bh-\bH|^2 dxds. 
	\end{multline}  
	To bound term IV, we note that since $\bh\times\bh=0$, we can rewrite it as 
	\begin{equation*}
	\text{IV}= -\mu_0\int_0^t\int_{\dom}\bW\cdot \left(\left(\bm-{\kappa_0}\bh\right)\times\bh\right) dxds.
	\end{equation*}
	Using H\"older and Young's inequality, we can bound it as follows:
	\begin{align*}
	|\text{IV}|&\leq \mu_0 \norm{\bW}_{L^\infty} \norm{\bm-\kappa_0\bh}_{L^2([0,t]\times\dom)}\norm{\bh}_{L^2([0,t]\times\dom)}\\
	&\leq \frac{\mu_0}{8\tau\kappa_0}\norm{\bm-\kappa_0\bh}_{L^2([0,t]\times\dom)}^2 + 2\mu_0\tau\kappa_0\norm{\bh}_{L^2([0,t]\times\dom)}^2\norm{\bW}^2_{L^\infty}.
	\end{align*}
	Thus, under the assumption that $\bU,\bW\in L^\infty(0,T;\text{Lip}(\dom))$ and $\bH\in L^2(0,T;H^1(\dom))$, $\partial_t\bH\in L^2([0,T]\times\dom)$, the evolution of the relative entropy,~\eqref{eq:relentropyapriori}, is bounded as follows:
	\begin{multline}\label{eq:relentropybound}
	\int_{\dom}\mathcal{E}(\sole|\sol_0)(t) dx +\int_0^t\int_{\dom} \mathcal{D}(\sole|\sol_0)(s) dx ds 
	\\
	\leq \int_{\dom}\mathcal{E}(\sole|\sol_0)(0) dx+C\int_0^t\int_{\dom}|\bu-\bU|^2 dxds + C\int_0^t\int_{\dom}|\bw-\bW|^2 dx ds \\
	+ C \int_0^t\int_{\dom}|\bh-\bH|^2 dxds 
	+\frac{\mu_0}{2\kappa_0\tau}\int_0^t\int_{\dom}|\bm-\kappa_0\bh|^2 dxds + C\tau.
	\end{multline}
	Defining $\widetilde{\mathcal{D}}(\sole|\sol_0)$ by
	\begin{multline*}
	\widetilde{\mathcal{D}}(\sole|\sol_0):=\nu|\Grad(\bu-\bU)|^2+\frac{\mu_0}{2\kappa_0\tau}|\bm-\kappa_0\bh|^2
	+c_1|\Grad(\bw-\bW)|^2 \\+ c_2 |\Div(\bw-\bW)|^2 +\nu_r|2(\bw-\bW)-\Curl(\bu-\bU)|^2, 
	\end{multline*}
	this becomes
	\begin{multline}\label{eq:relentropybound2}
	\int_{\dom}\mathcal{E}(\sole|\sol_0)(t) dx +\int_0^t\int_{\dom} \widetilde{\mathcal{D}}(\sole|\sol_0)(s) dx ds 
	\\
	\leq \int_{\dom}\mathcal{E}(\sole|\sol_0)(0) dx+C\int_0^t\int_{\dom} \mathcal{E}(\sole|\sol_0)(s) dx ds + C\tau.
	\end{multline}
	Now using Gr\"{o}nwall's inequality for $A(t):=\int_{\dom}\mathcal{E}(\sole|\sol_0)(t) dx$, we obtain
	\begin{equation*}
	\int_{\dom}\mathcal{E}(\sole|\sol_0)(t) dx\leq \left(\int_{\dom}\mathcal{E}(\sole|\sol_0)(0) dx+C \tau\right)\exp(Ct).
	\end{equation*}
	Using this in~\eqref{eq:relentropybound2}, we can also bound $\widetilde{\mathcal{D}}(\sole|\sol_0)$:
	\begin{equation*}
	\int_0^t\int_{\dom} \widetilde{\mathcal{D}}(\sole|\sol_0)(s) dx ds \leq \left(\int_{\dom}\mathcal{E}(\sole|\sol_0)(0) + C\tau\right)(C+\exp(Ct)).
	\end{equation*}
	Therefore, if the initial data satisfy
	\begin{equation*}
	\norm{\bu_0-\bU_0}_{L^2(\dom)}^2 +\norm{\bw_0-\bW_0}_{L^2(\dom)}^2+\norm{\bm_0-\bM_0}_{L^2(\dom)}^2 \leq C\tau,
	\end{equation*}
	then, since
	\begin{equation*}
	\Delta(\varphi_0-\Phi_0)=-\Div(\bm_0-\bM_0),
	\end{equation*}
	and hence by the Lax-Milgram theorem,
	\begin{equation*}
	\norm{\bh_0-\bH_0}_{L^2(\dom)}=\norm{\Grad\varphi_0-\Grad\Phi_0}_{L^2(\dom)}\leq\norm{\bm_0-\bM_0}_{L^2(\dom)}\leq C\sqrt{\tau}, 
	\end{equation*}
	we have that
	\begin{equation*}
	\int_{\dom}\mathcal{E}(\sole|\sol_0)(0) dx \leq C\tau.
	\end{equation*}
	Therefore
	\begin{equation*}
	\int_{\dom}\mathcal{E}(\sole|\sol_0)(t) dx +\int_0^t\int_{\dom}\mathcal{D}(\sole|\sol_0)(s) dx ds \leq C\tau (1+\exp(Ct)),
	\end{equation*}
	which proves Theorem~\ref{thm:convergencerate}.
\end{proof}

\section{Acknowledgements}
The work of K.T.  was supported  in part by the National Science Foundation under the grant DMS-161496. The work of F.W. was supported by the Norwegian Research Council, project number  250302. F.W. would like to thank The Offspring for their inspiring work that was helpful for the completion of this manuscript.

\appendix

\section{Regular solutions of limiting system in 2d}\label{S8}
The goal of this section is to apply the arguments from the proof of regularity and uniqueness of the 2D Navier-Stokes equations, see for example~\cite{Temam2001}, to system~\eqref{eq:limitsys} to show it has a unique regular solution if the spatial dimension is 2 and the initial data and $f$ are sufficiently smooth. First we notice that from~\eqref{seq:ML}, equations~\eqref{seq:curlhL}--\eqref{seq:divbL} become
\begin{equation*}
\Delta\Phi(t,x) =0,\quad (t,x)\in [0,T]\times\dom;\quad \frac{\partial\Phi}{\partial n}=\frac{1}{1+\kappa_0}\bh_a\cdot\normal,\quad (t,x)\in [0,T]\times\partial\dom;\quad \int_{\dom}\Phi \,dx =0,
\end{equation*}
and so $\Phi$ is completely decoupled from $\bU$ and $\bW$. Hence if $\bh_a$ and the domain are sufficiently smooth, then by the trace theorem (see e.g. \cite[Thm 2.5.3]{Nedelec2001}) and elliptic regularity, the unique $\Phi$ is also smooth.
Moreover, the source term in~\eqref{seq:dtuL} can be rewritten as
\begin{equation*}
\mu_0(\bM\cdot\Grad)\bH = \frac{\mu_0\kappa_0}{2}\Grad\left(|\bH|^2\right),
\end{equation*}
and added to the pressure. Hence it remains to show that the solution $(\bU,\bW)$ of the system
\begin{subequations}
	\label{eq:simplified2d}
	\begin{align}
	\bU_t +(\bU\cdot\Grad) \bU- (\nu+\nu_r)\Delta \bU+\Grad \widetilde{P} &= 2 \nu_r \Curl \bW ,\label{seq:U2d}\\
	\Div \bU &=0\label{seq:divu2d},\\
	\bW_t + (\bU\cdot \Grad)\bW - c_1\Delta \bW -c_2 \Grad\Div\bW +4 \nu_r \bW &= 2\nu_r \Curl \bU,\label{seq:W2d}
	\end{align}
\end{subequations}
where $\widetilde{P}=P-\frac{1}{2}\mu_0\kappa_0|\bH|^2 $, is regular, which is very similar to showing regularity and uniqueness of the 2D Navier-Stokes equations. We add an outline of the proof here for completeness and because there is some coupling between the two equations involved. 
\begin{theorem}[Uniqueness of solutions in 2d]
	Let the spatial dimension $d=2$. Then the solution $(\bU,\bW)$ of \eqref{eq:simplified2d} with initial data $\bU_0\in \hdiv$ and $\bW_0\in L^2(\dom)$ is unique and continuous as a function from $[0,T]$ into $L^2(\dom)$.	
\end{theorem}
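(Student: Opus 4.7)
The plan is to adapt the classical energy-method argument for uniqueness of 2D Navier--Stokes to the coupled system \eqref{eq:simplified2d}, exploiting the Ladyzhenskaya inequality $\|v\|_{L^4(\dom)}^2 \leq C\|v\|_{L^2(\dom)}\|\Grad v\|_{L^2(\dom)}$, valid in two space dimensions for $v\in H^1_0(\dom)$. Let $(\bU_1,\bW_1)$ and $(\bU_2,\bW_2)$ be two weak solutions with the same initial data and the same pressure gradient absorbed into $\widetilde P$, and set $\bar\bU:=\bU_1-\bU_2$, $\bar\bW:=\bW_1-\bW_2$. Subtracting the equations yields a linear system for $(\bar\bU,\bar\bW)$ whose right-hand sides contain the cross terms $-(\bar\bU\cdot\Grad)\bU_2$ and $-(\bar\bU\cdot\Grad)\bW_2$ together with the linear Curl-coupling.

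The key step is to test the difference of the $\bU$-equation with $\bar\bU$, the difference of the $\bW$-equation with $\bar\bW$, integrate by parts (using $\Div\bar\bU=0$ to kill $(\bU_1\cdot\Grad)\bar\bU\cdot\bar\bU$ and the analogous term with $\bar\bW$), and add. The linear Curl-coupling combines to $2\nu_r\int(\Curl\bar\bW\cdot\bar\bU+\Curl\bar\bU\cdot\bar\bW)\,dx$, which after one integration by parts gives a contribution bounded by $\frac{\nu_r}{2}\|\Curl\bar\bU-2\bar\bW\|_{L^2}^2$ plus lower-order terms absorbed on the left via the dissipation $(\nu+\nu_r)\|\Grad\bar\bU\|^2+c_1\|\Grad\bar\bW\|^2+4\nu_r\|\bar\bW\|^2$. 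One then estimates the two nonlinear terms: using H\"older and Ladyzhenskaya in 2D,
\begin{equation*}
\Bigl|\int_{\dom}(\bar\bU\cdot\Grad)\bU_2\cdot\bar\bU\,dx\Bigr|\leq \|\bar\bU\|_{L^4}^2\|\Grad\bU_2\|_{L^2}\leq C\|\bar\bU\|_{L^2}\|\Grad\bar\bU\|_{L^2}\|\Grad\bU_2\|_{L^2},
\end{equation*}
and similarly
\begin{equation*}
\Bigl|\int_{\dom}(\bar\bU\cdot\Grad)\bW_2\cdot\bar\bW\,dx\Bigr|\leq C\|\bar\bU\|_{L^2}^{1/2}\|\Grad\bar\bU\|_{L^2}^{1/2}\|\bar\bW\|_{L^2}^{1/2}\|\Grad\bar\bW\|_{L^2}^{1/2}\|\Grad\bW_2\|_{L^2}.
\end{equation*}
Young's inequality with carefully chosen exponents lets us absorb the $\|\Grad\bar\bU\|^2$ and $\|\Grad\bar\bW\|^2$ into the dissipation, leaving a right-hand side of the form $C\bigl(\|\Grad\bU_2\|_{L^2}^2+\|\Grad\bW_2\|_{L^2}^2+1\bigr)\bigl(\|\bar\bU\|_{L^2}^2+\|\bar\bW\|_{L^2}^2\bigr)$. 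Since the energy estimate guarantees $\|\Grad\bU_2\|_{L^2}^2+\|\Grad\bW_2\|_{L^2}^2\in L^1(0,T)$, Gr\"onwall's lemma applied to $A(t):=\|\bar\bU(t)\|_{L^2}^2+\|\bar\bW(t)\|_{L^2}^2$ with $A(0)=0$ forces $A\equiv 0$, yielding uniqueness.

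The hardest step is controlling the mixed cross term $\int(\bar\bU\cdot\Grad)\bW_2\cdot\bar\bW\,dx$, since it couples four factors of the unknowns and their gradients; the Ladyzhenskaya estimate is essential here and is precisely where the dimension restriction $d=2$ enters. For the continuity statement, one observes that the weak formulation combined with the energy bounds puts $\partial_t\bU\in L^2(0,T;H^{-1}_{\mathrm{div}}(\dom))$ and $\partial_t\bW\in L^2(0,T;H^{-1}(\dom))$, whence the classical Lions--Magenes interpolation lemma (e.g.~\cite[Lemma 1.2, Ch.~III]{Temam2001}) upgrades the weak continuity already built into Definition~\ref{def:weaksol} to strong continuity into $L^2(\dom)$.
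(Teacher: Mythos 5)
Your proposal follows essentially the same route as the paper: subtract the two solutions, test the difference equations with $(\bar\bU,\bar\bW)$, exploit incompressibility to kill the self-convection terms, estimate the two cross terms with Ladyzhenskaya's 2D inequality and Young, and close with Gr\"onwall, citing \cite[Lemma III.1.2]{Temam2001} for the continuity. The only cosmetic discrepancy is in your handling of the Curl coupling: the paper does not ``bound'' it by a fraction of $\nu_r\|\Curl\bar\bU-2\bar\bW\|_{L^2}^2$ plus lower-order terms, but instead observes the exact algebraic identity $\nu_r\|\Grad\bar\bU\|_{L^2}^2 + 4\nu_r\|\bar\bW\|_{L^2}^2 - 4\nu_r\int_\dom \Curl\bar\bU\cdot\bar\bW\,dx = \nu_r\|\Curl\bar\bU-2\bar\bW\|_{L^2}^2$ (using $\|\Grad\bar\bU\|_{L^2}=\|\Curl\bar\bU\|_{L^2}$ for divergence-free fields with Dirichlet data), producing a nonnegative dissipation term on the left; your argument still closes, but the precise mechanism is this exact rearrangement rather than an estimate.
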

\begin{proof}
	The regularity follows as in the proof of \cite[Theorem III.3.2]{Temam2001}: From the equations~\eqref{eq:simplified2d} and Lemma III.3.4 in~\cite{Temam2001}, it follows that $\partial_{t}\bU, \partial_{t}\bW\in L^2(0,T;H^{-1}(\dom))$. Then~\cite[Lemma III.1.2]{Temam2001} implies that $\bU$ and $\bW$ are almost everywhere equal to continuous functions from $[0,T]$ to $\hdiv$ and $L^2(\dom)$ respectively, that is, $\bU\in C([0,T];\hdiv)$, $\bW\in C([0,T];L^2(\dom))$. Moreover,
	\begin{equation}\label{eq:magic}
	\frac{d}{dt}\norm{\bU(t)}_{L^2}^2 = 2\langle \partial_{t}\bU,\bU\rangle,\quad \frac{d}{dt}\norm{\bW(t)}_{L^2}^2 = 2\langle \partial_{t}\bW,\bW\rangle,
	\end{equation}
	where $\langle\cdot,\cdot,\rangle$ denotes the duality pairing between $H^1_0$ and its dual $H^{-1}$. 
	We assume that $(\bU_1,\bW_1)$ and $(\bU_2,\bW_2)$ are two solutions of~\eqref{eq:simplified2d}. Then the difference $(\bU,\bW):=(\bU_1-\bU_2,\bW_1-\bW_2)$ satisfies
	\begin{align}\label{eq:diffeq}
	\begin{split}
	\bU_t +(\bU_1\cdot\Grad) \bU_1-(\bU_2\cdot\Grad) \bU_2- (\nu+\nu_r)\Delta \bU+\Grad {P} &= 2 \nu_r \Curl \bW ,\\
	\Div \bU &=0,\\
	\bW_t + (\bU_1\cdot \Grad)\bW_1-(\bU_2\cdot \Grad)\bW_2 - c_1\Delta \bW -c_2 \Grad\Div\bW +4 \nu_r \bW &= 2\nu_r \Curl \bU,
	\end{split}
	\end{align}
	with initial condition $\bU(0,\cdot)=0$ and $\bW(0,\cdot)=0$ and $P:=\widetilde{P}_1-\widetilde{P}_2$. 
	We take the a.e. in $t$ the inner product of these equations with $\bU$ and $\bW$ respectively and use~\eqref{eq:magic} to obtain
	\begin{multline*}
	\frac{d}{dt}\left(\norm{\bU(t)}_{L^2}^2+\norm{\bW(t)}_{L^2}^2\right)+2\nu\norm{\Grad\bU}_{L^2}^2+2 c_1\norm{\Grad\bW}_{L^2}^2+2c_2\norm{\Div\bW}_{L^2}^2+2\nu_r\norm{\Curl\bU-2\bW}_{L^2}^2\\
	=-2\int_{\dom}\left[(\bU_1\cdot\Grad)\bU_1-(\bU_2\cdot\Grad)\bU_2\right]\cdot\bU\, dx -2\int_{\dom}\left[(\bU_1\cdot\Grad)\bW_1-(\bU_2\cdot\Grad)\bW_2\right]\cdot\bW\, dx\\
	=-2\int_{\dom}\left[(\bU\cdot\Grad)\bU_2\right]\cdot\bU\, dx -2\int_{\dom}\left[(\bU\cdot\Grad)\bW_2\right]\cdot\bW\, dx.
	\end{multline*}
	We bound the two terms on the right hand side using Ladyshenskaya's inequality (see e.g. Lemma~III.3.3 in~\cite{Temam2001}) and Young's inequality:
	\begin{align*}
	\left|2\int_{\dom}\left[(\bU\cdot\Grad)\bU_2\right]\cdot\bU\, dx\right|&\leq C\norm{\bU}_{L^4}^2\norm{\Grad\bU_2}_{L^2}\\
	&\leq C \norm{\bU}_{L^2}\norm{\Grad\bU}_{L^2}\norm{\Grad\bU_2}_{L^2}\\
	&\leq \frac{\nu}{2}\norm{\Grad\bU}_{L^2}^2+\frac{C}{\nu}\norm{\bU}_{L^2}^2\norm{\Grad\bU_2}_{L^2}^2.
	\end{align*}
	Similarly,
	\begin{align*}
	\left|2\int_{\dom}\left[(\bU\cdot\Grad)\bW_2\right]\cdot\bW\, dx\right|&\leq C\norm{\bW}_{L^4}\norm{\bU}_{L^4}\norm{\Grad\bW_2}_{L^2}\\
	&\leq C \norm{\bU}_{L^2}^{1/2}\norm{\Grad\bU}_{L^2}^{1/2}\norm{\bW}_{L^2}^{1/2}\norm{\Grad\bW}_{L^2}^{1/2}\norm{\Grad\bW_2}_{L^2}\\
	&\leq \frac{\nu}{2}\norm{\Grad\bU}_{L^2}^2+\frac{C}{\nu}\norm{\bU}_{L^2}^2\norm{\Grad\bW_2}_{L^2}^2+c_1\norm{\Grad\bW}_{L^2}^2+\frac{C}{c_1}\norm{\bW}_{L^2}^2\norm{\Grad\bW_2}_{L^2}^2,
	\end{align*}
	and hence
	\begin{multline*}
	\frac{d}{dt}\left(\norm{\bU(t)}_{L^2}^2+\norm{\bW(t)}_{L^2}^2\right)+\nu\norm{\Grad\bU}_{L^2}^2+ c_1\norm{\Grad\bW}_{L^2}^2+2c_2\norm{\Div\bW}_{L^2}^2+2\nu_r\norm{\Curl\bU-2\bW}_{L^2}^2\\
	\leq \frac{C}{\nu}\norm{\bU}_{L^2}^2\left(\norm{\Grad\bU_2}_{L^2}^2+
	\norm{\Grad\bW_2}_{L^2}^2\right)+\frac{C}{c_1}\norm{\bW}_{L^2}^2\norm{\Grad\bW_2}_{L^2}^2.
	\end{multline*}
	Using that $\norm{\Grad\bU_2}_{L^2}^2, \norm{\Grad\bW_2}_{L^2}^2\in L^1([0,T])$ and then applying Gr\"{o}nwall's inequality, we obtain
	\begin{equation*}
	\norm{\bU(t)}_{L^2}^2+\norm{\bW(t)}_{L^2}^2\leq \left(\norm{\bU(0)}_{L^2}^2+\norm{\bW(0)}_{L^2}^2\right)\exp\left(C\int_0^t  (\norm{\Grad\bU_2(s)}_{L^2}^2+ \norm{\Grad\bW_2(s)}_{L^2}^2) ds \right),
	\end{equation*}
	and hence since $\bU(0)=\bW(0)=0$, that $\bU(t)=\bW(t)=0$.
\end{proof}
To show the regularity of the solutions, we will consider a Galerkin approximations of the functions $\bU$ and $\bW$, show uniform estimates in terms of the number of basis functions and then pass to the limit in the approximation to see the same holds for the limiting functions. Therefore, let $\{a_j\}_{j=1}^\infty\subset\hdiv$ be a smooth basis of orthogonal eigenfunctions of the Stokes operator with eigenvalues $\{\lambda_j\}_{j=1}^\infty$, satisfying $\dots\geq \lambda_{j+1}\geq \lambda_j\geq \lambda_{j-1}\geq \dots\geq \lambda_1> 0$ and
\begin{equation}\label{eq:orthogonal1}
\int_{\dom}a_i\cdot a_j dx =\delta_{ij},\quad \nu\int_{\dom}\Grad a_i:\Grad a_j dx =\lambda_i \delta_{ij},
\end{equation}
where $\delta_{ij}$ is the Kronecker delta. In addition, let $\{d_j\}_{j=1}^\infty$ be a smooth basis of orthogonal eigenfunctions of the operator $(c_1\Delta+c_2\Grad\Div)$ in $H^1_0(\dom)$ satisfying
\begin{equation}\label{eq:orthogonal2}
\int_{\dom}d_i\cdot d_j dx =\delta_{ij}, \quad \int_{\dom}\left(c_1\Grad d_i:\Grad d_j +c_2\Div d_i\Div d_j\right)dx = \xi_i\delta_{ij},
\end{equation}
where $\{\xi_j\}_{j=1}^\infty$ are the eigenvalues of the operator.
We denote by $\bUN$ and $\bWN$ the functions
\begin{equation*}
\bUN=\sum_{j=1}^N\ajn(t) a_j,\quad \bWN = \sum_{j=1}^N\bjn(t) d_j,
\end{equation*} 
where $\ajn$ and $\bjn$ satisfy the equations, for $j=1,\dots,N$,
\begin{equation}\label{eq:galerkinapprox}
\begin{split}
&\frac{d}{dt}\int_{\dom}\bUN\cdot a_j dx +\int_{\dom}(\bUN\cdot\Grad)\bUN\cdot a_j dx +(\nu+\nu_r)\int_{\dom}\Grad\bUN:\Grad a_j dx 
= 2\nu_r\int_{\dom}\Curl\bWN \cdot a_j dx\\
&\frac{d}{dt}\int_{\dom}\bWN\cdot d_j dx + \int_{\dom}(\bUN\cdot\Grad )\bWN \cdot d_j dx +c_1\int_{\dom}\Grad \bWN:\Grad d_j dx + c_2\int_{\dom}\Div\bWN\Div d_j dx  \\
& \qquad=-4\nu_r\int_{\dom}\bWN\cdot d_ j dx+ 2\nu_r \int_{\dom}\Curl\bUN\cdot d_j dx.
\end{split}
\end{equation}
Using~\eqref{eq:orthogonal1} and~\eqref{eq:orthogonal2}, this can be simplified to
\begin{align}
\begin{split}\label{eq:odesys1}
&\frac{d}{dt}\ajn(t) +\sum_{k,\ell=1}^N\alpha_{k,N}(t)\alpha_{\ell,N}(t)\int_{\dom}(a_k\cdot\Grad)a_{\ell}\cdot a_j dx +\left(1+\frac{\nu_r}{\nu}\right)\lambda_j \ajn(t) \\
&\qquad = 2\nu_r\sum_{k=1}^N\beta_{k,N}(t)\int_{\dom}\Curl d_k \cdot a_j dx
\end{split}\\
\begin{split}\label{eq:odesys2}
&\frac{d}{dt}\bjn(t) + \sum_{k,\ell=1}^N\alpha_{k,N}(t)\beta_{\ell,N}(t)\int_{\dom}(a_k\cdot\Grad )d_\ell \cdot d_j dx +\xi_j \bjn(t) +4\nu_r \bjn(t)	\\
& \qquad= 2\nu_r \sum_{k=1}^N \alpha_{k,N}(t) \int_{\dom}\Curl a_k\cdot d_j dx.
\end{split}
\end{align} 
Equations~\eqref{eq:odesys1}--\eqref{eq:odesys2} is a nonlinear, locally Lipschitz, system of ODEs with initial data
\begin{equation*}
\alpha_{j,N}^0 =\int_{\dom}\bU_0 \cdot a_j dx,\quad \beta_{j,N}^0 = \int_{\dom}\bW_0\cdot d_j dx,
\end{equation*} 
and therefore a solution exists on some time interval $[0,t_N]$. The existence on the full time interval $[0,T]$ follows as in the case of the Navier-Stokes equations by deriving a uniform in time bound on the $\ajn$ and $\bjn$ which results in an energy inequality for $\bUN$ and $\bWN$ and yields $\bUN,\bWN\in L^\infty(0,T;L^2(\dom))\cap L^2(0,T;H^1_0(\dom))$ uniformly in $N$. Passing to the limit $N\to\infty$ and using the uniform apriori estimates, one concludes existence of limiting functions $\bU$ and $\bW$ solving~\eqref{eq:simplified2d}.

The regularity follows from the following lemmas:
\begin{lemma}
	\label{lem:timeregularity2d}
	Let the initial data $\bU_0$ and $\bW_0\in H^2(\dom)$ and $\Div\bU_0=0$. Then 
	\begin{equation*}
	\partial_{t}\bU,\partial_t\bW\in L^\infty(0,T;L^2(\dom))\cap L^2(0,T;H^1(\dom)).
	\end{equation*}
\end{lemma}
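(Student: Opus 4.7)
My plan is to carry out a time-derivative energy estimate at the level of the Galerkin approximation $(\bUN,\bWN)$ of~\eqref{eq:galerkinapprox}, establish uniform bounds in $N$ on $\partial_t\bUN$ and $\partial_t\bWN$ in $L^\infty(0,T;L^2(\dom))\cap L^2(0,T;H^1(\dom))$, and then pass to the limit $N\to\infty$ by weak-$*$ compactness and lower semicontinuity of norms. Formally the argument amounts to differentiating~\eqref{seq:U2d} and~\eqref{seq:W2d} in time, writing $\bU':=\partial_t\bU$ and $\bW':=\partial_t\bW$, and testing the differentiated equations with $\bU'$ and $\bW'$ respectively. The convective terms $\int_\dom(\bU\cdot\Grad)\bU'\cdot\bU'\,dx$ and $\int_\dom(\bU\cdot\Grad)\bW'\cdot\bW'\,dx$ vanish by $\Div\bU=0$, leaving a principal identity whose left-hand side is $\tfrac{1}{2}\tfrac{d}{dt}(\|\bU'\|_{L^2}^2+\|\bW'\|_{L^2}^2)$ plus the dissipative quadratic forms $\nu\|\Grad\bU'\|_{L^2}^2$, $c_1\|\Grad\bW'\|_{L^2}^2$, $c_2\|\Div\bW'\|_{L^2}^2$ and $2\nu_r\|\Curl\bU'-2\bW'\|_{L^2}^2$, balanced against the two genuinely nonlinear source terms $-\int_\dom(\bU'\cdot\Grad)\bU\cdot\bU'\,dx$ and $-\int_\dom(\bU'\cdot\Grad)\bW\cdot\bW'\,dx$ that arise when time-differentiating the advective nonlinearities.

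These two terms are estimated exactly as in the uniqueness proof above, using Ladyzhenskaya's inequality in two space dimensions and Young's inequality:
\begin{equation*}
\Bigl|\int_\dom(\bU'\cdot\Grad)\bU\cdot\bU'\,dx\Bigr|\le C\|\bU'\|_{L^2}\|\Grad\bU'\|_{L^2}\|\Grad\bU\|_{L^2}\le \tfrac{\nu}{2}\|\Grad\bU'\|_{L^2}^2+C\|\bU'\|_{L^2}^2\|\Grad\bU\|_{L^2}^2,
\end{equation*}
\begin{equation*}
\Bigl|\int_\dom(\bU'\cdot\Grad)\bW\cdot\bW'\,dx\Bigr|\le \tfrac{\nu}{4}\|\Grad\bU'\|_{L^2}^2+\tfrac{c_1}{2}\|\Grad\bW'\|_{L^2}^2+C\bigl(\|\bU'\|_{L^2}^2+\|\bW'\|_{L^2}^2\bigr)\|\Grad\bW\|_{L^2}^2,
\end{equation*}
while the linear cross-terms $2\nu_r\int_\dom\Curl\bW'\cdot\bU'\,dx$ and $2\nu_r\int_\dom\Curl\bU'\cdot\bW'\,dx$ are absorbed into the dissipative quadratic forms by Young. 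Summing the $\bU'$ and $\bW'$ identities and recalling that $\|\Grad\bU\|_{L^2}^2+\|\Grad\bW\|_{L^2}^2\in L^1(0,T)$ from the basic energy inequality for $(\bU,\bW)$, I arrive at
\begin{equation*}
\tfrac{d}{dt}\bigl(\|\bU'\|_{L^2}^2+\|\bW'\|_{L^2}^2\bigr)+c\bigl(\|\Grad\bU'\|_{L^2}^2+\|\Grad\bW'\|_{L^2}^2\bigr)\le g(t)\bigl(\|\bU'\|_{L^2}^2+\|\bW'\|_{L^2}^2\bigr)
\end{equation*}
for some $c>0$ and $g\in L^1(0,T)$, and Gr\"onwall together with a subsequent time integration gives both desired bounds.

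The main obstacle is controlling the initial datum $\|\partial_t\bUN(0)\|_{L^2}+\|\partial_t\bWN(0)\|_{L^2}$ uniformly in $N$; this is exactly where the hypothesis $\bU_0,\bW_0\in H^2(\dom)$ enters. Evaluating~\eqref{seq:U2d} and~\eqref{seq:W2d} at $t=0$ and applying the Leray projection $\Leray$ to eliminate the pressure yields
\begin{equation*}
\bU'(0)=\Leray\bigl(-(\bU_0\cdot\Grad)\bU_0+(\nu+\nu_r)\Delta\bU_0+2\nu_r\Curl\bW_0\bigr),
\end{equation*}
together with an analogous formula for $\bW'(0)$. In two space dimensions the Sobolev embedding $H^2(\dom)\hookrightarrow L^\infty(\dom)$ puts the nonlinear term $(\bU_0\cdot\Grad)\bU_0$ in $L^2(\dom)$, while the linear terms lie in $L^2(\dom)$ by $\bU_0,\bW_0\in H^2(\dom)$; hence $\bU'(0),\bW'(0)\in L^2(\dom)$ with a bound depending only on $\|\bU_0\|_{H^2}$ and $\|\bW_0\|_{H^2}$. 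Projecting onto $\mathrm{span}\{a_1,\dots,a_N\}$ and $\mathrm{span}\{d_1,\dots,d_N\}$ preserves this $L^2$ bound uniformly in $N$, closing the Gr\"onwall estimate; passing $N\to\infty$ via weak-$*$ compactness then delivers Lemma~\ref{lem:timeregularity2d}.
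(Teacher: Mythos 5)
Your proof is correct and follows essentially the same route as the paper: a time-derivative energy estimate at the Galerkin level, with the initial datum $\|\partial_t\bUN(0)\|_{L^2}+\|\partial_t\bWN(0)\|_{L^2}$ controlled via the $H^2$ regularity of the initial data, the nonlinear source terms bounded by Ladyzhenskaya and Young, and a Gr\"onwall closure followed by passage to the limit $N\to\infty$. The only cosmetic difference is that the paper completes the square on the linear cross-terms to recover the nonpositive contribution $-\nu_r\|\Curl\partial_t\bUN-2\partial_t\bWN\|_{L^2}^2$ rather than absorbing them by Young, and bounds the initial time-derivative directly from the projected Galerkin equations rather than via the Leray projection on the continuous level; both yield the same uniform bounds.
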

\begin{proof}
	In the Galerkin formulation~\eqref{eq:galerkinapprox}, we multiply by $\ajn'(t)$ and $\bjn'(t)$, add the resulting equations and sum over $j$:
	\begin{equation*}
	\begin{split}
	&\norm{\partial_{t}\bUN(t)}_{L^2}^2+\norm{\partial_{t}\bWN(t)}_{L^2}^2=(\nu+\nu_r)\int_{\dom}\Delta\bUN\partial_t\bUN+c_1\int_{\dom}\Delta\bWN\partial_t\bWN dx\\
	&\qquad +c_2\int_{\dom}\Grad\Div\bW\partial_t\bWN
	-\int_{\dom}(\bUN\cdot\Grad)\bUN\cdot \partial_{t}\bUN dx- \int_{\dom}(\bUN\cdot\Grad )\bWN \cdot \partial_{t}\bWN dx \\
	&\qquad\hphantom{}+ 2\nu_r\int_{\dom}\Curl\bWN \cdot \partial_{t}\bUN dx
	-4\nu_r\int_{\dom}\bWN\partial_t\bWN dx + 2\nu_r \int_{\dom}\Curl\bUN\cdot \partial_{t}\bWN dx.
	\end{split}
	\end{equation*}
	We estimate the right hand side at $t=0$:
	\begin{align*}
	&\norm{\partial_{t}\bUN(0)}_{L^2}^2+\norm{\partial_{t}\bWN(0)}_{L^2}^2\leq
	C\norm{\Delta\bUN^0}_{L^2}\norm{\partial_t\bUN(0)}_{L^2}+ C\norm{\Grad^2\bWN^0}_{L^2}\norm{\partial_t\bWN(0)}_{L^2}\\
	&\qquad +C\norm{\bUN^0}_{L^4}\norm{\Grad\bUN^0}_{L^4}\norm{\partial_t\bUN(0)}_{L^2} +C\norm{\bUN^0}_{L^4}\norm{\Grad\bWN^0}_{L^4}\norm{\partial_t\bWN(0)}_{L^2}\\
	&\qquad +C \norm{\Curl\bWN^0}_{L^2}\norm{\partial_t\bUN(0)}_{L^2}+C\norm{\bWN^0}_{L^2}\norm{\partial_t\bWN(0)}_{L^2}+C\norm{\Curl\bUN^0}_{L^2}\norm{\partial_t\bWN(0)}_{L^2}.
	\end{align*}
	Hence, using Ladyshenskaya's inequality, and $\norm{\bUN^0}_{H^2}\leq C\norm{\bU_0}_{H^2}$, $\norm{\bWN^0}_{H^2}\leq C\norm{\bW_0}_{H^2}$, we obtain
	\begin{equation}\label{eq:initmischt}
	\norm{\partial_{t}\bUN(0)}_{L^2}+\norm{\partial_{t}\bWN(0)}_{L^2}\leq
	C\norm{\bUN^0}_{H^2}^2 + C\norm{\bWN^0}_{H^2}^2\leq C(\norm{\bU_0}_{H^2}^2 +\norm{\bW_0}_{H^2}^2).
	\end{equation}
	Therefore $\partial_{t}\bUN(0),\partial_t\bWN(0)\in L^2(\dom)$ uniformly in $N$.	
	
	Next, we differentiate the equations~\eqref{eq:galerkinapprox} in time and use $\partial_t\bUN$ and $\partial_t\bWN$ respectively as test functions:
	\begin{align*}
	&\frac{1}{2}\frac{d}{dt}\left(\norm{\partial_t\bUN}^2_{L^2}+\norm{\partial_t\bWN}^2_{L^2}\right)  +\nu\norm{\Grad\partial_t\bUN}^2_{L^2}+c_1\norm{\Grad\partial_t\bWN}_{L^2}^2+c_2\norm{\Div\partial_t\bWN}_{L^2}^2 \\
	&\qquad =2\nu_r\int_{\dom}\Curl\partial_t\bWN \cdot \partial_t\bUN dx-\nu_r\norm{\Curl\partial_t\bUN}^2_{L^2} -4\nu_r\norm{\partial_t\bWN}_{L^2}^2\\
	&\qquad \hphantom{=}-\int_{\dom}(\partial_t\bUN\cdot\Grad)\bUN\cdot \partial_t\bUN dx-\int_{\dom}(\bUN\cdot\Grad)\partial_t\bUN\cdot \partial_t\bUN dx\\
	& \qquad\hphantom{=}- \int_{\dom}(\partial_t\bUN\cdot\Grad )\bWN \cdot \partial_t\bWN dx- \int_{\dom}(\bUN\cdot\Grad )\partial_t\bWN \cdot \partial_t\bWN dx
	\\
	&\qquad\hphantom{=}+ 2\nu_r \int_{\dom}\Curl\partial_t\bUN\cdot\partial_t\bWN dx\\ 
	& \qquad =-\nu_r\norm{\Curl\partial_t\bUN-2\partial_t\bWN}_{L^2}^2 -\int_{\dom}(\partial_t\bUN\cdot\Grad)\bUN\cdot \partial_t\bUN dx\\
	& \qquad\hphantom{=}- \int_{\dom}(\partial_t\bUN\cdot\Grad )\bWN \cdot \partial_t\bWN dx.
	\end{align*}	
	The first term on the right hand side is nonpositive, so it remains to bound the other two terms. We use again Ladyshenskaya's and Young's inequality:
	\begin{align*}
	\left|\int_{\dom}(\partial_t\bUN\cdot\Grad)\bUN\cdot \partial_t\bUN dx\right|&\leq \norm{ \partial_t\bUN}_{L^4}^2\norm{\Grad\bUN}_{L^2}\\
	&\leq \norm{\Grad \partial_t\bUN}_{L^2}\norm{\partial_t\bUN}_{L^2}\norm{\Grad\bUN}_{L^2}\\
	&\leq \frac{\nu}{4}\norm{\Grad\partial_t\bUN}_{L^2}^2+\frac{C}{\nu}\norm{\partial_t\bUN}^2_{L^2}\norm{\Grad\bUN}_{L^2}^2,
	\end{align*}
	and similarly,
	\begin{align*}
	\left|\int_{\dom}(\partial_t\bUN\cdot\Grad)\bWN\cdot \partial_t\bWN dx\right|&\leq \norm{ \partial_t\bWN}_{L^4}\norm{\partial_t\bUN}_{L^4}\norm{\Grad\bWN}_{L^2}\\
	&\leq \norm{\Grad \partial_t\bUN}_{L^2}^{1/2}\norm{\partial_t\bUN}_{L^2}^{1/2}\norm{\Grad \partial_t\bWN}^{1/2}_{L^2}\norm{\partial_t\bWN}_{L^2}^{1/2}\norm{\Grad\bWN}_{L^2}\\
	&\leq C \norm{\Grad \partial_t\bUN}_{L^2}\norm{\partial_t\bUN}_{L^2}\norm{\Grad\bWN}_{L^2}\\
	&\qquad +C\norm{\Grad \partial_t\bWN}_{L^2}\norm{\partial_t\bWN}_{L^2}\norm{\Grad\bWN}_{L^2}\\
	&\leq \frac{\nu}{4}\norm{\Grad\partial_t\bUN}_{L^2}^2+\frac{C}{\nu}\norm{\partial_t\bUN}^2_{L^2}\norm{\Grad\bWN}_{L^2}^2\\
	&\qquad+\frac{c_1}{2}\norm{\Grad\partial_t\bWN}_{L^2}^2+\frac{C}{c_1}\norm{\partial_t\bWN}^2_{L^2}\norm{\Grad\bWN}_{L^2}^2.
	\end{align*}
	Therefore,
	\begin{align*}
	&\frac{1}{2}\frac{d}{dt}\left(\norm{\partial_t\bUN}^2_{L^2}+\norm{\partial_t\bWN}^2_{L^2}\right)  +\frac{\nu}{2}\norm{\Grad\partial_t\bUN}^2_{L^2}+\frac{c_1}{2}\norm{\Grad\partial_t\bWN}_{L^2}^2\\
	&\quad\leq \frac{C}{\nu}\norm{\partial_t\bUN}^2_{L^2}\left(\norm{\Grad\bUN}_{L^2}^2+\norm{\Grad\bWN}_{L^2}^2\right) + \frac{C}{c_1}\norm{\partial_t\bWN}^2_{L^2}\norm{\Grad\bWN}_{L^2}^2\\
	&\quad\leq C_{\nu,c_1}\left(\norm{\partial_t\bUN}^2_{L^2}+\norm{\partial_t\bWN}^2_{L^2}\right)\left(\norm{\Grad\bUN}_{L^2}^2+\norm{\Grad\bWN}_{L^2}^2\right) .
	\end{align*}	
	Since $\norm{\Grad\bUN}_{L^2}^2+\norm{\Grad\bWN}_{L^2}^2\in L^1([0,T])$, we can use Gr\"{o}nwall's inequality together with~\eqref{eq:initmischt} to conclude that
	\begin{equation*}
	\sup_{t\in[0,T]}\left(\norm{\partial_t\bUN}^2_{L^2}+\norm{\partial_t\bWN}^2_{L^2}\right) + \int_0^T\left(\norm{\Grad\partial_t\bUN}^2_{L^2}+\norm{\Grad\partial_t\bWN}_{L^2}^2\right) dt\leq C.
	\end{equation*}	
	This bound holds uniformly in $N$, and hence after passing $N\to \infty$ also for $\bU$ and $\bW$.	
\end{proof}	
Using this, we can prove the following lemma:
\begin{lemma}
	\label{lem:spaceregularity2d_1}
	Let the initial data $\bU_0$ and $\bW_0\in H^2(\dom)$ and $\Div\bU_0=0$ and assume that the domain $\dom$ is at least of class $C^2$. Then 
	\begin{equation*}
	\bU,\bW\in L^\infty(0,T;H^2(\dom)).
	\end{equation*}
\end{lemma}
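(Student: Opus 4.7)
The plan is to proceed in two steps. Since Lemma~\ref{lem:timeregularity2d} already provides $\partial_t\bU,\partial_t\bW\in L^\infty(0,T;L^2(\dom))$, the key observation is that viewing~\eqref{eq:simplified2d} as a stationary elliptic system at each fixed $t$ reduces the problem to elliptic regularity, provided the right-hand side is under control in $L^2(\dom)$ uniformly in $t$. The main obstruction is the quadratic convective term $(\bU\cdot\Grad)\bU$, whose $L^2$ norm in two dimensions can be estimated only by interpolation between $H^1$ and $H^2$ norms; hence I first need an intermediate bound $\bU,\bW\in L^\infty(0,T;H^1(\dom))\cap L^2(0,T;H^2(\dom))$, and only then can I close the elliptic estimate.

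\textbf{Step 1 (intermediate regularity).} Working at the Galerkin level, I multiply the first equation in~\eqref{eq:galerkinapprox} by $\lambda_j\alpha_{j,N}(t)$ and the second by $\xi_j\beta_{j,N}(t)$, then sum over $j=1,\ldots,N$. Using the orthogonality relations~\eqref{eq:orthogonal1}--\eqref{eq:orthogonal2} and the fact that the $a_j$ are eigenfunctions of the Stokes operator (so the pressure gradient contribution drops out), this is equivalent to testing the equations with $-\Leray\Delta\bUN$ and $(-c_1\Delta-c_2\Grad\Div)\bWN$ respectively. Summing the two identities gives
\begin{equation*}
\tfrac{1}{2}\tfrac{d}{dt}\bigl(\norm{\Grad\bUN}_{L^2}^2+\norm{\Grad\bWN}_{L^2}^2\bigr)+(\nu+\nu_r)\norm{\Delta\bUN}_{L^2}^2+c_1\norm{\Delta\bWN}_{L^2}^2+c_2\norm{\Grad\Div\bWN}_{L^2}^2\leq R_1+R_2+R_3,
\end{equation*}
where $R_1$ collects the $\Curl$ coupling (linear, easily absorbed by Young's inequality) and $R_2,R_3$ are the nonlinear convective contributions. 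By Ladyshenskaya's inequality in two dimensions,
\begin{align*}
|R_2|&\leq C\norm{\bUN}_{L^2}^{1/2}\norm{\Grad\bUN}_{L^2}\norm{\Delta\bUN}_{L^2}^{3/2},\\
|R_3|&\leq C\norm{\bUN}_{L^2}^{1/2}\norm{\Grad\bUN}_{L^2}^{1/2}\norm{\Grad\bWN}_{L^2}\norm{\Delta\bWN}_{L^2},
\end{align*}
and Young's inequality absorbs the second-order factors into the dissipation, leaving source terms bounded by $C\norm{\Grad\bUN}_{L^2}^4+C\norm{\Grad\bUN}_{L^2}^2\norm{\Grad\bWN}_{L^2}^2$. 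Since $\norm{\Grad\bUN}_{L^2}^2,\norm{\Grad\bWN}_{L^2}^2\in L^1(0,T)$ by the basic energy estimate, Gr\"onwall's inequality yields the uniform bound $\bUN,\bWN\in L^\infty(0,T;H^1(\dom))\cap L^2(0,T;H^2(\dom))$. Passing $N\to\infty$ propagates this regularity to $\bU$ and $\bW$.

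\textbf{Step 2 (upgrade to $L^\infty H^2$).} For a.e. $t\in(0,T)$, I rewrite~\eqref{seq:U2d} as the stationary Stokes problem
\begin{equation*}
-(\nu+\nu_r)\Delta\bU+\Grad\widetilde{P}=\mathbf{F}(t),\qquad \Div\bU=0,\qquad \bU|_{\partial\dom}=0,
\end{equation*}
with $\mathbf{F}:=-\partial_t\bU-(\bU\cdot\Grad)\bU+2\nu_r\Curl\bW$. Standard Stokes regularity on the $C^2$ domain $\dom$ gives $\norm{\bU(t)}_{H^2}\leq C\norm{\mathbf{F}(t)}_{L^2}$. By Lemma~\ref{lem:timeregularity2d}, $\partial_t\bU\in L^\infty(0,T;L^2)$; by Step 1, $\Curl\bW\in L^\infty(0,T;L^2)$; and Ladyshenskaya combined with Young's inequality yields
\begin{equation*}
\norm{(\bU\cdot\Grad)\bU}_{L^2}\leq C\norm{\bU}_{L^2}^{1/2}\norm{\Grad\bU}_{L^2}\norm{\bU}_{H^2}^{1/2}\leq \tfrac{1}{2C}\norm{\bU}_{H^2}+C'\norm{\bU}_{L^2}\norm{\Grad\bU}_{L^2}^2,
\end{equation*}
the first term being absorbed on the left. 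Since the remainder lies in $L^\infty(0,T)$ by Step 1, I conclude $\bU\in L^\infty(0,T;H^2(\dom))$. An analogous argument applied to~\eqref{seq:W2d}, viewed as $-c_1\Delta\bW-c_2\Grad\Div\bW=\mathbf{G}(t)$ with Dirichlet boundary conditions, yields $\bW\in L^\infty(0,T;H^2(\dom))$; here the convective term is bounded by $\norm{(\bU\cdot\Grad)\bW}_{L^2}\leq C\norm{\bU}_{L^2}^{1/2}\norm{\Grad\bU}_{L^2}^{1/2}\norm{\Grad\bW}_{L^2}^{1/2}\norm{\bW}_{H^2}^{1/2}$ and absorbed analogously.

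The main difficulty lies in Step 1: the quadratic convective terms are critical in the two-dimensional $H^1$-energy estimate and close only thanks to the particular interpolation inequalities of Ladyshenskaya, with just the right powers of the dissipation available for absorption. The coupling between $\bU$ and $\bW$ through the $\Curl$ terms is lower order and poses no serious obstacle.
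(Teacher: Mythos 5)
Your argument is correct and shares the core mechanism with the paper: view the system at each fixed $t$ as a stationary elliptic (Stokes and Lam\'e--type) problem, feed in the $L^\infty(0,T;L^2)$ bound on $\partial_t\bU,\partial_t\bW$ from Lemma~\ref{lem:timeregularity2d}, and invoke elliptic regularity. The differences are in the treatment of the convective nonlinearity. The paper claims $g_1,g_2\in L^\infty(0,T;L^{4/3}(\dom))$ (citing Temam's calculations (3.102), which in effect supply the intermediate $L^\infty(0,T;H^1)\cap L^2(0,T;H^2)$ bound your Step~1 re-derives explicitly at the Galerkin level) and then bootstraps: $W^{2,4/3}\hookrightarrow L^\infty$, upgrading $g_1,g_2$ to $L^\infty(0,T;L^2)$, and finally to $H^2$. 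You avoid the $W^{2,4/3}\hookrightarrow L^\infty$ detour by interpolating $\norm{(\bU\cdot\Grad)\bU}_{L^2}\lesssim\norm{\bU}_{L^2}^{1/2}\norm{\Grad\bU}_{L^2}\norm{\bU}_{H^2}^{1/2}$ and absorbing the subcritical power of $\norm{\bU}_{H^2}$ into the left side of the elliptic estimate, which is a tidier one-step closure once $L^\infty H^1$ is in hand. Both proofs thus rest on the same intermediate regularity; yours makes it explicit and trades the Sobolev-embedding bootstrap for a direct Young absorption. One minor imprecision: in your Step~1 the estimate for $R_3$ should carry the powers $\norm{\Grad\bWN}_{L^2}^{1/2}\norm{\Delta\bWN}_{L^2}^{3/2}$ rather than first powers of each, coming from $\norm{\Grad\bWN}_{L^4}\lesssim\norm{\Grad\bWN}_{L^2}^{1/2}\norm{\Grad^2\bWN}_{L^2}^{1/2}$; this does not affect the absorption or the Gr\"onwall step.
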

\begin{proof}
	We write equations~\eqref{eq:simplified2d} in the variational form
	\begin{align}
	\label{eq:bilins}
	\begin{split}
	(\nu+\nu_r)\int_{\dom}\Grad\bU(t):\Grad\psi_1 dx & = \int_{\dom} g_1(t)\psi_1\, dx,\\
	\int_{\dom}\left(c_1\Grad\bW(t):\Grad\psi_2+ c_2\Div\bW(t)\Div\psi_2 \right) dx +4\nu_r\int_{\dom}\bW(t)\cdot\psi_2 dx & = \int_{\dom} g_2(t) \psi_2 dx,
	\end{split}
	\end{align}
	for any $\psi_1\in\V$, $\psi_2\in H^1_0(\dom)$, where $g_1, g_2$ are given by
	\begin{align*}
	g_1(t) &= 2\nu_r\Curl\bW(t) - \partial_t\bU(t)-(\bU(t)\cdot\Grad)\bU(t),\\
	g_2(t) &= 2\nu_r\Curl\bU(t) - \partial_t\bW(t)-(\bU(t)\cdot\Grad)\bW(t).
	\end{align*}
	The left hand sides in~\eqref{eq:bilins} are elliptic bilinear forms, and the right hand sides satisfy, using Lemma~\ref{lem:timeregularity2d} and the calculations in equations (3.102) in~\cite[Chapter III]{Temam2001},
	\begin{align*}
	&\partial_t\bU,\partial_t\bW\in L^\infty(0,T;L^2(\dom))\cap L^2(0,T;H^1(\dom)),\\
	&\Curl\bU,\Curl\bW\in L^\infty(0,T;L^2(\dom))\cap H^1(0,T;L^2(\dom)),\\
	& (\bU\cdot\Grad)\bU,(\bU\cdot\Grad)\bW\in L^\infty(0,T;L^{4/3}(\dom)),
	\end{align*}
	and hence $g_1,g_2\in L^\infty(0,T;L^{4/3}(\dom))$. Using Proposition I.2.2 in~\cite{Temam2001} for $\bU$ and elliptic regularity theory for $\bW$, we obtain that
	$\bU,\bW\in L^\infty(0,T;W^{2,4/3}(\dom))$. Hence by the Sobolev embedding, we obtain that $\bU,\bW\in L^\infty([0,T]\times\dom)$. As in the proof of Theorem III.3.6 in~\cite{Temam2001}, we can use this to obtain a better estimate on the convection terms and get $g_1,g_2\in L^\infty(0,T;L^2(\dom))$ and hence $\bU,\bW\in L^\infty(0,T;H^2(\dom))$. 
\end{proof}	
Higher order regularity follows from iterating this procedure and assuming that $\bU_0,\bW_0$ are regular enough:
\begin{lemma}\label{lem:higherorderregularity}
	Assume $\bU_0,\bW_0\in H^3(\dom)$ with $\Div\bU_0=0$ and that the domain $\dom$ is sufficiently smooth. Then
	\begin{equation*}
	\partial_t\bU,\partial_t\bW\in L^\infty(0,T;H^1(\dom)).
	\end{equation*}
\end{lemma}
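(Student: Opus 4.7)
The approach mirrors the step-up from Lemma~\ref{lem:timeregularity2d} to Lemma~\ref{lem:spaceregularity2d_1}: I would differentiate the Galerkin equations~\eqref{eq:galerkinapprox} once more in time, derive uniform energy estimates for $\partial_{tt}\bUN, \partial_{tt}\bWN$ at the Galerkin level, pass to the limit $N\to\infty$, and finally invoke elliptic regularity to promote the $L^2$-in-time bound on $\partial_t\bU, \partial_t\bW$ provided by Lemma~\ref{lem:timeregularity2d} to the desired $L^\infty(0,T;H^1(\dom))$ bound.

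First I would bound $\partial_{tt}\bUN(0)$ and $\partial_{tt}\bWN(0)$ in an appropriate norm (ideally $L^2$, possibly $H^{-1}$) in terms of $\norm{\bU_0}_{H^3}$ and $\norm{\bW_0}_{H^3}$. Differentiating~\eqref{eq:galerkinapprox} once in time and evaluating at $t=0$ expresses $\partial_{tt}\bUN(0), \partial_{tt}\bWN(0)$ as linear combinations of $\Delta\partial_t\bUN(0), \partial_t\bUN(0), \Curl\partial_t\bWN(0)$ and quadratic transport terms; the Galerkin equations at $t=0$ give $\partial_t\bUN(0)$ and $\partial_t\bWN(0)$ in terms of the Stokes/Laplace operator applied to $\bUN^0,\bWN^0$, so these quantities are controlled by $\norm{\bU_0}_{H^3}, \norm{\bW_0}_{H^3}$ uniformly in $N$, following the template of~\eqref{eq:initmischt}.

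Next, differentiating~\eqref{eq:galerkinapprox} twice in time and testing with $\partial_{tt}\bUN$ and $\partial_{tt}\bWN$ respectively yields an identity of the form
\begin{equation*}
\frac{1}{2}\frac{d}{dt}\left(\norm{\partial_{tt}\bUN}_{L^2}^2 + \norm{\partial_{tt}\bWN}_{L^2}^2\right) + \nu\norm{\Grad\partial_{tt}\bUN}_{L^2}^2 + c_1\norm{\Grad\partial_{tt}\bWN}_{L^2}^2 + \cdots = \mathrm{NL},
\end{equation*}
where $\mathrm{NL}$ collects nonlinear contributions of the type $\int(\partial_{tt}\bUN\cdot\Grad)\bUN\cdot\partial_{tt}\bUN\,dx$, $\int(\partial_t\bUN\cdot\Grad)\partial_t\bUN\cdot\partial_{tt}\bUN\,dx$, and the analogous terms coming from the $\bW$-equation. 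Each is bounded via Ladyshenskaya's inequality in 2D, Young's inequality and the a priori bounds from Lemmas~\ref{lem:timeregularity2d} and~\ref{lem:spaceregularity2d_1} (so that $\bUN,\bWN\in L^\infty(0,T;H^2)$ and $\partial_t\bUN,\partial_t\bWN\in L^\infty(0,T;L^2)\cap L^2(0,T;H^1)$). A Gronwall argument then yields $\partial_{tt}\bUN, \partial_{tt}\bWN \in L^\infty(0,T;L^2)\cap L^2(0,T;H^1)$ uniformly in $N$, which survives the limit $N\to\infty$.

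Finally, I would view the time-differentiated equations~\eqref{seq:U2d}--\eqref{seq:W2d} as a stationary (in time) Stokes-type system for $\partial_t\bU$ and an elliptic system for $\partial_t\bW$ with data
\begin{align*}
g_1 &:= 2\nu_r\Curl\partial_t\bW - \partial_{tt}\bU - \partial_t[(\bU\cdot\Grad)\bU],\\
g_2 &:= 2\nu_r\Curl\partial_t\bU - 4\nu_r\partial_t\bW - \partial_{tt}\bW - \partial_t[(\bU\cdot\Grad)\bW],
\end{align*}
and argue as in Lemma~\ref{lem:spaceregularity2d_1}. Combining the regularity of $\partial_{tt}\bU,\partial_{tt}\bW$ just obtained with the $H^2$-regularity of $\bU,\bW$ and the 2D Sobolev embedding $H^2\hookrightarrow L^\infty$ controls the transport contributions, yielding $g_1,g_2\in L^\infty(0,T;L^2(\dom))$; applying Stokes and elliptic regularity concludes $\partial_t\bU,\partial_t\bW\in L^\infty(0,T;H^2(\dom))$, which is stronger than the stated claim. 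The main obstacle is the initial-data step: tracking how $\bU_0,\bW_0\in H^3$ propagates through two time differentiations of a nonlinear coupled system to give uniform-in-$N$ control of $\partial_{tt}\bUN(0),\partial_{tt}\bWN(0)$ is delicate, and the coupling terms mixing $\partial_{tt}\bU$ with $\partial_{tt}\bW$ through the curl operators require careful balancing of exponents in Ladyshenskaya's inequality to avoid losing regularity.
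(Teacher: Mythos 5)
Your strategy is to differentiate the Galerkin system \emph{twice} in time, test with $\partial_{tt}\bUN,\partial_{tt}\bWN$, and feed the resulting $L^\infty_t L^2_x$ bound on $\partial_{tt}$ into an elliptic regularity argument. This is a genuinely different route from the paper, which differentiates only \emph{once} in time and then tests with $-\Delta\partial_t\bUN$ (equivalently, multiplies the time-differentiated Galerkin equations by the eigenvalues $\lambda_j,\xi_j$ and then by $\alpha'_{j,N},\beta'_{j,N}$), producing the inequality~\eqref{eq:faen4} for $\norm{\Grad\partial_t\bUN}_{L^2}^2+\norm{\Grad\partial_t\bWN}_{L^2}^2$ directly, and then applying Gr\"onwall. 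The two strategies are not equivalent in terms of required initial regularity.

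The gap is exactly the step you flag as delicate, and it is not merely delicate: it fails under the stated hypotheses. To run Gr\"onwall on $\norm{\partial_{tt}\bUN}_{L^2}^2+\norm{\partial_{tt}\bWN}_{L^2}^2$ you need $\partial_{tt}\bUN(0),\partial_{tt}\bWN(0)\in L^2(\dom)$ \emph{uniformly in $N$}. But from the Galerkin equation $\partial_t\bUN=(\nu+\nu_r)\projU\Delta\bUN+2\nu_r\projU\Curl\bWN-\projU[(\bUN\cdot\Grad)\bUN]$, time-differentiating once and evaluating at $t=0$ gives
\begin{equation*}
\partial_{tt}\bUN(0)=(\nu+\nu_r)\projU\Delta\partial_t\bUN(0)+\cdots,
\end{equation*}
and since $\partial_t\bUN(0)=(\nu+\nu_r)\projU\Delta\bUN^0+\cdots$, the leading contribution to $\partial_{tt}\bUN(0)$ is $(\nu+\nu_r)^2\Delta^2\bUN^0$. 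In the spectral norms associated with the Stokes eigenbasis, $\norm{\Delta^2\bUN^0}_{L^2}^2\sim\sum_{j\le N}\lambda_j^4|(\bU_0,a_j)|^2$, which is bounded by $\norm{\bU_0}_{H^4}^2$ but can blow up like $\lambda_N\norm{\bU_0}_{H^3}^2$ for $\bU_0\in H^3\setminus H^4$. So the uniform $L^2$ bound on $\partial_{tt}\bUN(0)$ from $H^3$ data is simply false. Falling back to $H^{-1}$ control of $\partial_{tt}\bUN(0)$ does not help either: testing the twice-differentiated equation with $\partial_{tt}\bUN$ still produces $\tfrac{d}{dt}\norm{\partial_{tt}\bUN}_{L^2}^2$ on the left, so Gr\"onwall cannot be started. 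Your concluding elliptic regularity step compounds the problem: to get $g_1\in L^\infty_tL^2_x$ you would also need $\partial_t\bU\in L^\infty_tL^4_x$, which is again essentially what you are trying to prove. (The promised conclusion $\partial_t\bU\in L^\infty(0,T;H^2)$, one full derivative stronger than what is stated, should itself be a warning sign that the hypotheses are being overspent.)

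The paper's argument sidesteps all of this by being exactly one derivative cheaper at $t=0$: it never touches $\partial_{tt}\bUN(0)$, but only $\Grad\partial_t\bUN(0)$. The leading term there is $(\nu+\nu_r)\Grad\Delta\bUN^0$, which is bounded by $\norm{\bUN^0}_{H^3}\le C\norm{\bU_0}_{H^3}$ (projection stability on the eigenbasis), as in~\eqref{eq:initmischt} and the displayed estimates following it. This is the key observation you would need to replicate; the fix is to replace "differentiate twice, test with $\partial_{tt}$" by "differentiate once, test with $-\Delta\partial_t$" (or by the eigenvalue multiplication the paper uses), which delivers $\Grad\partial_t\bUN\in L^\infty_tL^2_x$ directly with only $H^3$ initial data.
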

\begin{proof}
	From Lemma~\ref{lem:timeregularity2d} and~\ref{lem:spaceregularity2d_1}, we have that
	\begin{equation*}
	\bU,\bW\in L^\infty(0,T;H^2(\dom)), \quad\partial_t\bU,\partial_{t}\bW\in L^\infty(0,T;L^2(\dom))\cap L^2(0,T;H^1(\dom)).
	\end{equation*}
	We consider again the Galerkin approximation~\eqref{eq:galerkinapprox}. We denote $\projU$ the projection onto the linear space spanned by the eigenfunctions $\{a_1,\dots,a_N\}$ and by $\projW$ the projection onto the space spanned by the eigenfunctions $\{d_1,\dots,d_N\}$. 
	The projections satisfy
	\begin{equation}\label{eq:projectionestimate}
	\begin{split}
	\norm{\projU v}_{L^2(\dom)}^2\leq \norm{v}_{L^2(\dom)}^2,&\quad \norm{\Grad\projU v}_{L^2(\dom)}^2\leq \norm{\Grad v}_{L^2(\dom)}^2\\
	\norm{\projW v}_{L^2(\dom)}^2\leq \norm{v}_{L^2(\dom)}^2,&\quad \norm{\Grad\projW v}_{L^2(\dom)}^2+\norm{\Div\projW v}_{L^2(\dom)}^2\leq \norm{\Grad v}_{L^2(\dom)}^2+\norm{\Div v}_{L^2(\dom)}^2.
	\end{split}
	\end{equation}
	Then the Galerkin formulation~\eqref{eq:galerkinapprox} can be written as
	\begin{equation*}
	\begin{split}
	\partial_t \bUN &= -\projU\left((\bUN\cdot\Grad)\bUN\right)+(\nu+\nu_r)\projU\Delta\bUN+2\nu_r\projU\Curl\bWN,\\
	\partial_t \bWN &= -\projW\left((\bUN\cdot\Grad)\bWN\right)+c_1\projW\Delta\bWN+c_2\projW\Grad\Div\bWN-4\nu_r\bWN +2\nu_r\projW\Curl\bUN.	
	\end{split}
	\end{equation*}
	and hence taking the gradient in both equations and estimating the $L^2$-norms at $t=0$, using~\eqref{eq:projectionestimate}, we obtain
	\begin{align*}
	\norm{\Grad\partial_t \bUN(0)}_{L^2(\dom)}&\leq \norm{\Grad\projU\left((\bUN^0\cdot\Grad)\bUN^0\right)}_{L^2}+(\nu+\nu_r)\norm{\Grad\projU\Delta\bUN^0}_{L^2}+2\nu_r\norm{\Grad\projU\Curl\bWN^0}_{L^2}\\
	&\leq \norm{\Grad\left((\bUN^0\cdot\Grad)\bUN^0\right)}_{L^2}+(\nu+\nu_r)\norm{\Grad\Delta\bUN^0}_{L^2}+2\nu_r\norm{\Grad\Curl\bWN^0}_{L^2}\\
	&\leq \norm{\Grad\bUN^0}^2_{L^4}+\norm{\bUN^0}_{L^\infty}\norm{\Grad^2\bUN}_{L^2} +(\nu+\nu_r)\norm{\Grad^3\bUN^0}_{L^2}+2\nu_r\norm{\Grad^2\bWN^0}_{L^2}\\
	&\leq C \left(\norm{\bUN^0}_{H^3}+\norm{\bWN^0}_{H^2}\right)\leq C\left(\norm{\bU_0}_{H^3}+\norm{\bW_0}_{H^2}\right),
	\end{align*}
	and similarly,	
	\begin{align*}
	\norm{\Grad\partial_t \bWN(0)}_{L^2(\dom)}&\leq \norm{\Grad\projW\left((\bUN^0\cdot\Grad)\bWN^0\right)}_{L^2}+c_1\norm{\Grad\projW\Delta\bWN^0}_{L^2}+c_2\norm{\Grad\projW\Grad\Div\bWN^0}\\
	&\quad+2\nu_r\norm{\Grad\projW\Curl\bUN^0}_{L^2}+4\nu_r\norm{\Grad\bWN^0}_{L^2}\\
	&\leq
	C\norm{\Grad\left((\bUN^0\cdot\Grad)\bWN^0\right)}_{L^2}+C\norm{\Grad\Delta\bWN^0}_{L^2}+C\norm{\Grad\Grad\Div\bWN^0}_{L^2}\\
	&\quad+C\norm{\Grad\Curl\bUN^0}_{L^2}+4\nu_r\norm{\Grad\bWN^0}_{L^2}\\
	&\leq C\norm{\Grad\bUN^0}_{L^4}\norm{\Grad\bWN^0}_{L^4} + C\norm{\bUN^0}_{L^\infty}\norm{\Grad^2\bWN^0}_{L^2}+C\norm{\Grad^3\bWN^0}_{L^2}\\
	&\quad+C\norm{\Grad^2\bUN^0}_{L^2}+4\nu_r\norm{\Grad\bWN^0}_{L^2}\\
	&\leq C\left(\norm{\bU_0}_{H^2}+\norm{\bW_0}_{H^3}\right),
	\end{align*}
	and hence
	\begin{equation*}
	\norm{\Grad\partial_t\bUN(0)}_{L^2}+\norm{\Grad\partial_t\bWN(0)}_{L^2}\leq C\left(\norm{\bU_0}_{H^3}+\norm{\bW_0}_{H^3}\right).
	\end{equation*}
	Next we multiply the Galerkin formulations~\eqref{eq:galerkinapprox} by the eigenvalues $\lambda_j$ and $\xi_j$ respectively and differentiate in time:
	\begin{equation*}
	\begin{split}
	&\frac{d}{dt}\int_{\dom}\partial_t\bUN\cdot \Delta a_j dx +\int_{\dom}(\partial_t\bUN\cdot\Grad)\bUN\cdot \Delta a_j dx+\int_{\dom}(\bUN\cdot\Grad)\partial_t\bUN\cdot \Delta a_j dx\\
	&\quad =(\nu+\nu_r)\int_{\dom}\Delta\partial_t\bUN \cdot \Delta a_j dx 
	+ 2\nu_r\int_{\dom}\Curl\partial_t\bWN \cdot \Delta a_j dx,\\
	&\frac{d}{dt}\int_{\dom}\partial_t\bWN\cdot (c_1\Delta +c_2\Grad\Div)d_j dx + \int_{\dom}(\partial_t\bUN\cdot\Grad )\bWN \cdot (c_1\Delta +c_2\Grad\Div)d_j dx \\
	&\qquad + \int_{\dom}(\bUN\cdot\Grad )\partial_t\bWN \cdot (c_1\Delta +c_2\Grad\Div) d_j dx\\
	&\quad =c_1\int_{\dom}\Delta \partial_t \bWN\cdot (c_1\Delta +c_2\Grad\Div) d_j dx + c_2\int_{\dom}\Grad\Div\partial_t\bWN \cdot (c_1\Delta +c_2\Grad\Div) d_j dx  \\
	& \qquad-4\nu_r\int_{\dom}\partial_t\bWN\cdot (c_1\Delta +c_2\Grad\Div) d_ j dx+ 2\nu_r \int_{\dom}\Curl\partial_t\bUN\cdot (c_1\Delta +c_2\Grad\Div) d_j dx.
	\end{split}
	\end{equation*}
	Then we multiply by $\alpha_{j,N}'$ and $\beta_{j,N}'$ respectively and sum over $j=1,\dots, N$,
	\begin{equation*}
	\begin{split}
	&\frac{d}{dt}\int_{\dom}|\partial_t\Grad\bUN|^2 dx -\int_{\dom}(\partial_t\bUN\cdot\Grad)\bUN\cdot \Delta \partial_t\bUN dx-\int_{\dom}(\bUN\cdot\Grad)\partial_t\bUN\cdot \Delta \partial_t\bUN dx\\
	&\quad =-(\nu+\nu_r)\int_{\dom}|\Delta\partial_t\bUN|^2 dx 
	- 2\nu_r\int_{\dom}\Curl\partial_t\bWN \cdot \Delta \partial_t\bUN dx,\\
	&\frac{d}{dt}\int_{\dom}\left(c_1|\partial_t\Grad\bWN|^2+c_2|\partial_t\Div\bWN|^2\right) dx - \int_{\dom}(\partial_t\bUN\cdot\Grad )\bWN \cdot (c_1\Delta +c_2\Grad\Div) \partial_t\bWN  dx \\
	&\qquad - \int_{\dom}(\bUN\cdot\Grad )\partial_t\bWN \cdot (c_1\Delta +c_2\Grad\Div) \partial_t\bWN  dx+\int_{\dom}| (c_1\Delta +c_2\Grad\Div) \partial_t\bWN|^2  dx\\
	&\quad =-4\nu_r\int_{\dom}\left(c_1|\partial_t\Grad\bWN|^2+c_2|\Div\partial_t\bWN|^2\right) dx- 2\nu_r \int_{\dom}\Curl\partial_t\bUN\cdot (c_1\Delta +c_2\Grad\Div) \partial_t\bWN  dx.
	\end{split}
	\end{equation*}
	Adding the two equations and rearranging, we get
	\begin{equation}\label{eq:faen3}
	\begin{split}
	&\frac{1}{2}\frac{d}{dt}\left(\norm{\Grad\partial_{t}\bUN}_{L^2}^2+c_1\norm{\Grad\partial_{t}\bWN}_{L^2}^2+c_2\norm{\Div\partial_t\bWN}_{L^2}^2\right) +(\nu+\nu_r)\norm{\Delta\partial_t\bUN}_{L^2}^2 \\
	&\qquad\hphantom{=}  +\norm{(c_1\Delta +c_2\Grad\Div) \partial_t\bWN}_{L^2}^2 +4c_1\nu_r\norm{\Grad\partial_t\bWN}_{L^2}^2+4c_2\nu_r\norm{\Div\partial_t\bWN}_{L^2}^2\\
	&\qquad =  \int_{\dom}[(\partial_t\bUN\cdot\Grad)\bUN+(\bUN\cdot\Grad)\partial_t\bUN]\cdot\Delta\partial_{t}\bUN dx\\
	&\qquad \hphantom{=}
	+\int_{\dom}[(\partial_t\bUN\cdot\Grad )\bWN+(\bUN\cdot\Grad )\partial_t\bWN]\cdot (c_1\Delta +c_2\Grad\Div) \partial_t\bWN dx\\
	&\qquad \hphantom{=}- 2\nu_r \int_{\dom}\Curl\partial_t\bUN\cdot (c_1\Delta +c_2\Grad\Div) \partial_t\bWN  dx	- 2\nu_r\int_{\dom}\Curl\partial_t\bWN \cdot \Delta \partial_t\bUN dx. 
	\end{split}
	\end{equation}
	The terms on the right hand side can be estimated as follows (using Ladyshenskaya's inequality):
	\begin{align*}
	&\left|\int_{\dom}[(\partial_t\bUN\cdot\Grad )\bWN]\cdot(c_1\Delta+c_2\Grad\Div) \partial_t\bWN dx\right|\leq \norm{\partial_t\bUN}_{L^4}\norm{\Grad\bWN}_{L^4}\norm{(c_1\Delta+c_2\Grad\Div)\bWN}_{L^2}\\
	&\qquad\leq
	\frac{1}{4} \norm{(c_1\Delta+c_2\Grad\Div)\bWN}_{L^2}^2 + C \norm{\partial_t\bUN}_{L^4}^2\norm{\Grad\bWN}_{L^4}^2\\
	&\qquad\leq
	\frac{1}{4} \norm{(c_1\Delta+c_2\Grad\Div)\bWN}_{L^2}^2 + C \norm{\partial_t\bUN}_{L^2}\norm{\partial_t\Grad\bUN}_{L^2}\norm{\Grad\bWN}_{L^2}\norm{\Grad^2\bWN}_{L^2}\\
	&\qquad\leq
	\frac{1}{4} \norm{(c_1\Delta+c_2\Grad\Div)\bWN}_{L^2}^2 + C \norm{\partial_t\bUN}_{L^2}^2+ C\norm{\partial_t\Grad\bUN}_{L^2}^2\norm{\Grad\bWN}_{L^2}^2\norm{\Grad^2\bWN}_{L^2}^2.
	\end{align*}
	Similarly,
	\begin{align*}
	&\left|\int_{\dom}[(\bUN\cdot\Grad )\partial_t\bWN]\cdot(c_1\Delta+c_2\Grad\Div) \partial_t\bWN dx\right|\leq \norm{\bUN}_{L^\infty}\norm{\partial_t\Grad\bWN}_{L^2}\norm{(c_1\Delta+c_2\Grad\Div)\bWN}_{L^2}\\
	&\qquad\leq
	\frac{1}{4} \norm{(c_1\Delta+c_2\Grad\Div)\bWN}_{L^2}^2 + C \norm{\bUN}_{L^\infty}^2\norm{\Grad\partial_t\bWN}_{L^2}^2.
	\end{align*}
	and replacing $\bWN$ by $\bUN$, also
	\begin{align*}
	&\left|\int_{\dom}[(\partial_t\bUN\cdot\Grad )\bUN]\cdot\Delta \partial_t\bUN dx\right|\leq \norm{\partial_t\bUN}_{L^4}\norm{\Grad\bUN}_{L^4}\norm{\Delta\partial_t\bUN}_{L^2}\\
	&\qquad\leq
	\frac{\nu}{4}\norm{\Delta\partial_t\bUN}_{L^2}^2+\frac{C}{\nu}\norm{\partial_t\bUN}_{L^4}^2\norm{\Grad\bUN}_{L^4}^2\\
	&\qquad\leq
	\frac{\nu}{4}\norm{\Delta\partial_t\bUN}_{L^2}^2+\frac{C}{\nu}\norm{\partial_t\bUN}_{L^2}\norm{\partial_t\Grad\bUN}_{L^2}\norm{\Grad\bUN}_{L^2}\norm{\Grad^2\bUN}_{L^2}\\
	&\qquad\leq
	\frac{\nu}{4}\norm{\Delta\partial_t\bUN}_{L^2}^2+\frac{C}{\nu}\norm{\partial_t\bUN}_{L^2}^2+\frac{C}{\nu}\norm{\partial_t\Grad\bUN}_{L^2}^2\norm{\Grad\bUN}_{L^2}^2\norm{\Grad^2\bUN}_{L^2}^2.
	\end{align*}
	and
	\begin{align*}
	&\left|\int_{\dom}[(\bUN\cdot\Grad )\partial_t\bUN]\cdot\Delta \partial_t\bUN dx\right|\leq \norm{\bUN}_{L^\infty}\norm{\Delta\partial_t\bUN}_{L^2}\norm{\Grad\partial_t\bUN}_{L^2}\\
	&\qquad\leq \frac{\nu}{4}\norm{\Delta\partial_t\bUN}_{L^2}^2+\frac{C}{\nu}\norm{\Grad\partial_t\bUN}_{L^2}^{2}\norm{\bUN}_{L^\infty}^2.
	\end{align*}
	The remaining two terms can be estimated as
	\begin{equation*}
	2\nu_r\left| \int_{\dom}\Curl\partial_t\bUN\cdot (c_1\Delta +c_2\Grad\Div) \partial_t\bWN  dx \right|\leq \frac{1}{4}\norm{(c_1\Delta +c_2\Grad\Div) \partial_t\bWN}_{L^2}^2 +C\nu_r^2\norm{\Grad\partial_t\bUN}_{L^2}^2
	\end{equation*}
	and
	\begin{equation*}
	2\nu_r\left|\int_{\dom}\Curl\partial_t\bWN \cdot \Delta \partial_t\bUN dx\right|\leq\frac{\nu}{4}\norm{\Delta\partial_t\bUN}_{L^2}^2+\frac{C\nu_r^2}{\nu}\norm{\Grad\partial_t\bWN}_{L^2}^2
	\end{equation*}	
	Hence, we can upper bound in equation~\eqref{eq:faen3}
	\begin{equation}\label{eq:faen4}
	\begin{split}
	&\frac{1}{2}\frac{d}{dt}\left(\norm{\Grad\partial_{t}\bUN}_{L^2}^2+\norm{\Grad\partial_{t}\bWN}_{L^2}^2\right) +\frac{\nu}{4}\norm{\Delta\partial_t\bUN}_{L^2}^2+\frac{1}{4}\norm{(c_1\Delta+c_2\Grad\Div)\partial_t\bWN}_{L^2}^2 \\
	&\qquad\leq
	C\norm{\partial_t\Grad\bUN}_{L^2}^2\norm{\Grad\bWN}_{L^2}^2\norm{\Grad^2\bWN}_{L^2}^2\\
	&\qquad \hphantom{\leq} + C \norm{\partial_t\bUN}_{L^2}^2+ C\norm{\partial_t\Grad\bUN}_{L^2}^2\norm{\Grad\bWN}_{L^2}^2\norm{\Grad^2\bWN}_{L^2}^2\\
	&\qquad\hphantom{\leq}+  C \norm{\bUN}_{L^\infty}^2\norm{\Grad\partial_t\bWN}_{L^2}^2 +\frac{C}{\nu}\norm{\partial_t\bUN}_{L^2}^2+\frac{C}{\nu}\norm{\partial_t\Grad\bUN}_{L^2}^2\norm{\Grad\bUN}_{L^2}^2\norm{\Grad^2\bUN}_{L^2}^2\\
	&\qquad\hphantom{\leq}+\frac{C}{\nu}\norm{\Grad\partial_t\bUN}_{L^2}^{2}\norm{\bUN}_{L^\infty}^2+C\nu_r^2\norm{\Grad\partial_t\bUN}_{L^2}^2+\frac{C\nu_r^2}{\nu}\norm{\Grad\partial_t\bWN}_{L^2}^2.
	\end{split}
	\end{equation}
	Thanks to Lemmas~\ref{lem:timeregularity2d} and~\ref{lem:spaceregularity2d_1}, the right hand side can be bounded uniformly in $t\in [0,T]$ by
	\begin{equation*}
	C_{\nu,\nu_r,c_1} \left(\norm{\Grad\partial_t\bUN}_{L^2}^{2}+\norm{\Grad\partial_t\bWN}_{L^2}^{2}\right) +C_{\nu,\nu_r,c_1},
	\end{equation*}
	and an application of the Gr\"{o}nwall inequality and letting $N\to\infty$ yields the result.	
\end{proof}	
\begin{lemma}
	\label{lem:higherregularityspace2d}
	Assume $\bU_0,\bW_0\in H^3(\dom)$ with $\Div\bU_0=0$. Then
	\begin{equation*}
	\bU,\bW\in  L^\infty(0,T;H^3(\dom)).	
	\end{equation*}	
\end{lemma}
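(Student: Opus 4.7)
The plan is to reproduce the argument of Lemma~\ref{lem:spaceregularity2d_1}, but one rung higher on the regularity ladder, by viewing~\eqref{eq:simplified2d} as a pair of elliptic problems in space with forcing terms whose $H^1$-regularity we now have from the preceding lemmas.

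\textbf{Step 1: set up the stationary elliptic problems.} As in~\eqref{eq:bilins}, for a.e.\ $t \in [0,T]$ we view $\bU(t)$ and $\bW(t)$ as solutions to the elliptic variational problems
\begin{align*}
(\nu+\nu_r)\!\int_{\dom}\!\Grad\bU(t)\!:\!\Grad\psi_1\, dx &= \int_{\dom} g_1(t)\cdot\psi_1\, dx, \qquad \psi_1 \in \V, \\
\int_{\dom}\!\!\left(c_1\Grad\bW(t)\!:\!\Grad\psi_2 + c_2\Div\bW(t)\,\Div\psi_2 + 4\nu_r \bW(t)\!\cdot\!\psi_2\right) dx &= \int_{\dom} g_2(t)\cdot\psi_2\, dx, \qquad \psi_2 \in H^1_0(\dom),
\end{align*}
with
\[
g_1 = 2\nu_r\Curl\bW - \partial_t\bU - (\bU\cdot\Grad)\bU, \qquad g_2 = 2\nu_r\Curl\bU - \partial_t\bW - (\bU\cdot\Grad)\bW.
\]

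\textbf{Step 2: show $g_1, g_2 \in L^\infty(0,T; H^1(\dom))$.} From Lemma~\ref{lem:higherorderregularity} we have $\partial_t\bU,\partial_t\bW \in L^\infty(0,T; H^1(\dom))$. From Lemma~\ref{lem:spaceregularity2d_1} we have $\bU,\bW \in L^\infty(0,T; H^2(\dom))$, so $\Curl\bU, \Curl\bW \in L^\infty(0,T; H^1(\dom))$. It remains to estimate the convective terms in $H^1$. For this, in $d=2$ the Sobolev embedding $H^2(\dom) \hookrightarrow W^{1,\infty}(\dom)$ yields
\[
\norm{(\bU\cdot\Grad)\bU}_{H^1} \leq C\,\norm{\bU}_{W^{1,\infty}}\norm{\bU}_{H^2} + C\,\norm{\bU}_{L^\infty}\norm{\Grad^2 \bU}_{L^2} \leq C\,\norm{\bU}_{H^2}^2,
\]
and similarly $\norm{(\bU\cdot\Grad)\bW}_{H^1} \leq C\,\norm{\bU}_{H^2}\norm{\bW}_{H^2}$. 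Thus $g_1,g_2 \in L^\infty(0,T; H^1(\dom))$.

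\textbf{Step 3: elliptic regularity.} Assuming $\dom$ is sufficiently smooth (at least of class $C^3$), we invoke the $H^3$-regularity theory for the stationary Stokes problem (see e.g.\ Proposition~I.2.3 in~\cite{Temam2001}) for $\bU$, which gives $\bU \in L^\infty(0,T; H^3(\dom))$ with $\norm{\bU(t)}_{H^3} \leq C\,\norm{g_1(t)}_{H^1}$. For $\bW$, the bilinear form defines a symmetric strongly elliptic system with homogeneous Dirichlet data, and standard elliptic regularity (see e.g.\ \cite[Theorem 6.33]{Folland2001} applied componentwise together with the coercivity of $c_1 \Delta + c_2 \Grad\Div$ on $H^1_0$) yields $\bW \in L^\infty(0,T; H^3(\dom))$ with $\norm{\bW(t)}_{H^3} \leq C\,\norm{g_2(t)}_{H^1}$.

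The argument is essentially a bootstrap and presents no serious obstacle beyond verifying the $H^1$-bound on $(\bU\cdot\Grad)\bU$, which is the place where the two-dimensionality is used via the Sobolev embedding $H^2 \hookrightarrow L^\infty$. Note that if one wished to go still higher (e.g.\ $H^4$ in space), one would first need time-regularity $\partial_t\bU,\partial_t\bW \in L^\infty(0,T; H^2(\dom))$, which would require the initial data to lie in $H^4$; the argument iterates in the obvious manner.
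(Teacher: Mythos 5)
Your approach matches the paper's proof exactly: both view~\eqref{eq:simplified2d} as a pair of elliptic problems as in~\eqref{eq:bilins}, show that the forcing terms $g_1,g_2$ lie in $L^\infty(0,T;H^1(\dom))$ using Lemmas~\ref{lem:spaceregularity2d_1} and~\ref{lem:higherorderregularity}, and then invoke elliptic regularity (the paper cites Proposition~I.2.2 in~\cite{Temam2001} for the Stokes part). The paper is terse and does not spell out the estimate on the convection terms; your Step~2 fills that in, which is the right instinct.

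However, the embedding you invoke to control the convection terms is false: in $d=2$, $H^2(\dom)$ does \emph{not} embed into $W^{1,\infty}(\dom)$ — that would require $H^1(\dom)\hookrightarrow L^\infty(\dom)$, which fails in two dimensions. The estimate $\norm{(\bU\cdot\Grad)\bU}_{H^1}\leq C\norm{\bU}_{H^2}^2$ is nevertheless correct, but the right justification splits $\Grad[(\bU\cdot\Grad)\bU]$ into the two pieces $\Grad\bU\cdot\Grad\bU$ and $\bU\cdot\Grad^2\bU$: the first is controlled in $L^2$ by $\norm{\Grad\bU}_{L^4}^2\leq C\norm{\bU}_{H^2}^2$ via the two-dimensional embedding $H^1\hookrightarrow L^4$ (Ladyzhenskaya), and the second by $\norm{\bU}_{L^\infty}\norm{\Grad^2\bU}_{L^2}\leq C\norm{\bU}_{H^2}^2$ via $H^2\hookrightarrow L^\infty$. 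With that correction the proof goes through, and your remark about the domain needing to be at least $C^3$ for the $H^3$ elliptic estimate is a reasonable point that the paper leaves implicit.
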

\begin{proof}
	We proceed as in Lemma~\ref{lem:spaceregularity2d_1} and write the equations in the form~\eqref{eq:bilins}. From Lemma~\ref{lem:spaceregularity2d_1}, we already get that
	\begin{equation*}
	\Curl\bW,\Curl\bU,(\bU\cdot\Grad)\bU,(\bU\cdot\Grad)\bW\in L^\infty(0,T;H^1(\dom)). 
	\end{equation*}
	Lemma~\ref{lem:higherorderregularity} additionally yields that 
	\begin{equation*}
	\partial_t\bU,\partial_t\bW\in L^\infty(0,T;H^1(\dom)).
	\end{equation*}
	Hence $g_1(t),g_2(t)\in H^1(\dom)$ for almost every $t$. Thus Proposition I.2.2 in~\cite{Temam2001} and elliptic regularity imply that 
	\begin{equation*}
	\bU,\bW\in L^\infty(0,T;H^3(\dom)).
	\end{equation*}	
\end{proof}
\begin{remark}
	One could iterate this further to achieve even more regularity of $\bU$ and $\bW$ under the assumption that the domain and the initial data are smooth enough.
\end{remark}
\begin{remark}
	\label{rem:lipschitz}
	Notice that this implies that $\bU,\bW\in L^\infty(0,T;\mathrm{Lip}(\dom))$. In addition, $\bU$, $\bW$, $\bH=\Grad\Phi$ satisfy an energy balance with equality:
	\begin{multline*}
	\frac{1}{2}\frac{d}{dt}\int_{\dom}\left(|\bU|^2+|\bW|^2 + \mu_0(1+\kappa_0)|\bH|^2\right) dx\\ +\int_{\dom}\left(\nu|\Grad\bU|^2 +c_1|\Grad\bW|^2 + c_2|\Div\bW|^2 + \nu_r|\Curl\bU-2\bW|^2\right)dx= \mu_0\int_{\dom}\partial_t\bh_a\bH dx.
	\end{multline*}
\end{remark}
In a very similar way, one can prove uniqueness and regularity of solutions locally in time for the three dimensional case (see also~\cite[Theorem III.3.11]{Temam2001} and \cite{Amirat2010}). We state the result here without proof:
\begin{theorem}
	\label{thm:3dregularity}
	Let $d=3$ and assume $\bU_0,\bW_0\in H^3(\dom)$ with $\Div\bU_0=0$ and that the domain $\dom$ and the boundary data $\bh_a$ are sufficiently smooth. Then there exists $T^*>0$ such that the solution of~\eqref{eq:limitsys} satisfies
	\begin{equation*}
	\partial_t\bU,\partial_t\bW,\partial_t\bH\in L^\infty(0,T^*;H^1(\dom))\cap L^2(0,T^*;H^2(\dom))\quad \text{and }\quad \bU,\bW,\bH\in L^{\infty}(0,T^*;H^3(\dom)).
	\end{equation*}

\end{theorem}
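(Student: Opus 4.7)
The plan is to adapt the Galerkin argument of Lemmas~\ref{lem:timeregularity2d}--\ref{lem:higherregularityspace2d} to three space dimensions, accepting at the outset that the a priori estimates will only close on a short time interval. As in the 2D setting, the magnetic field decouples completely: since $\bM=\kappa_0\bH$, equations~\eqref{seq:curlhL}--\eqref{seq:divbL} reduce to $\Delta\Phi=0$ in $\dom$ with the Neumann condition $\partial_n\Phi=\frac{1}{1+\kappa_0}\bh_a\cdot\normal$ and zero mean, so that elliptic regularity yields any desired smoothness of $\bH=\Grad\Phi$ and $\partial_t\bH$ from corresponding smoothness of $\bh_a$ and $\partial_t\bh_a$; in particular the claimed $L^\infty(0,T^*;H^3)$ and $L^\infty(0,T^*;H^1)\cap L^2(0,T^*;H^2)$ regularity for $\bH$ and $\partial_t\bH$ is automatic. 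The Kelvin force then simplifies to $\frac{\mu_0\kappa_0}{2}\Grad|\bH|^2$, is absorbed into the pressure, and leaves the closed micropolar-type system~\eqref{eq:simplified2d} for $(\bU,\bW)$.

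Next I would run the Galerkin scheme~\eqref{eq:galerkinapprox} with the same bases $\{a_j\},\{d_j\}$ as in the 2D case, and seek the first higher-order a priori estimate by testing the equations against $-\Delta\bUN$ and $-(c_1\Delta+c_2\Grad\Div)\bWN$ respectively. In three dimensions the critical convective contribution is controlled by H\"older's inequality together with the embedding $H^1\hookrightarrow L^6$:
\begin{equation*}
\left|\int_\dom(\bUN\cdot\Grad)\bUN\cdot\Delta\bUN\,dx\right|\leq C\norm{\bUN}_{L^6}\norm{\Grad\bUN}_{L^3}\norm{\Delta\bUN}_{L^2}\leq\tfrac{\nu}{4}\norm{\Delta\bUN}_{L^2}^2+C\norm{\Grad\bUN}_{L^2}^6,
\end{equation*}
and the coupling terms into the $\bW$-equation are bounded analogously. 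Setting $y(t):=\norm{\Grad\bUN(t)}_{L^2}^2+\norm{\Grad\bWN(t)}_{L^2}^2+\norm{\Div\bWN(t)}_{L^2}^2$, the resulting differential inequality is of Riccati type, $y'(t)\leq Cy(t)^3+C$, whose solution stays finite on a maximal interval $[0,T^*]$ determined by $y(0)$, i.e.\ by $\norm{\bU_0}_{H^1}+\norm{\bW_0}_{H^1}$. This produces an $N$-uniform bound $(\bUN,\bWN)\in L^\infty(0,T^*;H^1)\cap L^2(0,T^*;H^2)$.

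Once the $L^\infty_t H^2_x$ stage is in hand on $[0,T^*]$, the bootstrap of Lemmas~\ref{lem:timeregularity2d}, \ref{lem:higherorderregularity} and~\ref{lem:higherregularityspace2d} goes through in 3D without essential change. Differentiating~\eqref{eq:galerkinapprox} in time and testing first against $\partial_t\bUN,\partial_t\bWN$ yields $\partial_t\bU,\partial_t\bW\in L^\infty(0,T^*;L^2)\cap L^2(0,T^*;H^1)$, with the initial values $\partial_t\bUN(0),\partial_t\bWN(0)$ controlled in $L^2$ via $\bU_0,\bW_0\in H^2$; testing the time-differentiated system against $-\Delta\partial_t\bUN$ and $-(c_1\Delta+c_2\Grad\Div)\partial_t\bWN$ upgrades this to $L^\infty(0,T^*;H^1)\cap L^2(0,T^*;H^2)$, using $\bU_0,\bW_0\in H^3$ and the projection estimates~\eqref{eq:projectionestimate} to initialise $\Grad\partial_t\bUN(0),\Grad\partial_t\bWN(0)$ in $L^2$. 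Finally, recasting~\eqref{eq:simplified2d} as the stationary elliptic problems~\eqref{eq:bilins} with right-hand sides now in $L^\infty(0,T^*;H^1)$ and invoking elliptic regularity for the Stokes operator and for $(c_1\Delta+c_2\Grad\Div)$ delivers $\bU,\bW\in L^\infty(0,T^*;H^3)$. Passing to the limit $N\to\infty$ by weak-$*$ compactness, together with a uniqueness estimate on the difference of two solutions patterned on the 2D proof but with Ladyzhenskaya replaced by $\norm{v}_{L^4}\leq C\norm{v}_{L^2}^{1/4}\norm{\Grad v}_{L^2}^{3/4}$, completes the argument.

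The principal obstacle, and the sole reason for the restriction to $[0,T^*]$ rather than $[0,T]$, is the super-linear $y^3$ on the right-hand side of the Riccati-type inequality above, which is intrinsic to the 3D convective nonlinearity and makes $T^*$ depend inversely on a power of $\norm{\bU_0}_{H^1}+\norm{\bW_0}_{H^1}$. A minor technical point is to verify that $-\Delta\bUN$ and $-(c_1\Delta+c_2\Grad\Div)\bWN$ remain in the correct approximation spaces; this holds because $\{a_j\},\{d_j\}$ are eigenfunctions of precisely these self-adjoint elliptic operators, so testing against them corresponds, up to the eigenvalue factors $\lambda_j,\xi_j$, to an admissible choice in the Galerkin system.
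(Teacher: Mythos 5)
Your proposal is correct and coincides with what the paper's citations (Temam, Thm.~III.3.11, and Amirat--Hamdache) point to; the paper itself explicitly states this theorem without proof. The essential 3D modification is exactly what you isolate: testing against $A\bUN$ (the Stokes operator) and $-(c_1\Delta+c_2\Grad\Div)\bWN$, using $H^1\hookrightarrow L^6$ and Gagliardo--Nirenberg to get the Riccati-type inequality $y'\leq Cy^3+C$, which closes only on $[0,T^*]$, and then running the 2D bootstrap. One small caveat about the claim that the bootstrap "goes through without essential change": in the 3D analogue of Lemma~\ref{lem:spaceregularity2d_1}, the convective term lands in $L^\infty_t L^{3/2}$ rather than $L^\infty_t L^{4/3}$, so elliptic regularity first gives $\bU,\bW\in L^\infty_t W^{2,3/2}$, which in 3D does \emph{not} embed into $L^\infty$ (unlike $W^{2,4/3}$ in 2D); one must iterate once more, noting $\Grad\bU\in L^\infty_t W^{1,3/2}\hookrightarrow L^\infty_t L^3$ and $\bU\in L^\infty_t L^6$ so that $(\bU\cdot\Grad)\bU\in L^\infty_t L^2$, and only then conclude $\bU,\bW\in L^\infty_t H^2$ (and hence $L^\infty_{t,x}$ via $H^2(\R^3)\hookrightarrow L^\infty$). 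With that extra half-step included, your argument is complete and delivers all the stated regularity, including the decoupled harmonic potential treatment of $\bH$.
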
  
\section{Adaptions needed to prove existence of weak solutions in~\cite{Amirat2008} for chosen boundary conditions}\label{a:modifications}
The goal of this section is to explain the adaptions that have to be made in the proof of existence of weak solutions for~\eqref{eq:approxsys} to accommodate the boundary conditions
\begin{equation}
\label{eq:hbc}
\bhs\cdot\normal = (\bh_a-\bms)\cdot \normal, \quad \text{on } [0,T]\times \partial\dom, 
\end{equation}
for the magnetizing field $\bhs$ and the natural boundary conditions
\begin{equation}
\label{eq:mbc}
\Div\bms =0,\quad \Curl\bms\times\normal=0,\quad \text{on } [0,T]\times \partial\dom, 
\end{equation}
for the magnetization $\bms$ instead of 
\begin{equation*}
\bhs\cdot\normal =0,\quad  \text{on } [0,T]\times \partial\dom, 
\end{equation*}
and
\begin{equation}\label{eq:Muratbc}
\bms\cdot\normal=0,\quad \Curl\bms\times\normal =0,  \quad\text{on } [0,T]\times \partial\dom, 
\end{equation}
respectively, as in~\cite{Amirat2008}. With boundary conditions~\eqref{eq:mbc}, the solution $\bms(t)$ is sought in the space $\mathcal{K}$ as defined in~\eqref{eq:spaceform} instead of 
\begin{equation*}
\mathcal{K}_0=\{v\in L^2(\dom)\,|\,\Div v,\Curl v\in L^2(\dom), \,v\cdot\normal =0,\text{ on } \partial\dom \}=\{v\in H^1(\dom)\,|\, v\cdot\normal =0\text{ on } \partial\dom \},
\end{equation*}
as in~\cite{Amirat2008}. The reason why we need to do this, is that in the Galerkin approximation for the existence proof in~\cite[pp. 336--343]{Amirat2008}, one would like to take the approximation of $\bhs$ as a test function in the weak formulation for the approximation of $\bms$ to obtain an energy inequality. With boundary conditions~\eqref{eq:hbc}, the approximation of $\bhs$ is not in the space $\mathcal{K}_0$ and hence not a valid test function. By changing the function space for $\bms$ to $\mathcal{K}$, $\bhs$ and its approximations become a valid test function.

$\mathcal{K}$ equipped with inner product
\begin{equation*}
\langle q_1, q_2\rangle := \int_{\dom} q_1\cdot q_2\, dx +\int_{\dom} \Div q_1 \Div q_2\, dx + \int_{\dom}\Curl q_1\cdot \Curl q_2\, dx.
\end{equation*}
is a Hilbert space and has a subspace
\begin{equation*}
\mathcal{N}:=\overline{\text{span}\{v\in H^1(\dom)\,|\, v=\Grad\psi,\text{ for some } \psi\in H^2(\dom) \}}^{\mathcal{K}}\subset \mathcal{K}.
\end{equation*}
Let $\mathcal{N}^\perp$ be the orthogonal complement of this subspace. We can find a smooth orthonormal basis $\{\Grad \eta_j \}_{j=1}^\infty$ of $\mathcal{N}$ and a smooth orthonormal basis $\{\theta_j\}_{j=1}^\infty$ of its complement $\mathcal{K}^\perp$ such that $\{\Grad \eta_j \}_{j=1}^\infty\cup \{\theta_j\}_{j=1}^\infty$ is an orthonormal basis of $\mathcal{K}$.
The boundary conditions~\eqref{eq:mbc} are natural and incorporated in the weak formulation for the magnetization $\bms$ instead of the function space as in the case of the boundary condition $\bms\cdot\normal =0$. Indeed, for a smooth vector field $v$ satisfying~\eqref{eq:mbc} and a test function $q\in \mathcal{K}$, it holds
\begin{equation*}
\begin{split}
-\int_{\dom}\Delta v \cdot q dx &=\int_{\dom}\Curl v\cdot \Curl q dx +\int_{\dom}\Div v \Div q dx -\int_{\partial\dom}(\Curl v\times\normal)\cdot q dS -\int_{\partial\dom}\Div v q\cdot \normal dS \\
&=\int_{\dom}\Curl v\cdot \Curl q dx +\int_{\dom}\Div v \Div q dx,
\end{split}
\end{equation*}
and hence the correct weak formulation of~\eqref{seq:dtma} is~\eqref{eq:weakapproxm}, the same as for boundary conditions~\eqref{eq:Muratbc} except that in the case of~\eqref{eq:Muratbc}, the test functions should be taken in the space $\mathcal{K}_0$. 

In the Galerkin approximation for the existence proof (\cite[p. 337]{Amirat2008}), one then instead defines $\bms_n$ and $\bhs_n$, the approximations at level $n$, by
\begin{equation*}
\bms_n =\sum_{j=1}^n\gamma_j^n(t)\theta_j +\sum_{j=1}^n\gamma^n_{j+n}(t)\Grad\eta_j,\quad \bhs_n=\Grad\phis_n,\quad\text{with } \phis_n=\sum_{j=1}^n\delta_j^n(t)\eta_j,
\end{equation*}
where $\eta_j$ and $\theta_j$ are as in the orthonormal bases of $\mathcal{N}$ and $\mathcal{N}^\perp$ and $\gamma_j^n$ and $\delta_j^n$ are defined from the conditions ($j=1,\dots,n$)

\begin{align*}
&\frac{d}{dt}\int_{\dom}\bms_n\cdot\Grad\eta_j \,dx +\int_{\dom}[(\bus_n\cdot\Grad)\bms_n]\cdot\Grad\eta_j\, dx + \sigma\int_{\dom}\Div\bms_n\Delta\eta_j\, dx\\
&\qquad\qquad\qquad= \int_{\dom}(\bws_n\times\bms_n)\cdot\Grad\eta_j\, dx-\frac{1}{\tau}\int_{\dom}(\bms_n-\kappa_0\bhs_n)\cdot\Grad\eta_j\, dx,\\
&\frac{d}{dt}\int_{\dom}\bms_n\cdot\theta_j \,dx +\int_{\dom}[(\bus_n\cdot\Grad)\bms_n]\cdot\theta_j\, dx +
\sigma\int_{\dom}\Curl\bms_n:\Curl\theta_j\, dx + \sigma\int_{\dom}\Div\bms_n\Div\theta_j\, dx\notag\\
&\qquad\qquad\qquad= \int_{\dom}(\bws_n\times\bms_n)\cdot\theta_j\, dx-\frac{1}{\tau}\int_{\dom}(\bms_n-\kappa_0\bhs_n)\cdot\theta_j\, dx,\\
&\qquad \qquad \bms_n(0,\cdot)=\bm_{0,n};\\
&\int_{\dom}\Grad\phis_n\cdot\Grad\eta_j\, dx =-\int_{\dom}(\bms_n-\bh_a)\Grad\eta_j\, dx; 
\end{align*}
where $\bm_{0,n}$ is the orthogonal projection of $\bm_0$ in $L^2(\dom)$ onto the space spanned by $\{\Grad\eta_j \}_{j=1}^n\cup\{\theta_j\}_{j=1}^n$.

Then one proceeds as in the existence proof in~\cite{Amirat2008}. The only other difference is that since $\{\bms_n\}_n\subset \mathcal{K}$ instead of $H^1(\dom)$, one needs to use the Div-Curl Lemma (see e.g. \cite[Theorem 5.2.1]{Evans1990}) to obtain that the space $\mathcal{K}$ is compact in $L^2(\dom)$ (instead of Rellich's theorem as for the embedding $H^1(\dom)\subset\subset L^2(\dom)$) and then apply the Aubin-Lions Lemma with this space to obtain strong convergence in $L^2$ of a subsequence of $\{\bms_n\}_n$ to some limiting function $\bms$.

\smallskip

\bibliographystyle{abbrv}
\bibliography{Rosensweigbib}
\end{document}